\newcommand{\Homeo}{\mathrm{Homeo}}
\newcommand{\Act}{\mathrm{Act}}
\newcommand{\Rat}{\mathbb{Q}}
\newcommand{\Nat}{\mathbb{N}}
\newcommand{\Int}{\mathbb{Z}}
\newcommand{\FF}{\mathcal{F}}
\newcommand{\GG}{\mathcal{G}}
\newcommand{\AAA}{\mathcal{A}}
\newcommand{\NN}{\mathcal{N}}
\newcommand{\UU}{\mathcal{U}}
\newcommand{\SH}{\mathcal{S}}
\newcommand{\CC}{\mathcal{C}}
\newcommand{\QQ}{\mathcal{Q}}
\newcommand{\PP}{\mathcal{P}}
\newcommand{\RR}{\mathcal{R}}
\newcommand{\VV}{\mathcal{V}}
\newcommand{\SSS}{\mathcal{S}}
\theoremstyle{plain}
\newtheorem{theorem}{\bf Theorem}[section]
\newtheorem{lemma}[theorem]{\bf Lemma}
\newtheorem{proposition}[theorem]{\bf Proposition}
\newtheorem{corollary}[theorem]{\bf Corollary}
\newtheorem{fact}[theorem]{\bf Fact}
\newtheorem{thmx}{Theorem}
\theoremstyle{definition}
\newtheorem{definition}[theorem]{\bf Definition}
\newtheorem{example}[theorem]{\bf Example}
\newtheorem{remark}[theorem]{\bf Remark}
\newtheorem{problem}[theorem]{\bf Problem}
\newtheorem{question}[theorem]{\bf Question}
\newtheorem{construction}[theorem]{\bf Construction}
\begin{document}
	\title[Strong topological Rokhlin property]{Strong topological Rokhlin property, shadowing, and symbolic dynamics of countable groups}
	\author{Michal Doucha}
	\address{Institute of Mathematics\\
		Czech Academy of Sciences\\
		\v Zitn\'a 25\\
		115 67 Praha 1\\
		Czechia}
	\email{doucha@math.cas.cz}
	\urladdr{https://users.math.cas.cz/~doucha/}
	\keywords{countable groups, Cantor dynamics, symbolic dynamics, subshifts of finite type, generic actions, shadowing}
	\thanks{The author was supported by by the GA\v{C}R project 22-07833K and RVO: 67985840.}
	
\begin{abstract}
A countable group $G$ has the strong topological Rokhlin property (STRP) if it admits a continuous action on the Cantor space with a comeager conjugacy class. We show that having the STRP is a symbolic dynamical property. We prove that a countable group $G$ has the STRP if and only if certain sofic subshifts over $G$ are dense in the space of subshifts. A sufficient condition is that isolated shifts over $G$ are dense in the space of all subshifts. 
		
We provide numerous applications including the proof that a group that decomposes as a free product of finite or cyclic groups has the STRP. We show that finitely generated nilpotent groups do not have the STRP unless they are virtually cyclic; the same is true for many groups of the form $G_1\times G_2\times G_3$ where each factor is recursively presented. We show that a large class of non-finitely generated groups do not have the STRP, this includes any group with infinitely generated center and the Hall universal locally finite group.

We find a very strong connection between the STRP and shadowing, a.k.a. pseudo-orbit tracing property. We show that shadowing is generic for actions of a finitely generated group $G$ if and only if $G$ has the STRP.
\end{abstract}

\dedicatory{To the memory of my father Franti\v sek Doucha (1952-2022)}
	
\maketitle

\setcounter{tocdepth}{1}
\tableofcontents

\section{Introduction}
This paper presents and explores certain connections between generic group actions on the Cantor space and the structure of subshifts of finite type and sofic subshifts over these groups.

Let us start with some history and motivation. One of the earliest occurencies of `genericity results' in measurable and topological dynamics was Halmos' paper \cite{Halm}, following the previous result of Oxtoby and Ulam in \cite{OxUl}, where he showed that ergodicity and weak mixing are generic properties among p.m.p. bijections of the standard probability space, removing the fears that such useful properties might actually be rare among general p.m.p. bijections (see \cite{AlpPra02} for the connections betwen Halmos's and Oxtoby-Ulam's papers). He also showed that the conjugacy class of any aperiodic p.m.p. bijection is dense, a result which is also attributed to Rokhlin and often identified with the Rokhlin lemma. This also explains why the name `Rokhlin' is attached to results of this kind, we refer to the survey \cite{GlaWei08}. We refer to monographs \cite{AlpPra} and \cite{AkHurKen} for more historical background and many results of this sort and to \cite{Hoch08} for some more recent developments in the topological case.

It is our aim here to investigate these problems, but not only for single invertible transformations, or in the language that we shall use, for actions of the group of integers, but for general countable group actions. This connects this area of research with combinatorial and geometric group theory and reveals sometimes surprising and beautiful differences between dynamical properties of geometrically different groups. We continue with more examples.

In topological dynamical category, Glasner and Weiss showed in \cite{GlaWei01} that there is an action of the integers on the Cantor space with a dense conjugacy class. This has been extended to actions of $\Int^d$ by Hochman \cite{Hoch12} and his proof works for any countable group, so density of conjugacy classes is not an interesting phenomenon in this case although interesting differences occur if we require a dense conjugacy class that is computable, see again \cite{Hoch12}. Glasner, Thouvenot, and Weiss in \cite{GlaThouWei} also showed that for any countable group $G$, a generic p.m.p. action has a dense conjugacy class (this is also an unpublished and indepedently proved result of Hjorth).

It is then of particular interest whether in a given setting or given category there is an action which itself is generic, meaning its conjugacy class is comeager, thereby reducing the investigation of generic properties in that category to the properties of that particular action. This is known to be false for the integer actions on the standard probability space by del Junco \cite{delJu} and actually for actions of all countable amenable groups by Foreman and Weiss \cite{ForWei}. It was therefore of surprise when Kechris and Rosendal showed in \cite{KeRo} that there is a generic action of the integers on the Cantor space. This result sparked such an interest that it has been since then re-proved several times by different authors, see e.g. \cite{AkGlWei}, \cite{BeDa}, \cite{Kwia}.

Another remarkable result of Hochman \cite{Hoch12} came few years later showing that this fails for $\Int^d$, for $d\geq 2$, i.e. these groups do not admit a generic action on the Cantor space. Hochman also attached the adjective `strong topological Rokhlin' to countable groups, as opposed to the previous usage when it was attached to topological groups or actions, to denote those countable groups that do admit a generic action on the Cantor space. Kwiatkowska in \cite{Kwia} later showed that free groups on finitely many generators do have this strong topological Rokhlin property in contrast to the result of Kechris and Rosendal from \cite{KeRo} that free groups on countably infinitely many generators do not.

The case of other countable groups has not been known and it is the aim of this paper to fill this gap. We show that the strong topological Rokhlin property, i.e. having a generic action on the Cantor space, is a property that is visible on the symbolic dynamical level. Indeed we show it is tightly connected with the structure of sofic subshifts over the corresponding group. Before continuing further, let us mention that symbolic dynamics is another area of dynamics traditionally reserved for actions of $\Int$ which has recently seen a tremendous progress in investigating more general group actions, where again substantial differences occur when the acting group varies. Early and by now classical results where these differences first occurred, between $\Int$ and $\Int^2$, are related to the domino problem and the existence of weakly and strongly aperiodic subshifts of finite type (see \cite{Berger} for these early results and see \cite{Cohen} for a recent breakthrough, where geometric group theory was beautifully blended into symbolic dynamics). Another important early occurrence is related to the Gottschalk surjunctivity conjecture (\cite{Gott}) which later gave rise to sofic groups (\cite{Grom}). We refer to \cite{CSCo-book} for general introduction into symbolic dynamics over countable groups.

The second main aim of this paper is to the strong topological Rokhlin property with the genericity of the shadowing property, also known as the pseudo-orbit tracing property. Shadowing is by now one of the fundamental notions in dynamical systems, closely related to hyperbolicity and topological stability. We refer to the monograph \cite{Palm} for an introduction and historical background. Although it was originally defined for single homeomorphisms (in fact, diffeomorphisms) it makes perfect sense for general discrete group actions and it was for the first time defined in this generality in \cite{OsTi} and investigated since then in numerous publications, see e.g. \cite{ChKeo}, \cite{Mey}, \cite{BarGaRaLi}, \cite{LiChZh}. Genericity of shadowing has been also already extensively studied, see e.g. \cite{PilPla}, \cite{BrMeRa}, \cite{Ko07}, \cite{KoMaOp}, \cite{BeDa}, and references therein. In \cite{BeDa}, genericity of shadowing for homeomorphisms on the Cantor space has been derived as the property of the generic integer action. We continue in this line of research for more general groups and discover a very strong relation with the strong topological Rokhlin property; in fact, an equivalence between the STRP and genericity of shadowing.

We start the presentation of our results. For the sake of the next theorem we informally define a \emph{projectively isolated subshift} $X\subseteq A^G$, where $G$ is a countable group and $A$ a finite set with at least two elements, as a closed subshift such that there exists a subshift of finite type $Y\subseteq B^G$, for some finite $B$, and a factor map $\phi: Y\to X$ such that $\phi[Y']=X$ for every subshift $Y'\subseteq Y$ that is sufficiently close to $Y$ with respect to the Hausdorff distance. A precise definition is provided later as Definition~\ref{def:projisolated}. Also for two subshifts $X,Y\subseteq A^G$ and a finite set $F\subseteq G$ we write $X\subseteq_F Y$ to denote that $X\subseteq Y$ and moreover the $F$-patterns of $X$ coincide with the $F$-patterns of $Y$.

\begin{thmx}\label{thm:intro1}
Let $G$ be a countable group. If $G$ is finitely generated, then the following are equivalent.

\begin{enumerate}
	\item\label{it:intro1-1} $G$ has the strong topological Rokhlin property.\medskip
	\item\label{it:intro1-2} For every closed subshift $X\subseteq A^G$, for some finite set with at least two elements, and every $\varepsilon>0$ there is a projectively isolated subshift $X'\subseteq A^G$ whose Hausdorff distance to $X$ is at most $\varepsilon$.\medskip
	\item\label{it:intro1-3} Shadowing is generic for continuous actions of $G$ on the Cantor space.\medskip
\end{enumerate}

For a general countable $G$, we have the equivalence between \eqref{it:intro1-1} and \eqref{it:intro1-2}, and moreover \eqref{it:intro1-3} implies \eqref{it:intro1-1} and \eqref{it:intro1-2}.
\end{thmx}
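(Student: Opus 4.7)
The plan is to first establish the standard dictionary between $\Act(G, 2^{\Nat})$ and the space of subshifts: every continuous action on the Cantor space together with a finite clopen generating partition $\PP$ induces an itinerary subshift $X_{\alpha,\PP} \subseteq \PP^G$, and as one refines $\PP$ one recovers $\alpha$ (up to conjugacy) as the inverse limit. Under this correspondence the Polish topology on actions is compatible with the topology induced by the relation $X \subseteq_F Y$ on subshifts, so a basic neighborhood of $\alpha$ pins down the $F$-patterns of $X_{\alpha,\PP}$ on a finite window $F \subseteq G$. This translates genericity questions for $\Homeo(2^{\Nat})$-conjugacy into combinatorial questions about finite patterns in $A^G$.

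For \eqref{it:intro1-1}$\iff$\eqref{it:intro1-2} I would invoke a Kechris--Rosendal-style criterion: a Polish group action has a comeager orbit if and only if the category of finite approximations satisfies joint embedding and a weak amalgamation condition. Under the dictionary above, the finite approximations are pairs $(X,F)$ with $X$ a subshift over $G$ and $F \subseteq G$ finite, ordered by $F$-refining extensions. Unwinding the amalgamation condition in this setting gives exactly the existence of a projectively isolated subshift approximating any given subshift at any precision, i.e.\ condition \eqref{it:intro1-2}. The direction \eqref{it:intro1-2}$\Rightarrow$\eqref{it:intro1-1} is then executed by a Fra\"iss\'e-style diagonalization producing the generic action as an inverse limit of projectively isolated subshifts, while \eqref{it:intro1-1}$\Rightarrow$\eqref{it:intro1-2} extracts the projectively isolated approximations by reading off the $F$-pattern languages along the comeager conjugacy class. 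For \eqref{it:intro1-3}$\Rightarrow$\eqref{it:intro1-1},\eqref{it:intro1-2}, I would use that a shadowing action on the Cantor space with a clopen generator is a factor of a shift of finite type (a group-action analog of Walters' theorem; SFTs have shadowing tautologically). Consequently, density of shadowing implies density of subshifts with the required SFT covers, which is at least as strong as density of projectively isolated subshifts, yielding \eqref{it:intro1-2}.

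The main obstacle is \eqref{it:intro1-1}$\Rightarrow$\eqref{it:intro1-3} in the finitely generated case, which I would decompose as \eqref{it:intro1-2}$\Rightarrow$\eqref{it:intro1-3}. The key lemma is that every projectively isolated subshift has shadowing: its SFT cover $Y$ automatically has shadowing, and the projective stability of the factor map $\phi \colon Y \to X$ (nearby subshifts of the cover still surject onto $X$) allows one to lift a pseudo-orbit in $X$ to a pseudo-orbit in $Y$, straighten it to a genuine orbit by shadowing in $Y$, and project it back to $X$. Finite generation of $G$ is used in two essential places: it lets one express shadowing through a finite symmetric generating set so that the $\varepsilon$-pseudo-orbit condition depends only on a bounded window of $G$, and it makes shadowing a $G_\delta$ property in $\Act(G,2^{\Nat})$, so that density of shadowing promotes to comeagerness. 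The most delicate point is calibrating the metric on the Cantor factor to match the combinatorial stability encoded by Definition~\ref{def:projisolated}, so that the shadowing modulus of $Y$ descends to a quantitative shadowing modulus on $X$; this is where the quantitative form of projective isolation must interact carefully with the choice of compatible metric on the Cantor space.
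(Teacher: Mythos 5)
Your treatment of the equivalence \eqref{it:intro1-1}$\Leftrightarrow$\eqref{it:intro1-2} follows essentially the paper's route (the dictionary $\alpha\mapsto\QQ(\alpha,\PP)$ together with the Rosendal-type local criterion for a comeager orbit, Proposition~\ref{prop:BYMeTs}), but both directions involving shadowing contain genuine errors. For \eqref{it:intro1-3}$\Rightarrow$\eqref{it:intro1-2}: being a factor of a subshift of finite type is exactly being sofic, and sofic subshifts (indeed SFTs themselves) are dense in $\SH_G(A)$ for \emph{every} countable group by Lemma~\ref{lem:SFTnbhrds}, so ``density of subshifts with SFT covers'' is vacuously true and is strictly \emph{weaker}, not stronger, than density of projectively isolated subshifts; projective isolation requires in addition that every subshift in a whole neighborhood of the cover still surjects onto $X$, and shadowing of the covering action gives no such stability. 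Likewise, density of shadowing actions is automatic for every group (actions conjugate to SFTs are dense), so an argument that only consumes density cannot distinguish $\Int$ from $\Int^2$. The paper instead argues by contraposition: if projectively isolated subshifts are not dense, the set of actions admitting a non-sofic symbolic factor is non-meager (Proposition~\ref{prop:NoSTRPnonsoficfactors}), while every symbolic factor of a shadowing action is sofic (Proposition~\ref{prop:shadowingfactorsonsofic}); hence shadowing cannot be comeager. This genuinely uses non-meagerness, not density.

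For \eqref{it:intro1-2}$\Rightarrow$\eqref{it:intro1-3} your key lemma --- that every projectively isolated subshift has shadowing --- is false. Among subshifts, shadowing is equivalent to being of finite type, and the paper's own example of the subshift $\{x\in\{0,1\}^\Int\colon |x^{-1}(\{1\})|\leq 1\}$ is strongly projectively isolated but not of finite type, hence has no shadowing; the pseudo-orbit lifting you propose also does not go through, since pseudo-orbits need not lift along factor maps. Your auxiliary claim that finite generation makes shadowing a $G_\delta$ subset of $\Act_G(\CC)$ is false as well: for $G=\Int^2$ shadowing is dense yet meager, so it is not $G_\delta$. The paper's actual mechanism is different and is where finite generation enters: the generic action is an inverse limit of projectively isolated subshifts with isolated factor maps as bonding maps (Proposition~\ref{prop:inverselimitofprojiso}); replacing each term by a nearby SFT cover and using the stability of the isolated factor maps produces an inverse system of SFTs satisfying the Mittag-Leffler condition, to which the characterization of shadowing Cantor actions as Mittag-Leffler inverse limits of SFTs from \cite{LiChZh} applies, applied to the single comeager conjugacy class. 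The individual terms of the system need not have shadowing; only the limit does.
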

A sufficient condition that guarantees the strong topological Rokhlin property, and therefore also the genericity of shadowing for finitely generated groups, is that for every subshift of finite type $X\subseteq A^G$, for some finite $A$, and for every finite $F\subseteq G$ there are a subshift of finite type $Y\subseteq_F X$ and a finite set $E\subseteq G$ so that $Y$ is $\subseteq_E$-minimal.\medskip

We remark that there have been few similar results where the density of isolated points in some spaces was equivalent to genericity of certain actions, see \cite{GlKiMe} for actions of groups on countable sets (and another proof and related results in \cite{DouMa}) and \cite{KeLiPi} for representations of $C^*$-algebras and unitary representations of groups (see also \cite{DouMaVal} for another proof).\medskip

Theorem~\ref{thm:intro1} is then exploited to produce new examples and non-examples. From the positive side we show:
\begin{thmx}\label{thm:intro2}
Let $G=\bigstar_{i\leq n} G_i$ be a free product of the groups $(G_i)_{i\leq n}$, each of them being either finite or cyclic. Then $G$ has the strong topological Rokhlin property.
\end{thmx}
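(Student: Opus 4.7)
The plan is to verify the sufficient condition stated right after Theorem~\ref{thm:intro1}: for every subshift of finite type $X \subseteq A^G$ and every finite $F \subseteq G$, produce an SFT $Y \subseteq_F X$ and a finite $E \subseteq G$ with $Y$ being $\subseteq_E$-minimal. By that criterion this yields the STRP for $G$. The strategy is to assemble $Y$ as a finite union of strongly periodic orbits sitting inside $X$ and together realizing all $F$-patterns of $X$. The first ingredient is Bass--Serre theory: $G = \bigstar_{i \leq n} G_i$ acts on its associated tree with trivial edge stabilizers and vertex stabilizers conjugate to the $G_i$. Since each factor is residually finite, so is $G$ (a classical theorem about free products), giving a descending chain of finite-index normal subgroups $N_k \lhd G$ with $\bigcap_k N_k = \{e\}$. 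Using the free-product structure (congruence quotients for the cyclic factors combined with finite quotients of the finite factors) one arranges that a fundamental domain $D_k$ for $N_k$ in the Cayley graph of $G$ is a connected, tree-like set whose injectivity radius in the Cayley graph of $G/N_k$ tends to infinity with $k$.

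The second step is a \emph{periodic completion lemma}: for any non-empty SFT $X \subseteq A^G$ with defining window $W$ and any finite $F \subseteq D_k$, every $x \in X$ gives rise to a strongly $N_k$-periodic configuration $\tilde{x} \in X$ with $\tilde{x}|_F = x|_F$, provided $k$ is sufficiently large. The construction is the natural one: set $\tilde{x}(gn) := x(g)$ for $g \in D_k$ and $n \in N_k$. The point, specific to free products, is that the identifications introduced by passing to $G/N_k$ occur only along long loops; for $k$ large these loops are longer than the diameter of $W$, so no forbidden $W$-pattern can appear in $\tilde{x}$ at a seam between translates of $D_k$.

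Next, enumerate the finitely many $F$-patterns $p_1, \dots, p_m$ realized by $X$, apply the completion lemma to a witness of each $p_j$ to obtain strongly periodic $x_1, \dots, x_m \in X$, and set $Y := \bigcup_{j \leq m} G \cdot x_j$. Being a finite union of finite orbits, $Y$ is a finite shift-invariant subset of $A^G$ and hence is automatically an SFT, defined by forbidding all $E$-patterns that do not appear in $Y$, where $E$ is a fundamental domain of $\bigcap_j \mathrm{Stab}(x_j)$ chosen large enough to separate the orbits $G \cdot x_j$. By construction $Y \subseteq_F X$, and any proper sub-SFT of $Y$ must omit a whole orbit $G \cdot x_j$ and therefore lose the $E$-pattern unique to that orbit, so $Y$ is $\subseteq_E$-minimal as required.

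The main obstacle is the periodic completion lemma. When all the factors $G_i$ are finite, $G$ is virtually free by Karrass--Pietrowski--Solitar, and periodic extension along a free normal subgroup of finite index is a standard cut-and-paste argument on the Cayley tree. The presence of infinite cyclic factors is the genuinely new ingredient: cutting each $\Int$-factor to $\Int/k\Int$ creates loops in the quotient Cayley graph, and one must verify that, thanks to the tree-like amalgamation of the factors in $G$, those loops do not interfere with any translate of the forbidden-pattern window $W$. Making this rigorous requires a careful analysis using the normal form of elements of a free product and tracking how syllable lengths in that normal form interact with the girth of the Cayley graph of $G/N_k$.
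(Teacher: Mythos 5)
Your strategy founders on the ``periodic completion lemma.'' First, the formula $\tilde{x}(gn):=x(g)$ does not in general produce an element of $X$ even for $G=\Int$: in the golden mean shift (forbid the pattern $11$ on the window $\{0,1\}$), periodizing a configuration whose restriction to $D_k=\{0,\dots,k-1\}$ carries the symbol $1$ at both endpoints creates the forbidden word $11$ at the seam. The obstruction at a seam is not that a translate of the window $W$ wraps around a short loop of the quotient Cayley graph --- which is all that a girth or injectivity-radius bound excludes --- but that the window straddles the boundary between two translates of $D_k$ and there sees a juxtaposition of two distant pieces of $x$ that need not be jointly allowed; no lower bound on the girth removes this. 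Over $\Int$ one repairs this by closing up a path in the Rauzy graph rather than periodizing naively, which is why periodic points are dense in $\Int$-SFTs. Over the free products covered by the theorem no repair is possible: by a theorem of Piantadosi there is a non-empty \emph{weakly aperiodic} subshift of finite type over $F_2=\Int\ast\Int$, i.e.\ an SFT in which every configuration has infinite orbit. For such an $X$ there is no strongly periodic configuration in $X$ at all, so no non-empty finite union of finite orbits is contained in $X$ and your $Y$ cannot exist. (The result of \cite{Cohen} rules out strongly aperiodic SFTs over these groups, but weak aperiodicity is exactly what defeats a periodic-orbit approach.)

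There is also a structural mismatch with the paper: you aim at the sufficient condition stated after Theorem~\ref{thm:intro1}, namely density of isolated ($\subseteq_E$-minimal) SFTs, which is strictly stronger than what the paper actually establishes for free products. The paper's proof of Theorem~\ref{thm:intro2} only shows density of \emph{strongly projectively isolated} subshifts (Definition~\ref{def:projisolated}) and then invokes Theorem~\ref{thm:mainRokhlin}. In place of periodic orbits it uses \emph{coloring automata} (Definition~\ref{def:automaton}): the automaton deterministically colors the Cayley graph outward from a seed while recording, over the enlarged alphabet $A\times\{\leftarrow,\rightarrow,\emptyset\}^S$, the direction from which each site was reached. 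The resulting subshift $\tilde X_\Omega$ is isolated (Theorem~\ref{thm:automataproduceprojisolatedshfts}) not because its orbits are finite --- they typically are not --- but because the direction data force every configuration to be a limit of seed-generated ones; its projection $X_\Omega$ back to $A^G$ sits $\subseteq_F$-below the given SFT and is only projectively isolated. The reduction to the free factors goes through Lemma~\ref{lem:freeprodofshifts} and Proposition~\ref{prop:freeprodsautomata}, with the infinite cyclic case handled via the Pavlov--Schmieding ``no middle cycles'' graphs rather than via periodic points. To salvage your outline you would have to abandon finite orbits and construct such non-periodic isolated or projectively isolated witnesses instead.
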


From the negative side we have more results and the following is a selection of some of them.
\begin{thmx}\label{thm:intro3}
Let $G$ be one of the following groups:
\begin{itemize}
	\item finitely generated infinite nilpotent group that is not virtually cyclic;
	\item $G_1\times G_2\times G_3$, where $G_i$, for $i\in\{1,2,3\}$ is finitely generated and recursively presented, and $G_1$ is indicable;
	\item an infinitely generated group such that for every finitely generated subgroup $H$ there is either $g\in G\setminus H$ that centralizes $H$, or has no relation with $H$ whatsoever.
\end{itemize}
Then $G$ does not have the strong topological Rokhlin property.
\end{thmx}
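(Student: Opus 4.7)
The plan is to use Theorem~\ref{thm:intro1}, specifically the equivalence \eqref{it:intro1-1}$\Leftrightarrow$\eqref{it:intro1-2}: to show that $G$ does not have the STRP it suffices to exhibit a closed subshift $X\subseteq A^G$ and an $\varepsilon>0$ such that no projectively isolated subshift lies within Hausdorff distance $\varepsilon$ of $X$. The three items require different witnesses, but the scheme is uniform: produce $X$ so that any candidate SFT cover $Y$ with factor map $\phi\colon Y\to X'$, for $X'$ near $X$, admits a strictly smaller SFT subsystem $Y'\subsetneq Y$ arbitrarily close to $Y$ with $\phi[Y']\subsetneq X'$, contradicting projective isolation.

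For the nilpotent case, a finitely generated infinite nilpotent group that is not virtually cyclic has abelianization of torsion-free rank at least two, so $G$ surjects onto $\Int^2$. I pull back a witness subshift over $\Int^2$ for the failure of the STRP (due to Hochman) along this quotient map, and verify that a projectively isolated approximation over $G$ would push down through the quotient to a projectively isolated approximation over $\Int^2$, contradicting the already known non-STRP of $\Int^2$. The push-down verification is essentially formal, using that quotients of SFTs by factor maps are SFTs and that Hausdorff distance is well-behaved under the induced map on subshift spaces.

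For the three-factor direct product, indicability of $G_1$ gives a surjection $G\twoheadrightarrow\Int\times G_2\times G_3$, and recursive presentations of the factors allow one to simulate a Turing machine inside an SFT on this product; three factors supply enough room (in the spirit of Aubrun--Kari or Jeandel--Vanier style encodings) to realize any computably enumerable pattern language. Taking $X$ to be a sofic subshift whose projected language is r.e.\ but not recursive, any projectively isolated approximation would, via the finite defining window of the cover and the finite description of $\phi$, decide membership in this language, a contradiction. The main technical hurdle is performing the simulation on the specific direct-product structure rather than on a free or surface group.

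For the infinitely generated case, I proceed diagonally. Any projectively isolated cover $Y$ is defined by finitely many forbidden patterns supported on some finitely generated subgroup $H\leq G$, and by hypothesis there is $g\in G\setminus H$ that either centralizes $H$ or has no relations with $H$; in either case one adds a constraint across $g$ to produce $Y'\subsetneq Y$ arbitrarily close to $Y$ with strictly smaller image, killing projective isolation, provided $X$ has been built (by a standard enumeration of finitely generated subgroups) to have a sufficiently rich pattern language over every finitely generated subgroup. The principal obstacle throughout is converting a small perturbation of $Y$ into a genuine strict shrinkage of $\phi[Y]$, since projective isolation demands equality and not merely inclusion; in each case this is overcome by choosing $X$ with enough combinatorial or computational rigidity ($\Int^2$-rigidity, undecidability, or genuine infinite generation) to preclude the required stability.
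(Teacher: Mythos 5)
Your third bullet (the infinitely generated case) is essentially the paper's argument: take the SFT cover $Y=X_{\VV_F}$ of a candidate projectively isolated subshift, set $H=\langle F\rangle$, and use the element $g\in G\setminus H$ that centralizes $H$ or is free from it to impose an extra constraint across $g$, producing $Z\in\NN_Y^F$ with $\phi[Z]\subsetneq X$. (The paper does this for \emph{every} non-monochromatic subshift, so no special construction of $X$ is needed; the real work is verifying $Z\neq\emptyset$ and $Z_F=Y_F$ via coset representatives and the retraction $\bar H\to H$, which you should not wave away.) The first two bullets, however, each contain a genuine gap.

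For the nilpotent case, the claim that the ``push-down verification is essentially formal'' is false, and this is precisely the difficulty the paper is organized around. A subshift over $G$ on which the kernel $N$ acts trivially corresponds to a subshift over $G/N\cong\Int^2$, and SFT/sofic \emph{pulls back} along the quotient (for $N$ finitely generated), but it does not \emph{push down}: a sofic $N$-invariant subshift over $G$ descends only to an \emph{effective} subshift over $\Int^2$ (this is the content of the simulation theorems). Worse, the SFT cover $V\subseteq C^G$ and the neighborhood witnessing projective isolation over $G$ need not be $N$-invariant at all, so there is no induced ``projectively isolated approximation over $\Int^2$'' to contradict. The paper circumvents this by (i) replacing projective isolation with the $\AAA$-minimality characterization (Theorem~\ref{thm:2ndcharacterizationofSTRP}), and (ii) choosing the $\Int^2$-witness to be robust against all \emph{effective} subsystems — Hochman's subshift that is a disjoint union of minimal subshifts, every one of which is an accumulation point of others in any effective subsystem. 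Your argument as stated does not close this loop. For the direct-product case, the proposed contradiction mechanism is broken: a projectively isolated approximation of $X$ agrees with $X$ only on a finite window and need not equal $X$, so its being sofic (hence having co-r.e.\ language) says nothing about deciding the language of $X$ itself; indeed SFTs, whose languages are decidable, are dense in every neighborhood of $X$ by Lemma~\ref{lem:SFTnbhrds}, so no contradiction can arise from computability of the approximation's own language. What actually fails for a projectively isolated candidate is $\AAA$-minimality of the factor image, and to arrange that failure the paper codes an effectively closed set of non-trivial Medvedev degree into Toeplitz configurations, feeds it through Barbieri's simulation theorem to realize it as projective subdynamics of a sofic subshift over $G_1\times G_2\times G_3$, and again invokes the infinite-disjoint-union-of-minimal-subshifts property for every effective subsystem. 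The computational rigidity you invoke is the right intuition, but it must be deployed against every effective subsystem of the cover, not against a single approximation.
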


The paper is organized as follows. Section~\ref{sect:Cantor} consists of preliminaries and basic results on group actions on the Cantor space and symbolic dynamics over general groups. Most of the definition, crucial for the rest of the paper, are also contained there. Section~\ref{sect:mainproofs} contains the proof of the equivalence between \eqref{it:intro1-1} and \eqref{it:intro1-2} of Theorem~\ref{thm:intro1}. Section~\ref{sect:freeproducts} introduces new notions and techniques that are jointly with Theorem~\ref{thm:intro1} necessary to prove Theorem~\ref{thm:intro2}. Section~\ref{sect:noSTRP} is focused on providing new non-examples of groups with the strong topological Rokhlin property and the proof of Theorem~\ref{thm:intro3} is contained there, and Section~\ref{sect:genericdynamics} shows the equivalence of \eqref{it:intro1-3} with \eqref{it:intro1-2} of Theorem~\ref{thm:intro1}. Finally, in Section~\ref{sect:problems} we collect few open problems.\medskip

Let us also mention that our standard monographs concerning symbolic dynamics over general groups, topological dynamics of general group actions, and (geometric) group theory respectively are \cite{CSCo-book} (and also \cite{Bar17}), \cite{KeLi-book}, and \cite{DruKap-book} respectively, to which we refer in case the reader finds any unexplained notion from these areas in the sequel.
\section{Cantor and symbolic dynamics}\label{sect:Cantor}

Let $\CC$ denote the Cantor space. Denote by $\Homeo(\CC)$ the topological group of all self-homeomorphisms of $\CC$ equipped with the uniform topology.
\subsection{Spaces of actions and shifts}	
\subsubsection{Spaces of actions}
	\begin{definition}\label{def:spaceofactions}
For a countable group $G$, denote by $\Act_G(\CC)$ the Polish space of all continuous actions of $G$ on $\CC$. Formally, $\Act_G(\CC)$ is identified with the space of all homomorphisms from $G$ into the topological group $\Homeo(\CC)$, which can be further identified with a closed subset of $\Homeo(\CC)^G$ with the product topology.
	\end{definition}

Recall that by the Stone duality there is a one-to-one correspondence between homeomorphisms of $\CC$ and Boolean algebra isomorphisms of $\mathrm{Clopen}(\CC)$, the algebra of clopen sets of $\CC$. Notice also that this correspondence is a topological group isomorphism between $\Homeo(\CC)$ and $\mathrm{Aut}\big(\mathrm{Clopen}(\CC)\big)$, where the latter is equipped with the pointwise convergence topology. This observation immediately gives the following description of basic open subsets of $\Act_G(\CC)$ whose proof is left to the reader.
\begin{lemma}\label{lem:basicopennbhds}
Let $G$ be a countable group and $\alpha\in\Act_G(\CC)$. The following sets form basic open neighborhoods of $\alpha$: \[\NN_\alpha^{F,\PP}:=\big\{\beta\in\Act_G(\CC)\colon \forall f\in F\;\forall x\in\CC\;\forall P\in\PP\;\big(\alpha(f)x\in P\Leftrightarrow \beta(f)x\in P\big)\big\},\] where $F\subseteq G$ is a finite subset and $\PP$ is a partition of $\CC$ into disjoint non-empty clopen sets.
\end{lemma}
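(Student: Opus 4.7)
The plan is to unpack the topological group isomorphism $\Homeo(\CC) \cong \mathrm{Aut}\big(\mathrm{Clopen}(\CC)\big)$ recalled just above the lemma and apply it coordinate-wise to the product $\Homeo(\CC)^G$. Under the pointwise convergence topology on $\mathrm{Aut}\big(\mathrm{Clopen}(\CC)\big)$, a neighborhood basis at an automorphism $\sigma$ is given by sets of the form $\{\tau : \tau(P_i) = \sigma(P_i), \ i = 1, \dots, n\}$ for finite tuples $P_1, \dots, P_n \in \mathrm{Clopen}(\CC)$. A routine first step is to replace any such tuple by the atoms of the finite Boolean subalgebra it generates, i.e.\ by a finite clopen partition $\PP$ of $\CC$; this yields the equivalent neighborhood basis at $\sigma$ consisting of the sets $\{\tau : \tau(P) = \sigma(P) \text{ for all } P \in \PP\}$.

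Next I would translate this condition back into the language of homeomorphisms via Stone duality and rewrite it pointwise. Under the contravariant identification, $\tau(P) = \sigma(P)$ for all $P \in \PP$ becomes $\psi^{-1}(P) = \phi^{-1}(P)$ for the corresponding homeomorphisms $\phi, \psi$, which is manifestly the pointwise assertion $\phi(x) \in P \iff \psi(x) \in P$ for every $x \in \CC$ and $P \in \PP$. If one prefers the covariant convention instead, the same result follows after replacing $\PP$ by the finite clopen partition $\{\phi^{-1}(P) : P \in \PP\}$; this is consistent because pointwise convergence on $\mathrm{Aut}$ is automatically a group topology, so the two conventions generate the same neighborhood filter.

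Finally, $\Act_G(\CC)$ carries the subspace topology induced from $\Homeo(\CC)^G$ with the product topology, so basic open neighborhoods of $\alpha$ are intersections of finitely many cylinder sets, one per element of a finite $F \subseteq G$. Each cylinder a priori involves its own finite clopen partition $\PP_f$, but passing to a common finite clopen refinement $\PP$ of the $\PP_f$ produces a single neighborhood of the form $\NN_\alpha^{F,\PP}$ lying inside the intersection. Hence $\{\NN_\alpha^{F,\PP}\}$ is indeed a neighborhood basis at $\alpha$. There is no genuine obstacle in the argument; the only mild point to watch is the direction of the duality (forward image versus preimage), which is absorbed by passing to an image or inverse-image partition as needed.
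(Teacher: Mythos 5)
Your argument is correct and is exactly the route the paper intends: the lemma's proof is left to the reader, but the preceding paragraph points precisely at the identification of $\Homeo(\CC)$ with $\mathrm{Aut}\big(\mathrm{Clopen}(\CC)\big)$ under pointwise convergence, which you unpack coordinate-wise (passing to atoms of the generated subalgebra and to a common refinement over a finite $F\subseteq G$). The translation $\phi^{-1}(P)=\psi^{-1}(P)$ into the pointwise statement $\phi(x)\in P\Leftrightarrow\psi(x)\in P$ is handled correctly, so nothing is missing.
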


Notice that for any countable group $G$ the group $\Homeo(\CC)$ naturally acts on $\Act_G(\CC)$ by conjugation, where for $\phi\in\Homeo(\CC)$ and $\alpha\in\Act_G(\CC)$ the action $\phi\alpha\phi^{-1}$ is naturally defined by \[\big(\phi\alpha\phi^{-1}\big)(g):=\phi\alpha(g)\phi^{-1},\quad\text{for $g\in G$.}\] 

Although informally defined already in the abstract, for the sake of formal soundness let us provide a precise definition of the strong topological Rokhlin property.
\begin{definition}
Let $G$ be a countable group. We say that $G$ has the \emph{strong topological Rokhlin property} if there exists $\alpha\in\Act_G(\CC)$ such that the set \[\{\phi\alpha\phi^{-1}\colon \phi\in\Homeo(\CC)\}\subseteq \Act_G(\CC)\] is comeager.
\end{definition}
We state the following important fact that any invariant subset in $\Act_G(\CC)$ with the Baire property is either meager, or comeager.
\begin{fact}\label{fact:0-1-law}
Let $G$ be a countable group and let $\AAA\subseteq \Act_G(\CC)$ be a subset with the Baire property that is closed under conjugation, i.e. for any $\phi\in\Homeo(\CC)$, $\phi\AAA\phi^{-1}=\AAA$. Then $\AAA$ is either meager, or comeager.
\end{fact}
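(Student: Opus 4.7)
The plan is to deduce this fact from a standard topological zero-one law for continuous actions of Polish groups on Polish spaces, applied to the conjugation action of $\Homeo(\CC)$ on $\Act_G(\CC)$. The general principle I would invoke (or quickly reprove) is: if a Polish group $H$ acts continuously on a Polish space $X$ and the action is topologically transitive, meaning that for every pair of non-empty open sets $U, V \subseteq X$ there is some $h \in H$ with $hU \cap V \neq \emptyset$, then every $H$-invariant subset of $X$ with the Baire property is either meager or comeager.

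The Baire-category core of that principle is straightforward and I would include it for self-containedness. Assume $\AAA$ is $H$-invariant, has BP, and is non-meager. By the Baire property one finds a non-empty open $U \subseteq X$ with $U \setminus \AAA$ meager. Suppose for contradiction that $X \setminus \AAA$ is also non-meager; since $X \setminus \AAA$ likewise has BP and is invariant, there is a non-empty open $V$ with $V \cap \AAA$ meager. By topological transitivity pick $h \in H$ with $hU \cap V \neq \emptyset$. Since $h$ is a homeomorphism and $\AAA$ is $H$-invariant, $hU \setminus \AAA = h(U \setminus \AAA)$ is meager. Then the non-empty open set $hU \cap V$ is covered by the meager sets $hU \setminus \AAA$ and $V \cap \AAA$, contradicting the Baire category theorem.

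Second, I would verify topological transitivity for the conjugation action of $\Homeo(\CC)$ on $\Act_G(\CC)$ by exhibiting a dense orbit. Any dense orbit forces topological transitivity: if the conjugacy class of some $\alpha \in \Act_G(\CC)$ is dense and $U, V$ are non-empty open, pick $\phi_1, \phi_2 \in \Homeo(\CC)$ with $\phi_1 \alpha \phi_1^{-1} \in U$ and $\phi_2 \alpha \phi_2^{-1} \in V$; then $\phi_2 \phi_1^{-1}$ conjugates an element of $U$ into $V$. The existence of such an $\alpha$ for an arbitrary countable group $G$ is exactly the theorem of Glasner--Weiss, extended by Hochman and already cited in the introduction.

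There is no real obstacle here: the fact is a routine assembly of these two ingredients. The only point that needs care is verifying the equivalence between having a dense orbit and topological transitivity in this specific setting, but since $\Act_G(\CC)$ is Polish and the conjugation action is by homeomorphisms, the standard argument goes through unchanged.
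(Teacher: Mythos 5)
Your proposal is correct and follows essentially the same route as the paper: the paper likewise invokes the topological zero-one law (citing \cite[Theorem 8.46]{KechrisBook}) for the conjugation action of $\Homeo(\CC)$ on $\Act_G(\CC)$ and reduces everything to topological transitivity, which it obtains from the existence of a dense conjugacy class via \cite[Proposition 1.2]{Hoch12}. The only difference is that you unfold the Baire-category core of the cited theorem explicitly, which is fine.
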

\begin{proof}
We apply \cite[Theorem 8.46]{KechrisBook} with $G$ equal to $\Homeo(\CC)$ and $X$ equal to $\Act_G(\CC)$. We only need to check that the action of $\Homeo(\CC)$ on $\Act_G(\CC)$ is topologically transitive. This is equivalent with the existence of an element $\alpha\in\Act_G(\CC)$ with a dense conjugacy class. This is proved in \cite[Proposition 1.2]{Hoch12}  - notice that the proposition is stated only for $\Int^d$, however the first paragraph of the proof mentions it works for any countable group.
\end{proof}
\subsubsection{Subshits and their spaces}
Let $A$ be a finite set with at least two elements and $G$ be a countable group. Consider the set $A^G$ equipped with the product topology with which it is either finite discrete if $G$ is finite, or homeomorphic to the Cantor space if $G$ is infinite. The group $G$ acts on $A^G$ by \emph{shift}, i.e. for $x\in A^G$ and $g,h\in G$ we have \[gx(h):=x(g^{-1}h).\] When we need a symbol for the shift action, we shall use $\sigma: G\curvearrowright A^G.$  Any closed and $G$-invariant subspace of $A^G$ will be called a \emph{subshift}.

If $F\subseteq G$ is a finite set, elements from $A^F$ will be called \emph{patterns}. If $X\subseteq A^G$ is a subshift, a pattern $p\in A^F$ is called \emph{forbidden} in $X$ if there is no $x\in X$ such that $x\upharpoonright F=p$; otherwise, the pattern $p$ is called \emph{allowed} in $X$.

Set \[X_F:=\{p\in A^F\colon \exists x\in X\; (x\upharpoonright F=p)\}\]

and for any pattern $p\in X_F$ we shall also set \[C_p(X):=\{x\in X\colon x\upharpoonright F=p\}.\] Then $\{C_p(X)\colon p\in X_F\}$ is a clopen partition of $X$. Sometimes, when the subshift $X$ is clear from the context, we may write just $C_p$ instead of $C_p(X)$.\medskip

By $\SH_G(A)$, we shall denote the compact space of all subshifts of $A^G$ equipped with the Vietoris topology - or equivalently, with the topology induced by the Hausdorff metric coming from a compatible metric on $A^G$, and by $\SH_G(n)$ we denote the space $\SH_G(\{1,\ldots,n\})$. When there is no danger of confusion, we may identify the spaces $\SH_G(A)$ and $\SH_G(n)$ for $|A|=n$.


In the sequel, we shall use the wording `non-trivial finite set' to emphasize that the set in question has at least two elements.\medskip

There is a convenient form of basic open neighborhoods of subshifts in $\SH_G(A)$. The proof of the following lemma is similar to the proof of Lemma~\ref{lem:basicopennbhds} and left to the reader.
\begin{lemma}
	Let $G$ be a countable group, $A$ a non-trivial finite set, and $X\subseteq A^G$ a subshift. Basic open neighborhoods of $X$ in $\SH_G(A)$ are of the form \[\NN_X^F:=\{Y\subseteq A^G\colon Y_F=X_F\},\] where $F\subseteq G$ is a finite set.
\end{lemma}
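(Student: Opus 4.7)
The plan is to exhibit a convenient compatible metric on $A^G$ whose induced Hausdorff metric on $\SH_G(A)$ makes the correspondence between metric balls and the sets $\NN_X^F$ transparent. First I would fix an enumeration $G=\{g_0,g_1,\ldots\}$ (or a cofinal sequence $F_0\subseteq F_1\subseteq\ldots$ of finite subsets exhausting $G$) and equip $A^G$ with the metric $d(x,y)=2^{-n}$, where $n$ is minimal with $x(g_n)\neq y(g_n)$, convening that $d(x,y)=0$ if $x=y$. This metric is compatible with the product topology, and the key feature I want to exploit is that $d(x,y)\le 2^{-n}$ holds if and only if $x\upharpoonright F_n=y\upharpoonright F_n$, where $F_n=\{g_0,\ldots,g_{n-1}\}$.

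Next I would verify that each set $\NN_X^F$ is open. Given $F$, pick $n$ with $F\subseteq F_n$ and let $\varepsilon=2^{-n}$. Any subshift $Y$ with Hausdorff distance less than $\varepsilon$ from $X$ satisfies the following: for every $x\in X$ there is $y\in Y$ with $d(x,y)<\varepsilon$, hence $y\upharpoonright F_n=x\upharpoonright F_n$ and in particular $y\upharpoonright F=x\upharpoonright F$; the symmetric statement gives $Y_F=X_F$. Thus a Hausdorff ball of radius $\varepsilon$ around $X$ is contained in $\NN_X^F$, so $\NN_X^F$ is open.

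Conversely, to show the sets $\NN_X^F$ form a neighborhood basis at $X$, I would start with an arbitrary Hausdorff neighborhood of $X$, which contains a ball $B_\varepsilon(X)$ for some $\varepsilon>0$. Choose $n$ with $2^{-n}<\varepsilon$ and set $F=F_n$. If $Y\in\NN_X^F$, then every $x\in X$ has its restriction $x\upharpoonright F$ appearing in some $y\in Y$, so $d(x,y)\le 2^{-n}<\varepsilon$; again the reverse inclusion is symmetric, so the Hausdorff distance between $X$ and $Y$ is at most $2^{-n}<\varepsilon$. This shows $\NN_X^F\subseteq B_\varepsilon(X)$, completing the basis property.

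I do not anticipate any genuine obstacle here; the only thing to be careful about is to choose a metric on $A^G$ so that small distance is literally equivalent to agreement on a prescribed finite subset of $G$, which reduces the Hausdorff condition to the combinatorial pattern equality $Y_F=X_F$. Once the metric is fixed, the two inclusions in the previous paragraphs are really the same computation run in opposite directions.
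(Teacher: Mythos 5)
Your proof is correct, and it is essentially the standard verification that the paper leaves to the reader (the paper only remarks that the argument is similar to that of Lemma~2.2, which is also omitted): choosing an ultrametric on $A^G$ for which small distance is literally agreement on an initial finite window makes the Hausdorff balls and the sets $\NN_X^F$ mutually cofinal. The only cosmetic point is that your first paragraph shows $\NN_X^F$ contains a ball around $X$ rather than full openness, but since $\NN_Y^F=\NN_X^F$ for every $Y\in\NN_X^F$ the same computation applies at every point of the set.
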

Notice that the set of the form $\NN_X^F$ as above is actually clopen in $\SH_G(A)$.
\begin{remark}
Since the spaces of subshifts $\SH_G(A)$ will play a major role in this paper, we feel obliged to provide few comments on them. They closely resemble the spaces of subgroups of a given countable group with the Chabauty topology (see \cite{Cha}) and surely any reader familiar with the latter will also feel comfortable with the former. We could not track the origin of when these spaces appeared for the first time and apparently at least for the group $\Int$, and also for $\Int^d$, they have been in a folklor use for some time. For more general groups the oldest references we could find were \cite{GaoJacSew} and \cite{FriTam}. Since then these spaces, for general groups, e.g. played a major role in characterizing strongly amenable countable groups in \cite{FriTamVF}.
\end{remark}

\begin{construction}
Let $\PP=\{P_1,\ldots,P_n\}$ be a partition of a compact metrizable zero-dimensional space $X$ into disjoint non-empty clopen sets and let $\alpha: G\curvearrowright X$ be a continuous action of a countable group $G$ on $X$. We denote by $Q^\alpha_\PP$ the continuous $G$-equivariant map from $X$ to $\PP^G$, freely identified with $n^G$, which is defined as follows. For any $x\in X$, $g\in G$ and $i\leq n$ \[Q^\alpha_\PP(x)(g):= i\;\;\;\text{ if and only if }\;\;\; \alpha(g^{-1})x\in P_i.\]
\end{construction}

The verification that $Q^\alpha_\PP$ is continuous and $G$-equivariant is straightforward.

By $\QQ(\alpha,\PP)$ we shall denote the subshift of $n^G$, an element of $\SH_G(n)$, which is the image of $Q^\alpha_\PP$.

The following basic lemma, which is a generalization of the well-known Curtis-Hedlund-Lyndon theorem from symbolic dynamics, shows that every continuous equivariant map from a zero-dimensional compact metrizable space $X$, equipped with an action $\alpha: G\curvearrowright X$, onto a subshift is of the form $Q^\alpha_\PP$, for some clopen partition $\PP$ of $X$.

\begin{lemma}\label{lem:factoringonshift}
Let $G$ be a countable group acting continuously on a zero-dimensional compact metrizable space $X$. Let us denote the action by $\alpha$. Let $\phi: (X,\alpha)\rightarrow A^G$ be a continuous $G$-equivariant map into the shift space $A^G$, for some non-trivial finite $A$. Then there is a clopen partition $\PP$ of $X$ such that $\phi=Q^\alpha_\PP$.
\end{lemma}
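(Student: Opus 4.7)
The plan is to read off the partition $\PP$ from the value of $\phi$ at the identity, and then verify that this choice reproduces $\phi$ via the construction $Q^\alpha_\PP$.

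First, for each $a\in A$ set
\[ P_a := \phi^{-1}\bigl(\{y\in A^G\colon y(1_G)=a\}\bigr) = \{x\in X\colon \phi(x)(1_G)=a\}. \]
Since $\phi$ is continuous and the cylinder $\{y\in A^G\colon y(1_G)=a\}$ is clopen in $A^G$, each $P_a$ is clopen in $X$. The family $\{P_a\}_{a\in A}$ is a disjoint cover of $X$; let $\PP$ be obtained from it by discarding the empty members, and identify $\PP$ with a subset of $A$ (or, equivalently, with $\{1,\ldots,n\}$ for $n=|\PP|$) via the labels $a$. This gives a clopen partition of $X$ into non-empty pieces, as required by the construction preceding the lemma.

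Next I would verify $Q^\alpha_\PP = \phi$ pointwise. Fix $x\in X$ and $g\in G$, and let $a\in A$ be such that $\alpha(g^{-1})x\in P_a$; equivalently, $Q^\alpha_\PP(x)(g)=a$. By the definition of $P_a$ this means $\phi\bigl(\alpha(g^{-1})x\bigr)(1_G)=a$. Using $G$-equivariance of $\phi$ and the shift formula $(hy)(k)=y(h^{-1}k)$, we compute
\[ \phi\bigl(\alpha(g^{-1})x\bigr)(1_G) = \bigl(\sigma(g^{-1})\phi(x)\bigr)(1_G) = \phi(x)(g\cdot 1_G) = \phi(x)(g). \]
Hence $Q^\alpha_\PP(x)(g)=\phi(x)(g)$ for every $x$ and $g$, so $Q^\alpha_\PP=\phi$.

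There is no real obstacle; the only point requiring minor care is aligning the convention for the shift action (so that evaluating $\phi\circ\alpha(g^{-1})$ at the identity returns the $g$-th coordinate of $\phi(x)$) and ensuring the partition consists of non-empty clopen sets as demanded by the construction. Both are handled by the two steps above.
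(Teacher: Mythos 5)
Your proposal is correct and is essentially identical to the paper's proof: the paper also takes $\PP:=\{\phi^{-1}(\{y\colon y(1_G)=a\})\colon a\in A\}$ (after restricting to the letters actually occurring in the image) and leaves the verification that $Q^\alpha_\PP=\phi$ to the reader, which you have carried out correctly.
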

\begin{proof}
Let $Y\subseteq A^G$ be the image of $X$ via $\phi$, which is a subshift. Enumerate $A$ as $\{a_1,\ldots,a_n\}$ and assume without loss of generality that for every $i\leq n$ there is $y\in Y$ with $y(1_G)=a_i$. We leave to the reader to check that the clopen partition \[\PP:=\Big\{\phi^{-1}\big(\{y\in Y\colon y(1_G)=a_i\}\big)\colon i\leq n\Big\}\] is as desired.
\end{proof}
The following standard result can be derived as a corollary.
\begin{corollary}[The Curtis-Hedlund-Lyndon theorem]
	Let $G$ be a countable group, $A$ and $B$ be two non-trivial finite sets, and $\phi:X\rightarrow Y$ be a continuous $G$-equivariant map between two subshifts $X\subseteq A^G$ and $Y\subseteq B^G$. Then there exist a finite set $F\subseteq G$ and a map $f:X_F\rightarrow B$ such that for every $x\in X$ and $g\in G$ \[\phi(x)(g)=f(g^{-1}x\upharpoonright F).\]
\end{corollary}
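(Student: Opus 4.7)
The plan is to derive the corollary by applying Lemma~\ref{lem:factoringonshift} to the subshift $X$ (viewed as a zero-dimensional compact metrizable space under the shift action $\sigma$) and the continuous $G$-equivariant map $\phi: X \to B^G$. The lemma produces a clopen partition $\PP$ of $X$ such that $\phi = Q^\sigma_\PP$. Concretely, writing $B = \{b_1, \ldots, b_n\}$ and assuming (without loss of generality) that every letter occurs at position $1_G$ in some element of $\phi(X) \subseteq B^G$, the partition is $\PP = \{P_i\}_{i \leq n}$ with $P_i = \phi^{-1}(\{y \in \phi(X) : y(1_G) = b_i\})$.

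Next I would invoke the basic fact that the topology of $X \subseteq A^G$ has a base consisting of cylinder sets $C_p(X)$ for $p \in X_F$, as $F$ ranges over finite subsets of $G$. Since each $P_i$ is clopen in $X$ and $X$ is compact, $P_i$ is a finite union of such cylinders; as $\PP$ itself is finite, there exists one finite set $F \subseteq G$ such that every $P_i$ is a union of cylinders $C_p(X)$ with $p \in X_F$. In other words, the membership of $x$ in a cell of $\PP$ depends only on $x \upharpoonright F$. This lets me define $f: X_F \to B$ by setting $f(p) := b_i$ whenever $C_p(X) \subseteq P_i$.

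Finally I would combine this with equivariance to recover the claimed formula. By construction, for any $x \in X$, we have $\phi(x)(1_G) = f(x \upharpoonright F)$, since $x \in P_i$ iff $\phi(x)(1_G) = b_i$ iff $f(x \upharpoonright F) = b_i$. Using the shift definition $g y(h) = y(g^{-1}h)$ and $G$-equivariance $\phi(g^{-1}x) = g^{-1}\phi(x)$, evaluation at $1_G$ gives $\phi(x)(g) = \phi(g^{-1}x)(1_G) = f\bigl((g^{-1}x) \upharpoonright F\bigr)$ for every $g \in G$, which is exactly the required identity.

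There is no real obstacle here; the only point that needs a moment of thought is the uniform choice of the finite window $F$ working for all cells simultaneously, which is immediate from finiteness of $\PP$ together with compactness of $X$.
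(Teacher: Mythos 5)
Your proof is correct and follows exactly the route the paper intends: the corollary is stated as a consequence of Lemma~\ref{lem:factoringonshift}, and you derive it by applying that lemma to the shift action on $X$, using compactness to find a single finite window $F$ on which every cell of the resulting clopen partition is determined, and then transporting the formula from $1_G$ to arbitrary $g$ via equivariance. Nothing is missing; the uniform choice of $F$ is handled correctly.
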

The following simple lemma will also prove to be useful in the sequel.
\begin{lemma}\label{lem:SFTfactorofSFT}
	Let $A$ be a finite set with at least two elements, $G$ a countable group and let $X\subseteq A^G$ be a subshift. Let $F\subseteq G$ be a finite subset. Then for the partition \[\PP:=\{C_p\subseteq X\colon p\in X_F\},\] the map $Q^\sigma_\PP$, where $\sigma$ is the shift action on $X$, is one-to-one.
\end{lemma}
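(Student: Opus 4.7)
The plan is to show directly that the map $Q^\sigma_\PP$ admits a coordinate-by-coordinate inverse, i.e.\ that from the array $(Q^\sigma_\PP(x)(g))_{g\in G}$ one can reconstruct each value $x(g')$, $g'\in G$. I would first dispose of the trivial edge case: assume $F$ is non-empty (otherwise $X_F$ has a single element, $\PP$ is trivial, and the statement is vacuous unless $|X|\le 1$), and fix once and for all some reference element $h_0\in F$.

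The key step is unfolding the definition of $Q^\sigma_\PP$. By construction, $Q^\sigma_\PP(x)(g)$ is the unique atom $C_p$ of $\PP$ containing $\sigma(g^{-1})x$; by definition of $C_p$ this is equivalent to saying $(\sigma(g^{-1})x)\upharpoonright F=p$. Combining with the identity $\sigma(g^{-1})x(h)=x(gh)$ for $h\in F$, I obtain the reading rule
\[Q^\sigma_\PP(x)(g)=C_p \quad\Longleftrightarrow\quad x(gh)=p(h)\text{ for every }h\in F,\]
which expresses that the $g$-th coordinate of $Q^\sigma_\PP(x)$ records the pattern of $x$ on the translate $gF$.

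Putting the pieces together, for any $g'\in G$ I set $g:=g'h_0^{-1}$; then the atom $Q^\sigma_\PP(x)(g'h_0^{-1})=C_p$ satisfies $x(g')=x(gh_0)=p(h_0)$, so the single value $Q^\sigma_\PP(x)(g'h_0^{-1})$ determines $x(g')$. Consequently, if $x\ne y$ in $X$ and $g'\in G$ is any coordinate where they differ, then $Q^\sigma_\PP(x)(g'h_0^{-1})\ne Q^\sigma_\PP(y)(g'h_0^{-1})$, establishing injectivity. I do not anticipate a real obstacle here: the argument is pure unpacking of definitions, and the only mildly subtle point is the implicit non-emptiness of $F$, which is indispensable because the reconstruction formula needs at least one $h_0\in F$ to read off a coordinate of $x$.
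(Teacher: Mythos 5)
Your proof is correct and is essentially the paper's own argument: the paper likewise observes that if $x\neq y$ then some shift $\sigma(g)x$, $\sigma(g)y$ land in different atoms of $\PP$, which is exactly your reading rule applied at the coordinate $g'h_0^{-1}$. Your explicit reconstruction formula and the remark that $F$ must be non-empty are just a more detailed unpacking of the same one-line observation.
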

\begin{proof}
	The image is clearly a closed $G$-invariant subset of $\PP^G$, i.e. a subshift, so we only need to check the map is injective. This is however obvious since if $x\neq y\in X$ then there is $g\in G$ such that $\sigma(g)x$ and $\sigma(g)y$ lie in different elements of the partition $\PP$ and thus $Q^\sigma_\PP(x)(g^{-1})\neq Q^\sigma_\PP(y)(g^{-1})$.
\end{proof}
We continue with two propositions connecting the spaces and topologies of $\Act_G(\CC)$ and $\SH_G(n)$ which will be instrumental in proving the main theorem.
\begin{proposition}\label{prop:Qcontinuity}
Let $G$ be a countable group and $\PP$ a partition of $\CC$ into disjoint non-empty clopen sets. Then the map \[\QQ(\cdot,\PP):\Act_G(\CC)\to \SH_G(\PP)\] defined as \[\alpha\to \QQ(\alpha,\PP)\] is continuous and onto the set $\NN_{\PP^G}^{\{1_G\}}=\{X\in\SH_G(\PP)\colon \forall P\in\PP\;\exists x\in X\;(x(1_G)=P)\}$ consisting of all subshifts of $\PP^G$ that contain each letter from $\PP$.
\end{proposition}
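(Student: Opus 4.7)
I would split the statement into three parts: (i) continuity of $\QQ(\cdot,\PP)$, (ii) the image is contained in $\NN_{\PP^G}^{\{1_G\}}$, and (iii) the image covers all of $\NN_{\PP^G}^{\{1_G\}}$. The first two parts are direct verifications from the definitions; the genuine content is the surjectivity, which requires building an action out of a given subshift.

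\textbf{Continuity.} I would work with the explicit basic open neighborhoods from Lemma~\ref{lem:basicopennbhds} and from the lemma describing $\NN_X^F\subseteq\SH_G(\PP)$. Fix $\alpha\in\Act_G(\CC)$ and a basic neighborhood $\NN_{\QQ(\alpha,\PP)}^F$ with $F\subseteq G$ finite. By definition, for any $x\in\CC$ and $g\in F$, the value $Q^\alpha_\PP(x)(g)$ is the unique $P\in\PP$ with $\alpha(g^{-1})x\in P$. Hence if $\beta\in\NN_\alpha^{F^{-1},\PP}$ then $Q^\beta_\PP(x)\upharpoonright F=Q^\alpha_\PP(x)\upharpoonright F$ for every $x\in\CC$, so $\QQ(\beta,\PP)_F=\QQ(\alpha,\PP)_F$ and thus $\QQ(\beta,\PP)\in\NN_{\QQ(\alpha,\PP)}^F$. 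This shows continuity.

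\textbf{Image lands in $\NN_{\PP^G}^{\{1_G\}}$.} For every $P\in\PP$ pick any $x\in P$ (which is non-empty by hypothesis on $\PP$); since $\alpha(1_G^{-1})x=x\in P$, we have $Q^\alpha_\PP(x)(1_G)=P$, so $\QQ(\alpha,\PP)$ contains a point with first coordinate $P$.

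\textbf{Surjectivity.} This is the main step. Fix $X\in\NN_{\PP^G}^{\{1_G\}}$; I would realise $X$ as $\QQ(\alpha,\PP)$ for some $\alpha$ constructed as follows. Consider the product space $X\times\CC$ with the action of $G$ by shift on the first coordinate and trivially on the second, and equip it with the clopen partition $\QQ':=\{C_P(X)\times\CC\colon P\in\PP\}$. A direct unwinding of the definition shows that for $(x,c)\in X\times\CC$ and $g\in G$ one has $Q^{\sigma\times\mathrm{id}}_{\QQ'}(x,c)(g)=x(g)$, so the resulting subshift of $\PP^G$ is exactly $X$.

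It remains to transfer this picture to $\CC$ so that the partition $\QQ'$ is identified with the prescribed partition $\PP$. Since $X$ is compact, metrizable, zero-dimensional and $\CC$ is perfect, the product $X\times\CC$ is homeomorphic to $\CC$. More is true: for each $P\in\PP$ the set $C_P(X)\times\CC$ is a non-empty clopen subset of $X\times\CC$ that is perfect, compact, metrizable and zero-dimensional, hence homeomorphic to $\CC$ by Brouwer's characterisation; similarly each $P\in\PP$ is such a set. Thus I can choose homeomorphisms $h_P\colon P\to C_P(X)\times\CC$ and glue them into a single homeomorphism $h\colon\CC\to X\times\CC$ sending $P$ onto $C_P(X)\times\CC$ for every $P\in\PP$. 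Pulling back the product action along $h$, i.e.\ $\alpha(g):=h^{-1}\circ(\sigma(g)\times\mathrm{id})\circ h$, yields $\alpha\in\Act_G(\CC)$ with $\QQ(\alpha,\PP)=X$ by construction.

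\textbf{Expected obstacle.} The only step that is not pure bookkeeping is the surjectivity, and within it the only care needed is the matching of partitions via Brouwer's characterisation; everything else is a direct unwinding of definitions.
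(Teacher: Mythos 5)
Your proposal is correct and follows essentially the same route as the paper: continuity is verified on the basic neighborhoods $\NN_\alpha^{F,\PP}$ and $\NN_X^F$ exactly as in the paper (your use of $F^{-1}$ in place of the paper's standing symmetry assumption on $F$ is immaterial), and surjectivity is obtained by the same device of acting by $\sigma\times\mathrm{id}$ on $X\times\CC$ and transporting the action to $\CC$ via a homeomorphism matching the clopen pieces $C_P(X)\times\CC$ with the elements $P\in\PP$, the factor $\CC$ being inserted precisely to make each piece perfect so that Brouwer's characterisation applies. No gaps.
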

\begin{proof}
We fix $G$ and $\PP$ as in the statement. Take $\alpha\in\Act_G(\CC)$ and set $X:=\QQ(\alpha,\PP)$. Let $U$ be an open neighborhood of $X$ which we may assume is of the form $\NN_X^F$, for some finite symmetric set $F\subseteq G$, which, without loss of generality, contains the unit. We shall find an open neighborhood $V$ of $\alpha$ so that $\QQ(\cdot,\PP)[V]\subseteq U$. We claim that $V:=\NN_\alpha^{F,\PP}$ is such a neighborhood. Indeed, pick $\beta\in V$ and set $Y:=\QQ(\beta,\PP)$. Let us show that $Y\in U=\NN_X^F$, i.e. $Y_F=X_F$. Let $p\in X_F$ be a pattern allowed in $X$, so there is $x\in X$ such that $x\upharpoonright F=p$. Let $z\in\CC$ be an arbitrary element such that $Q^\alpha_\PP(z)=x$. By the definition of the neighborhood $V=\NN_\alpha^{F,\PP}$ \[\forall f\in F\;\forall P\in\PP\; \big(\alpha(f)z\in P\Leftrightarrow \beta(f)z\in P\big).\] It follows that for $y:=Q^\beta_\PP(z)\in Y$ we have \[p=x\upharpoonright F=y\upharpoonright F,\] thus $p$ is allowed in $Y$ as well. We proved that $X_F\subseteq Y_F$, the other direction is proved symmetrically. This finishes the proof that $\QQ(\cdot,\PP)$ is continuous.

Let us show that the map is onto $\NN_{\PP^G}^{\{1_G\}}$. Pick $X\in \NN_{\PP^G}^{\{1_G\}}$ and and for each $P\in\PP$, let $\psi_P:C_{\{1_G,P\}}(X)\times\CC\rightarrow P$ be a homeomorphism. Here $C_{\{1_G,P\}}(X):=\{x\in X\colon x(1_G)=P\}$, which corresponds to the notation $C_p(X)$, where $p=\{1_G,P\}\in \PP^{\{1_G\}}$. We take the product of $C_{\{1_G,P\}}(X)$ with $\CC$ to ensure that it has no isolated points. 

Since $\{C_{\{1_G,P\}}(X)\times\CC\colon P\in\PP\}$ is a clopen partition of $X\times\CC$, it follows that $\psi:=\coprod_{P\in\PP} \psi_P: X\times\CC\rightarrow\CC$ is a homeomorphism. We define $\gamma:=\psi\circ\big(\sigma\times\mathrm{Id}\big)\circ\psi^{-1}\in\Act_G(\CC)$, where $\sigma\times\mathrm{Id}$ is the action of $G$ on $X\times\CC$, which acts as the shift on the first coordinate and as the identity on the other. We claim that $\QQ(\gamma,\PP)=X$. This follows since $\psi$ is a homeomorphism and for every $x\in X$ and $y,z\in\CC$ such that $z=\psi(x,y)$, and for every $g\in G$ and $P\in\PP$ we have \[\begin{split}Q_\PP^\gamma(z)(g)=P& \Leftrightarrow \gamma(g^{-1})z\in P\\ &\Leftrightarrow \big(\sigma(g^{-1})\times\mathrm{Id}\big)(x,y)\in C_{\{1_G,P\}}(X)\times\CC\Leftrightarrow x(g)=P.\end{split}\]
\end{proof}

\begin{proposition}\label{prop:Qmap-nbhds}
Let $G$ be a countable group and $\PP'\preceq\PP$ two clopen partitions of $\CC$, one refining the other. 
\begin{enumerate}
	\item\label{it1-Qmap-nbhds} For every $\alpha\in\Act_G(\CC)$, finite symmetric set $F\subseteq G$ containing $1_G$ and $X\in \QQ(\cdot,\PP)[\NN_\alpha^{F,\PP}]$ we have \[\QQ(\cdot,\PP)[\NN_\alpha^{F,\PP}]=\NN_X^F.\]
	
	\item\label{it2-Qmap-nbhds} For every $\alpha\in\Act_G(\CC)$ we have $\QQ(\alpha,\PP)=\phi\big(\QQ(\alpha,\PP')\big)$, where $\phi:(\PP')^G\rightarrow \PP^G$ is the map induced by the inclusion map $\iota:\PP'\rightarrow\PP$, i.e. satisfying $P\subseteq \iota(P)$ for every $P\in\PP'$. 
	
	\item\label{it3-Qmap-nbhds} For every $\alpha\in\Act_G(\CC)$, a finite symmetric set $F\subseteq G$ containing $1_G$, and a subshift $Y\subseteq (\PP')^G$ such that $\phi[Y]=\QQ(\alpha,\PP)$, where $\phi:(\PP')^G\to \PP^G$ is a map induced by the inclusion map $\iota:\PP'\to\PP$, there exists $\beta\in\NN_\alpha^{F,\PP}$ such that $\QQ(\beta,\PP')=Y$.
\end{enumerate}
\end{proposition}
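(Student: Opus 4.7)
The three parts are of increasing depth: (2) is a pointwise identity, (1) refines the surjectivity argument from Proposition~\ref{prop:Qcontinuity}, and (3) refines the same construction one step further, which I expect to be the main content.

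For (2), the plan is to verify the pointwise identity $Q^\alpha_\PP = \phi\circ Q^\alpha_{\PP'}$. For $z\in\CC$ and $g\in G$, if $\alpha(g^{-1})z$ lies in the $\PP'$-cell $P'$, then by $P'\subseteq\iota(P')\in\PP$ it also lies in $\iota(P')$, so $Q^\alpha_\PP(z)(g)=\iota(Q^\alpha_{\PP'}(z)(g))$. Taking images in $\PP^G$ yields $\QQ(\alpha,\PP)=\phi[\QQ(\alpha,\PP')]$.

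For (1), the inclusion $\QQ(\cdot,\PP)[\NN_\alpha^{F,\PP}]\subseteq\NN_X^F$ is built into the continuity argument from Proposition~\ref{prop:Qcontinuity}: any $\beta\in\NN_\alpha^{F,\PP}$ has $\QQ(\beta,\PP)_F=\QQ(\alpha,\PP)_F$, and this common value is $X_F$ since $X$ itself lies in the image of $\NN_\alpha^{F,\PP}$. For the reverse inclusion I would refine the surjectivity half of Proposition~\ref{prop:Qcontinuity}. Given $Y\in\NN_X^F$, partition $\CC$ into the $\alpha$-$F$-$\PP$-pattern cells $\{P_p^\alpha:p\in X_F\}$, where $P_p^\alpha:=\{z\in\CC:Q^\alpha_\PP(z)\upharpoonright F = p\}$, and partition $Y$ into its $F$-cylinders $\{C_p(Y):p\in Y_F\}$. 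Since $X_F=Y_F$, pick homeomorphisms $\psi_p:C_p(Y)\times\CC\to P_p^\alpha$ between these Cantor spaces, assemble them into a homeomorphism $\psi:Y\times\CC\to\CC$, and set $\gamma:=\psi\circ(\sigma\times\mathrm{Id})\circ\psi^{-1}$. The verification $\QQ(\gamma,\PP)=Y$ mimics Proposition~\ref{prop:Qcontinuity}, and the fact that $\psi$ sends $C_p(Y)\times\CC$ inside $P_p^\alpha$ forces $\gamma\in\NN_\alpha^{F,\PP}$.

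For (3), the construction is analogous but parametrized by $q\in Y_F\subseteq(\PP')^F$. Let $\tilde\phi:(\PP')^F\to\PP^F$ denote the pointwise application of $\iota$. For each $q\in Y_F$ set the target cell $R_q:=P_{\tilde\phi(q)}^\alpha\cap q(1_G)\subseteq\CC$, pick homeomorphisms $\psi_q:C_q(Y)\times\CC\to R_q$, assemble $\psi$, and set $\beta:=\psi\circ(\sigma\times\mathrm{Id})\circ\psi^{-1}$. The inclusion $\psi(C_q(Y)\times\CC)\subseteq P_{\tilde\phi(q)}^\alpha$ will force $\beta\in\NN_\alpha^{F,\PP}$, while $\psi(C_q(Y)\times\CC)\subseteq q(1_G)\in\PP'$ will force $\QQ(\beta,\PP')=Y$ by reading off the $Y$-orbit of $\sigma\times\mathrm{Id}$ pointwise. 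The main obstacle I expect is showing that the $R_q$'s form a genuine clopen partition of $\CC$: each needed $R_q$ must be non-empty, and together they must tile $\CC$. This matching step is where the hypothesis $\phi[Y]=\QQ(\alpha,\PP)$ is crucial, combined with the fact that every non-empty $\PP'$-cell is automatically represented at $1_G$ in the $\PP'$-trace of any action, in order to pair every realized $(p,P')\in X_F\times\PP'$ with $\iota(P')=p(1_G)$ and $P_p^\alpha\cap P'\neq\emptyset$ with some $q\in Y_F$ satisfying $\tilde\phi(q)=p$ and $q(1_G)=P'$.
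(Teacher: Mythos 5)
Your treatments of \eqref{it2-Qmap-nbhds} and \eqref{it1-Qmap-nbhds} are correct and essentially identical to the paper's: \eqref{it2-Qmap-nbhds} is the same pointwise computation, and in \eqref{it1-Qmap-nbhds} your cells $P_p^\alpha$ are exactly the paper's $R_p$, and the assembled homeomorphism $\psi:Y\times\CC\to\CC$ with $\beta=\psi\circ(\sigma\times\mathrm{Id})\circ\psi^{-1}$ is the same construction.

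In \eqref{it3-Qmap-nbhds}, however, the construction as written breaks at the assembly step. Your target cell $R_q:=P_{\tilde\phi(q)}^\alpha\cap q(1_G)$ depends on $q\in Y_F$ only through the pair $(\tilde\phi(q),q(1_G))$, and this pair does not determine $q$: two patterns $q\neq q'\in Y_F$ may differ at some $f\in F\setminus\{1_G\}$ by two $\PP'$-letters lying in the same $\PP$-cell, in which case $\tilde\phi(q)=\tilde\phi(q')$ and $q(1_G)=q'(1_G)$, hence $R_q=R_{q'}$, while $C_q(Y)$ and $C_{q'}(Y)$ are disjoint and non-empty. So $\{R_q\colon q\in Y_F\}$ is not a partition of $\CC$ --- the failure is not only the non-emptiness and covering that you flag, but disjointness of the indexed family --- and the union of the $\psi_q$'s is then a two-to-one map rather than a homeomorphism $Y\times\CC\to\CC$. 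The paper avoids this by indexing the target cells by the pairs themselves, $\{R_p\cap P\colon p\in X_F,\ P\in\PP',\ R_p\cap P\neq\emptyset\}$, and coarsening the domain partition of $Y$ accordingly to $D_p\cap C_P=\bigcup\{C_q(Y)\colon \tilde\phi(q)=p,\ q(1_G)=P\}$; alternatively you could keep the index set $Y_F$ but subdivide each $R_p\cap P$ into as many non-empty clopen pieces as there are $q\in Y_F$ mapping to $(p,P)$. Either repair preserves the two inclusions $\psi\big(C_q(Y)\times\CC\big)\subseteq P_{\tilde\phi(q)}^\alpha$ and $\psi\big(C_q(Y)\times\CC\big)\subseteq q(1_G)$ on which your verification of $\beta\in\NN_\alpha^{F,\PP}$ and $\QQ(\beta,\PP')=Y$ rests, and the matching condition you correctly identify as the crux (the needed cells of $\CC$ being non-empty exactly when the corresponding cells of $Y$ are) is the same one the paper uses $\phi[Y]=\QQ(\alpha,\PP)$ for.
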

\begin{proof}
Let us fix $G$ and $\PP'\preceq\PP$ as in the statement.\medskip

We first prove \eqref{it1-Qmap-nbhds}. Fix additionally $\alpha$, $F\subseteq G$ and $X$ as in the statement. Since for every $Y\in\NN_X^F$ we have $\NN_Y^F=\NN_X^F$, we can without loss of generality assume that $X=\QQ(\alpha,\PP)$. We first prove that $\QQ(\cdot,\PP)[\NN_\alpha^{F,\PP}]\subseteq \NN_X^F$. Pick $\beta\in\NN_\alpha^{F,\PP}$ and set $Y:=\QQ(\beta,\PP)$. We need to prove that $X_F=Y_F$. Take some $p\in X_F$ and let $x\in\CC$ be such that for every $f\in F$ and $P\in\PP$ \[\alpha(f^{-1})x\in P\Leftrightarrow p(f)=P,\] thus $Q_\PP^\alpha(x)\upharpoonright F=p$. Since by definition for every $f\in F$ and $P\in\PP$ we have \[\alpha(f^{-1})x\in P\Leftrightarrow \beta(f^{-1})x\in P,\] it follows that $Q_\PP^\beta(x)\upharpoonright F=p$ as well. We showed that $X_F\subseteq Y_F$, the inclusion $Y_F\subseteq X_F$ is proved symetrically.

Now we prove the reverse inclusion $\NN_X^F\subseteq \QQ(\cdot,\PP)[\NN_\alpha^{F,\PP}]$. Take any $Y\in\NN_X^F$. We define a clopen partition $\PP'':=\{R_p\colon p\in X_F\}\preceq\PP$, where for $p\in X_F$, \[R_p:=\{x\in\CC\colon \forall f\in F\; \big(\alpha(f^{-1})(x)\in P\;\text{ if and only if }\; p(f)=P\big)\}.\] For each $p\in X_F$, since $Y_F=X_F$, we have that $C_p(Y)\neq\emptyset$ and let $\psi_p:C_p(Y)\times\CC\rightarrow R_p$ be a fixed homeomorphism and set $\psi:Y\times\CC\rightarrow \CC$ to be the homeomorphism $\coprod_{p\in X_F} \psi_p$. We define the action $\beta\in\Act_G(\CC)$ by setting for $g\in G$ and $x\in \CC$ \[\beta(g)(x):=\psi\circ\big(\sigma(g)\times\mathrm{Id}\big)\circ\psi^{-1}(x).\]

The verification that $\QQ(\beta,\PP)=Y$ is straightforward and similar as in the proof of Proposition~\ref{prop:Qcontinuity}. We check that $\beta\in \NN_\alpha^{F,\PP}$. Pick $x\in\CC$, $f\in F$, and $P\in\PP$. We need to verify that $\alpha(f^{-1})x\in P$ if and only if $\beta(f^{-1})x\in P$. Without loss of generality, assume that $\alpha(f^{-1})x\in P$ and it suffices now to verify that $\beta(f^{-1})x\in P$. Let $p\in X_F$ be such that $x\in R_p$. Then by definition, $p(f)=P$ and we verify that $\beta(f^{-1})x=\psi\circ (\sigma(f^{-1})\times\mathrm{Id})\circ\psi^{-1}(x)\in P$. We have $\psi^{-1}(x)\in C_p(Y)\times \CC$ and so $(\sigma(f^{-1})\times\mathrm{Id})\circ\psi^{-1}(x)\in C_{p'}(Y)\times\CC$, for some $p'\in X_F$ where $p'(1_G)=P$. Consequently, $\beta(f^{-1})x\in R_{p'}$, thus, by the definition of $R_{p'}$, \[\beta(f^{-1})x=\alpha(1^{-1}_G)\big(\beta(f^{-1})x\big)\in p'(1_G)=P,\] which is what we were supposed to show.\medskip

We now prove \eqref{it2-Qmap-nbhds}. Fix $\alpha\in\Act_G(\CC)$ and consider the map $\phi:(\PP')^G\rightarrow\PP^G$ as in the statement. Set $X:=\QQ(\alpha,\PP')$ and $Y:=\QQ(\alpha,\PP)$. We need to show that $\phi[X]=Y$. Pick an arbitrary $z\in\CC$ and set $x:=Q_{\PP'}^\alpha(z)\in X$, $y:=Q_\PP^\alpha(z)\in Y$. Now let $g\in G$ and we check that $\phi(x)(g)=y(g)$, which will finish the proof. For $P\in\PP$ we have \[\begin{split}y(g)=P & \Leftrightarrow \alpha(g^{-1})z\in P\Leftrightarrow \exists P'\in\PP'\;\big( P'\subseteq P\wedge \alpha(g^{-1})z\in P'\big)\\ & \Leftrightarrow x(g)=P'\Leftrightarrow \phi(x)(g)=P,\end{split}\] which shows the equality.\medskip

Finally, we prove \eqref{it3-Qmap-nbhds}. Fix $\alpha$, $F\subseteq G$, and $Y$ as in the statement. Set $X:=\QQ(\alpha,\PP)$. The proof is similar to the proof of \eqref{it1-Qmap-nbhds}. We again define a clopen partition $\PP'':=\{R_p\colon p\in X_F\}\preceq\PP$, where for $p\in X_F$, \[R_p:=\{x\in\CC\colon \forall f\in F\; \big(\alpha(f^{-1})(x)\in P\;\text{ if and only if }\; p(f)=P\big)\}.\] Consider now the clopen partition refining $\PP'$ and $\PP''$, i.e. the partition $\{R_p\cap P\colon p\in X_F, P\in\PP', R_p\cap P\neq\emptyset\}$. For every $p\in X_F$ and $P\in\PP'$ we also define \[D_p:=\big\{y\in Y\colon \forall f\in F\;\big(\phi_0(y(f))=p(f)\big)\big\}\;\text{and}\; C_P:=\{y\in Y\colon y(1_G)=P\}.\] Since $\phi[Y]=X$ we get that for each $p\in X_F$, $D_p\neq\emptyset$, and $\{D_p\colon p\in X_F\}$ is a clopen partition of $Y$. Consequently, $\{D_p\cap C_P\colon p\in X_F,P\in\PP',R_p\cap P\neq\emptyset\}$ is a clopen partition of $Y$ as well. As in the proof of \eqref{it1-Qmap-nbhds}, for every $p\in X_F$ and $P\in\PP'$ such that $R_p\cap P\neq\emptyset$, let $\psi_{p,P}: (D_p\cap C_P)\times\CC\to R_p\cap P$ be a fixed homeomorphism. We set $\psi:Y\times\CC\to\CC$ to be the homeomorphism \[\coprod_{p\in X_F,P\in\PP',R_p\cap P\neq\emptyset}\psi_p\] and we define the action $\beta\in\Act_G(\CC)$ by setting for $g\in G$ and $x\in \CC$ \[\beta(g)(x):=\psi\circ\big(\sigma(g)\times\mathrm{Id}\big)\circ\psi^{-1}(x).\] The verification that $\beta\in\NN_\alpha^{F,\PP}$ is as in the proof of \eqref{it1-Qmap-nbhds}. To check that $\QQ(\beta,\PP')=Y$, notice that for every $x\in\CC$, $g\in G$, and $P\in\PP'$ we have $\beta(g^{-1})(x)\in P$ if and only if $y(g)=P$, where $\psi(y,z)=x$ for some $z\in\CC$. Thus $Q_{\PP'}^\beta(x)=y$ and by definition, $y\in Y$. It follows that $\QQ(\beta,\PP')\subseteq Y$. Conversely, for every $y\in Y$, pick any $z\in\CC$ and set $x:=\psi(y,z)$. It is straightforward and left to the reader that $Q_{\PP'}^\beta(x)=y$, thus $Y\subseteq \QQ(\beta,\PP')$. 
\end{proof}

\subsection{Isolated subshifts, subshifts of finite type, and sofic subshifts}
\begin{definition}\label{def:SFT}
	Let $G$ be a countable group and $A$ be a finite set with at least two elements. A subshift $X\subseteq A^G$ is \emph{of finite type}, shortly an \emph{SFT}, if there exists a finite set $F\subseteq G$, called the \emph{defining window} of $X$, and a set $\FF\subseteq A^F$ of patterns such that for $x\in A^G$ we have \[x\in X\;\text{ if and only if }\; \forall g\in G\; (gx\upharpoonright F\in\FF).\]
	The set $\FF$ is called \emph{the set of allowed patterns} for $X$, while its complement $F^A\setminus \FF$ is called \emph{the set of forbidden patterns} for $X$.
	
	We also say that a subshift $X\subseteq A^G$ is \emph{sofic} if it is a factor of a subshift of finite type (possibly defined over different finite set, i.e. subshift of $B^G$ for some non-trivial finite $B$).
\end{definition}

Subshifts of finite type play also a prominent role in the topology of the space $\SH_G(A)$. The proof of the following simple but useful lemma is left to the reader.
\begin{lemma}\label{lem:SFTnbhrds}
Let $G$ be a countable group and $A$ be a non-trivial finite set. Then the subshifts of finite type are dense in $\SH_G(A)$. Moreover, for every subshift of finite type $X\subseteq A^G$ there is a finite set $F\subseteq G$ so that all subshifts in the basic open neighborhood $\NN_X^F$ are subshifts of $X$.

In particular, every open set in $\SH_G(A)$ contains an open subset which has a subshift that is maximal with respect to inclusion, it is of finite type, and all other subshifts in the open set are its subshifts.
\end{lemma}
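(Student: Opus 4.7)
The plan is to prove the three assertions in sequence, each being a short verification once the right SFT is written down.

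For density, given a subshift $X\subseteq A^G$ and a basic open neighborhood $\NN_X^F$, I would explicitly construct an SFT $X'\in\NN_X^F$ by taking $F$ itself (enlarged to contain $1_G$ if necessary) as the defining window and declaring the set of allowed patterns to be precisely $X_F$. Two inclusions must be checked: on the one hand $X\subseteq X'$, because for every $x\in X$ and $g\in G$ the translate $gx$ again lies in $X$, so $gx\upharpoonright F\in X_F$; on the other hand $X'_F\subseteq X_F$, since any $x\in X'$ satisfies $x\upharpoonright F\in X_F$ by the defining condition at $g=1_G$. Combined these give $X'_F=X_F$, i.e.\ $X'\in\NN_X^F$.

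For the moreover clause, fix an SFT $X$ with defining window $F_0$ and set of allowed patterns $\FF\subseteq A^{F_0}$, and take $F:=F_0$. For any $Y\in\NN_X^F$ I have $Y_{F_0}=X_{F_0}\subseteq\FF$. Then for every $y\in Y$ and every $g\in G$, the translate $gy$ lies in $Y$, hence $gy\upharpoonright F_0\in Y_{F_0}\subseteq\FF$; thus $y$ satisfies the defining condition of $X$ and $Y\subseteq X$.

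The ``in particular'' assertion is then a direct combination: given a nonempty open set $U\subseteq\SH_G(A)$, density yields an SFT $X\in U$; applying the moreover clause to $X$ gives a finite set $F_0\subseteq G$ so that every $Y\in\NN_X^{F_0}$ is a subshift of $X$. Enlarging $F_0$ to some $F\supseteq F_0$ if necessary (using that $\NN_X^{F}$ is still clopen and contained in $\NN_X^{F_0}$) we can arrange $X\in\NN_X^F\subseteq U$, and $X$ is by construction the $\subseteq$-maximum of $\NN_X^F$.

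There is no real obstacle here; the only thing to watch for is the bookkeeping of finite sets, namely ensuring that $F$ contains $1_G$ in the density argument so that $X'_F\subseteq X_F$ follows, and shrinking the neighborhood in the last step so that it simultaneously sits inside $U$ and inside the neighborhood produced by the moreover clause.
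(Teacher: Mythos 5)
Your proof is correct, and since the paper explicitly leaves this lemma to the reader, your argument is exactly the intended one: approximate $X$ by the SFT with window $F$ and allowed patterns $X_F$, note that an SFT's own defining window already forces every subshift in the corresponding neighborhood to be contained in it, and combine the two. The bookkeeping you flag (that $\NN_X^F\subseteq\NN_X^{F'}$ for $F\supseteq F'$, so the neighborhood can be shrunk to sit inside $U$) is the only point needing care, and you handle it.
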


\begin{definition}\label{def:Rauzy}
Let $G$ be a countable group, $F\subseteq G\setminus\{1_G\}$ a finite subset, and $V$ a finite (vertex) set. We call the collection $\VV_F=(V,(E_f)_{f\in F})$ \emph{an $F$-Rauzy graph} if for each $f\in F$, $(V,E_F)$ is a directed graph with no sources and no sinks. That is, $E_f$ is a set of oriented edges between the vertices $V$ such that for every $v\in V$ there are at least one incoming and one outgoing edge to $v$, resp. from $v$.

Having $\VV_F$ as above we define a subshift $X_{\VV_F}\subseteq V^G$ as follows. For $x\in V^G$ we set \[x\in X_{\VV_F}\;\text{ if and only if }\; \forall g\in G\;\forall f\in F\;\big((x(g),x(gf))\in E_f\big).\]
It is clear that $X_{\VV_F}$ is of finite type.
\end{definition}

Notice that alternatively and equivalently one can require to have the edge set $E_f$ for all $f\in G$ with the requirement that for all but finitely many $f\in G$, $E_f$ is a complete directed graph on $V$.\medskip

Conversely, it is well-known fact (at least in case $G=\Int$) that any SFT $X\subseteq A^G$, for any $G$ and $A$, is conjugate to an SFT of the form $X_{\VV_F}$ for some $F$-Rauzy graph. We refer to \cite[Proposition 1.6]{Bar17} for the proof of the following proposition.
\begin{proposition}\label{prop:SFTprop}
Let $G$ be a countable group and $A$ a finite set with at least two elements. Let $X\subseteq A^G$ be a subshift of finite type whose defining window is a finite set $F\subseteq G$. Let $S$ be a finite symmetric set (not containing $1_G$) such that $\langle F\rangle=\langle S\rangle\leq G$. Then there exists an $S$-Rauzy graph $\VV_S$, 
such that $X$ is conjugate to $X_{\VV_S}$.
\end{proposition}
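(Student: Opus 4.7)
My approach is the standard higher-block recoding, adapted to the setting of a general countable group acting on an SFT. Let $H := \langle F \rangle = \langle S \rangle$. First I would fix an integer $r \geq 1$ large enough that $F \cup \{1_G\} \subseteq B_S(r)$, where $B_S(r) \subseteq H$ denotes the Cayley ball of radius $r$ with respect to the generating set $S$; such $r$ exists because $S$ generates $H$ and $F$ is finite. Set $W := B_S(r)$, let $V := X_W$ be the set of allowed $W$-patterns of $X$, and for each $s \in S$ declare
\[
(p,q) \in E_s \iff p(w) = q(s^{-1}w) \text{ for every } w \in W \cap sW.
\]
This is the natural consistency condition on the overlap $W \cap sW$.

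The second step is to verify that $\VV_S := (V, (E_s)_{s \in S})$ is an $S$-Rauzy graph. Given $p \in V$, pick $x \in X$ with $x\upharpoonright W = p$; then $s^{-1}x \in X$ yields a vertex $q := (s^{-1}x)\upharpoonright W \in V$ forming an outgoing $s$-edge from $p$, and incoming edges arise symmetrically. I then define the candidate conjugacy $\Phi: X \to V^G$ by $\Phi(x)(g) := (g^{-1}x)\upharpoonright W$, which is manifestly continuous and $G$-equivariant. A brief computation gives $(\Phi(x)(g), \Phi(x)(gs)) \in E_s$ for all $g \in G$, $s \in S$, so $\Phi$ lands in $X_{\VV_S}$. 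Injectivity is immediate from $1_G \in W$, since $x(g) = \Phi(x)(g)(1_G)$.

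The heart of the argument, and the main obstacle, is showing $\Phi(X) = X_{\VV_S}$. Given $y \in X_{\VV_S}$ I would define $x \in A^G$ by $x(g) := y(g)(1_G)$ and prove by induction on the $S$-word length of $w$ the cocycle identity
\[
y(gw)(1_G) = y(g)(w), \qquad \text{for all } g \in G,\, w \in W.
\]
For the inductive step with $w$ of length $k \geq 1$, write $w = s_1 s_2 \cdots s_k$ with $s_i \in S$. The crucial fact is that because $W$ is a Cayley ball of radius $r \geq k$, every suffix $u_i := s_i s_{i+1} \cdots s_k$ still lies in $W$. Applying $(y(g), y(gs_1)) \in E_{s_1}$ at the element $w \in W \cap s_1 W$ (whose preimage $s_1^{-1}w = u_2$ belongs to $W$) yields $y(g)(w) = y(gs_1)(u_2)$, and the inductive hypothesis applied to $u_2$ then gives $y(gs_1)(u_2) = y(gw)(1_G)$. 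The choice of $W$ as a Cayley ball is precisely what makes this induction work: an arbitrary finite $W \supseteq F$ would fail to be suffix-closed under $S$-decompositions, and the chain of edge conditions could not be composed.

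Once the cocycle identity is established, $x(gw) = y(gw)(1_G) = y(g)(w)$ for every $w \in W$, so $(g^{-1}x)\upharpoonright W = y(g)$. This yields simultaneously $\Phi(x) = y$ and $x \in X$: the latter holds because $X$ coincides with the SFT on window $W$ with allowed patterns $X_W$ (as $F \subseteq W$), and each $y(g) \in V = X_W$. Consequently $\Phi: X \to X_{\VV_S}$ is a continuous $G$-equivariant bijection between compact Hausdorff spaces, hence a topological conjugacy, which completes the proof.
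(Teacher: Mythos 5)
Your proof is correct and follows essentially the same route as the paper's source for this statement: the paper defers to Barbieri's Proposition 1.6, whose argument is exactly this higher-block recoding on a Cayley ball $W=B_S(r)$ containing the defining window, with vertices $X_W$, overlap-consistency edges, and the suffix-closedness of $W$ driving the cocycle induction for surjectivity. Your construction also recovers the remark following the proposition, since the inverse conjugacy is induced by the alphabet map $p\mapsto p(1_G)$ from $V$ to $A$.
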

We remark that the isomorphism between $X_{\VV_S}$ and $X$ may be by construction chosen so that it is induced by a map $f:V\to A$, where $V$ is the vertex set of $\VV_S$.\medskip

A special subclass of subshifts of finite type will be of crucial importance.
\begin{definition}\label{def:isolatedsubshift}
	Let $G$ and $A$ be as in Definition~\ref{def:SFT}. A subshift $X\in\SH_G(A)$ is called \emph{isolated} if any of the following equivalent conditions hold:
	\begin{enumerate}
		\item $X$ is isolated in the topology of $\SH_G(A)$.
		\item\label{def:isolatedsubshift2} $X$ is of finite type and there exists a finite set $F\subseteq G$ such that there is no proper subshift $Y\subsetneq X$ satisfying $Y_F=X_F$.
	\end{enumerate}
\end{definition}

\begin{remark}
Notice that the implication from (1) to (2) follows from Lemma~\ref{lem:SFTnbhrds}. The same lemma implies that if $X$ is not isolated and is of finite type there exist a finite set $F\subseteq G$ and $Y\in\NN_X^F\setminus\{X\}$ such that $Y\subseteq X$, thus $Y\subsetneq X$ while $Y_F=X_F$, hereby showing the implication from (2) to (1).

Notice also that every minimal subshift of finite type is isolated.
\end{remark}
\begin{lemma}\label{lem:isolatedcharacterization}
Let $A$ be a finite set having at least two elements and $G$ a countable group. A subshift $X\subseteq A^G$ is isolated if and only if it is of finite type and there exists a partition $\PP$ of $X$ into disjoint non-empty clopen sets such that there is no proper non-empty subshift $Y\subseteq X$ which intersects every element of $\PP$.
\end{lemma}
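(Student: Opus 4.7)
The plan is to prove both directions by a natural translation between the combinatorial characterization of isolation (via $F$-patterns) and the topological characterization (via clopen partitions), exploiting the fact that any clopen subset of a subshift $X \subseteq A^G$ is determined by restricting configurations to a sufficiently large finite window.

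For the forward direction, assume $X$ is isolated. Then by Definition~\ref{def:isolatedsubshift}(\ref{def:isolatedsubshift2}), $X$ is of finite type and there is a finite $F \subseteq G$ with the property that no proper subshift $Y \subsetneq X$ satisfies $Y_F = X_F$. I would take the canonical partition $\PP := \{C_p(X) : p \in X_F\}$, which is a clopen partition of $X$ into non-empty pieces by definition of $X_F$. Suppose $Y \subseteq X$ is a non-empty subshift intersecting every element of $\PP$. Then for each $p \in X_F$ there exists $y \in Y$ with $y\upharpoonright F = p$, whence $X_F \subseteq Y_F$; combined with $Y_F \subseteq X_F$ (from $Y \subseteq X$), this gives $Y_F = X_F$, forcing $Y = X$ by isolation.

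For the reverse direction, assume $X$ is of finite type with defining window $F_0$, and $\PP = \{P_1,\ldots,P_k\}$ is a partition satisfying the stated property. Since each $P_i$ is clopen in $X \subseteq A^G$, by compactness and the fact that cylinders form a basis, there exists a finite set $F \supseteq F_0$ and subsets $S_{P_i} \subseteq X_F$ such that $P_i = \bigcup_{p \in S_{P_i}} C_p(X)$, with $\{S_{P_i}\}_{i \leq k}$ forming a partition of $X_F$. Each $S_{P_i}$ is non-empty because $P_i \neq \emptyset$. Now let $Y \subseteq X$ be any subshift with $Y_F = X_F$. For each $i \leq k$, pick any $p \in S_{P_i}$; then $p \in Y_F$ yields some $y \in Y$ with $y \upharpoonright F = p$, so $y \in C_p(X) \subseteq P_i$, proving $Y \cap P_i \neq \emptyset$. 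By the assumed property of $\PP$, we conclude $Y = X$, so $X$ is isolated by Definition~\ref{def:isolatedsubshift}(\ref{def:isolatedsubshift2}).

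I do not expect a significant obstacle here: the only mildly technical point is observing that clopen subsets of $X$ are determined by configurations on a large enough finite window, which is immediate from compactness and the product topology. The proof is essentially a bookkeeping exercise translating between the language of $F$-patterns and the language of clopen partitions, with the partition $\{C_p(X) : p \in X_F\}$ serving as the bridge in one direction and the finite-window refinement of an arbitrary clopen partition serving as the bridge in the other.
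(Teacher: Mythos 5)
Your proof is correct and follows essentially the same route as the paper: in the forward direction you take the cylinder partition $\{C_p(X)\colon p\in X_F\}$ for a witnessing window $F$, and in the reverse direction you refine the given clopen partition to a cylinder partition over a large enough finite window and translate back to the pattern condition of Definition~\ref{def:isolatedsubshift}\eqref{def:isolatedsubshift2}. The only difference is that you spell out the compactness/refinement step that the paper leaves as ``clear''; there is no gap.
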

\begin{proof}
If $X$ is isolated then by definition it is of finite type and there is a finite set $F\subseteq G$ so that $\NN_X^F=\{X\}$. Then the partition \[\PP:=\{C_p\subseteq X\colon p\in A^F\text{ is an allowed pattern in }X\}\] is as desired.

Conversely, suppose that $X\subseteq A^G$ is of finite type and has the property as in the statement with respect to a partition $\PP$. It is clear that then it has the same property with respect to any refinement $\PP'\preceq \PP$. Thus we can find a refinement $\PP'\preceq \PP$ which is of the form $\{C_p\colon p\in A^F\text{ is an allowed pattern in }X\}$, for some finite set $F\subseteq G$. It is clear that then $\NN_X^F=\{X\}$, so $X$ is isolated.
\end{proof}
The following corollary that being isolated is a conjugacy invariant immediately follows from Lemma~\ref{lem:isolatedcharacterization} and the fact that being of finite type is a conjugacy invariant (see e.g. \cite[Proposition 1.5]{Bar17}).
\begin{corollary}\label{cor:isolatedinvariant}
Let $A$ and $B$ be finite sets with at least two elements, and $G$ a countable group. Suppose that $X\subseteq A^G$ is isolated and $Y\subseteq B^G$ is a subshift such that there is a $G$-equivariant homeomorphism between $X$ and $Y$. Then $Y$ is isolated as  well.
\end{corollary}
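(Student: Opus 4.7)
The plan is to use the characterization from Lemma~\ref{lem:isolatedcharacterization} together with the fact (cited from \cite[Proposition 1.5]{Bar17}) that being a subshift of finite type is a conjugacy invariant. Let $\phi: X \to Y$ be the given $G$-equivariant homeomorphism.

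First, since $X$ is isolated, it is of finite type by Definition~\ref{def:isolatedsubshift}, and hence $Y$ is of finite type as well by the cited invariance. It therefore remains to verify the partition condition from Lemma~\ref{lem:isolatedcharacterization} for $Y$. By that lemma applied to $X$, we obtain a clopen partition $\PP = \{P_1, \ldots, P_n\}$ of $X$ into non-empty pieces such that no proper non-empty subshift of $X$ meets every $P_i$.

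Next, I would transport this partition to $Y$ by setting $\PP' := \{\phi(P_1), \ldots, \phi(P_n)\}$. Because $\phi$ is a homeomorphism, $\PP'$ is a clopen partition of $Y$ into non-empty pieces. Suppose for contradiction that there is a proper non-empty subshift $Z \subsetneq Y$ that meets every $\phi(P_i)$. Then $\phi^{-1}(Z)$ is a closed subset of $X$; it is $G$-invariant because $\phi$ is $G$-equivariant; it is non-empty and proper in $X$ because $\phi$ is a bijection; and it meets every $P_i$ since $Z$ meets every $\phi(P_i)$. This contradicts the defining property of $\PP$.

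Thus $Y$ satisfies the hypothesis of Lemma~\ref{lem:isolatedcharacterization} (with partition $\PP'$), and so $Y$ is isolated. The argument is essentially bookkeeping: the main point is the symmetry of the partition characterization under $G$-equivariant homeomorphisms, so no serious obstacle arises once one invokes Lemma~\ref{lem:isolatedcharacterization} and the conjugacy invariance of the SFT property.
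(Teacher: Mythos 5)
Your proposal is correct and follows exactly the route the paper intends: the paper derives this corollary "immediately" from Lemma~\ref{lem:isolatedcharacterization} together with the conjugacy invariance of the SFT property, and your argument is precisely the spelled-out version of that, transporting the distinguished clopen partition through the equivariant homeomorphism and pulling back any offending subshift.
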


\begin{lemma}\label{lem:CHL}
Let $A$ and $B$ be two finite sets with at least two elements and let $G$ be a countable group. Let $X\in\SH_G(A)$ and $Y\in\SH_G(B)$ be two shifts and $\phi:X\rightarrow Y$ a continuous $G$-equivariant map. Then there exists a finite set $F\subseteq G$ such that for every $Z\subseteq Z'\in\NN_X^F$, $\phi$ is defined on $Z$.
\end{lemma}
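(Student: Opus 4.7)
The plan is to apply the Curtis--Hedlund--Lyndon theorem (stated as the corollary after Lemma~\ref{lem:factoringonshift}) directly to $\phi$, and let the defining window of the resulting sliding block code serve as the set $F$. Concretely, the theorem gives a finite set $F \subseteq G$ and a map $f : X_F \to B$ such that $\phi(x)(g) = f(g^{-1}x \upharpoonright F)$ for every $x \in X$ and $g \in G$. I would show that this $F$ already witnesses the conclusion.

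So suppose $Z \subseteq Z'$ with $Z' \in \NN_X^F$, which by definition means $Z'_F = X_F$. Since $Z \subseteq Z'$, every $F$-pattern of $Z$ lies in $Z'_F = X_F$, i.e.\ in the domain of $f$. Hence the formula $\tilde\phi(z)(g) := f(g^{-1}z \upharpoonright F)$ makes sense for all $z \in Z$ and $g \in G$, and yields a map $\tilde\phi : Z \to B^G$ which agrees with $\phi$ wherever both are defined. Continuity and $G$-equivariance of $\tilde\phi$ are immediate from the locality of the formula, exactly as in the proof of CHL. This $\tilde\phi$ is precisely the meaning of ``$\phi$ is defined on $Z$''.

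I do not expect a serious obstacle here; the lemma is essentially an unpacking of CHL. The one point worth flagging is interpretational: the extension $\tilde\phi$ a priori takes values in $B^G$, not in $Y$, because the sliding block code $f$ may produce configurations outside $Y$ when it reads $F$-patterns appearing in $Z$ along orbits that do not correspond to any $x \in X$. Since the statement only asserts that $\phi$ extends to $Z$ as a sliding block code, this is harmless, and it is plausibly how the lemma will be used in subsequent approximation arguments where one needs to apply a fixed block code to a nearby subshift before worrying about where its image sits.
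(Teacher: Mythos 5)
Your proof is correct and is essentially identical to the paper's: both apply the Curtis--Hedlund--Lyndon theorem to obtain the window $F$ and the local rule $f:X_F\to B$, and then observe that $Z\subseteq Z'\in\NN_X^F$ gives $Z_F\subseteq Z'_F=X_F$, so the same formula defines $\phi$ on $Z$. Your closing remark that the extension takes values in $B^G$ rather than in $Y$ is a fair reading of what the lemma actually asserts and matches how it is used later.
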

\begin{proof}
Fix $X$, $Y$ and $\phi:X\rightarrow Y$ as in the statement. By the Curtis-Hedlund-Lyndon theorem, $\phi$ is induced by some map $\phi_0: X_F\rightarrow B$, where $F\subseteq G$ is a finite subset, so that for every $x\in X$ and $g\in G$ we have \[\phi(x)(g):=\phi_0\big(g^{-1}x\upharpoonright F\big).\] Now if $Z\subseteq Z'\in\NN_X^F$ then $Z_F\subseteq Z'_F=X_F$, thus $\phi$ can be defined on $Z$ using $\phi_0$ exactly in the same way.
\end{proof}
The previous lemma makes sense of the following definition.
\begin{definition}\label{def:projisolated}
Let $A$ be a finite set with at least two elements and $G$ be a countable group. A subshift $X\in\SH_G(A)$ is called \emph{projectively isolated} if there exist a subshift $Y\in\SH_G(B)$, for some finite set $B$, a finite set $F\subseteq G$, and a continuous $G$-equivariant map $\phi: Y\rightarrow X$ such that the set $\{Z\in \NN_Y^F\colon \phi[Z]=X\}$ has a non-empty interior containing $Y$.

We shall call any such map $\phi_Z$ an \emph{isolated factor map}.

If $X$ is a factor of an isolated subshift, then we call $X$ \emph{strongly projectively isolated}.
\end{definition}

We collect several basic observations and lemmas about projectively isolated subshifts.

\begin{lemma}
If $X\in\SH_G(A)$ is an isolated subshift, then it is also strongly projectively isolated, and a strongly projectively isolated subshift is projectively isolated.
\end{lemma}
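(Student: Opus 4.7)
The plan is to prove both implications directly by unwinding the relevant definitions. The key fact in both cases is that, by Definition~\ref{def:isolatedsubshift}, an isolated subshift $Y$ admits a finite set $F \subseteq G$ satisfying $\NN_Y^F = \{Y\}$.

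For the first implication, I would observe that every subshift is trivially a factor of itself via the identity map. Hence if $X \in \SH_G(A)$ is isolated, then $X$ is a factor of the isolated subshift $X$ itself, and therefore $X$ is strongly projectively isolated.

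For the second implication, suppose $X \in \SH_G(A)$ is strongly projectively isolated, witnessed by an isolated $Y \in \SH_G(B)$ together with a continuous $G$-equivariant factor map $\phi \colon Y \to X$. I would choose a finite set $F \subseteq G$ (given by isolation of $Y$) so that $\NN_Y^F = \{Y\}$. With this choice, the set $\{Z \in \NN_Y^F : \phi[Z] = X\}$ equals $\{Y\}$, since $\NN_Y^F = \{Y\}$ and $\phi[Y] = X$ by surjectivity of $\phi$. But $\{Y\}$ coincides with the basic open neighborhood $\NN_Y^F$, so this set is already open; in particular it has non-empty interior and contains $Y$. This verifies the requirement of Definition~\ref{def:projisolated}, so $X$ is projectively isolated. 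Notice that no appeal to Lemma~\ref{lem:CHL} is needed here for extending $\phi$, since $Y$ is the only element of $\NN_Y^F$ on which $\phi$ must be considered.

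I do not anticipate any substantive obstacle in this argument, as both implications reduce immediately to the equivalence between topological isolation in $\SH_G(B)$ and the existence of a stabilizing finite window $F$ built into Definition~\ref{def:isolatedsubshift}.
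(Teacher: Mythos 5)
Your proof is correct and follows essentially the same route as the paper: for the first implication, $X$ is a factor of itself, and for the second, isolation of $Y$ gives $\NN_Y^F=\{Y\}$, so the witnessing set in Definition~\ref{def:projisolated} is the open set $\{Y\}$. (If anything, your write-up is slightly more careful than the paper's, which loosely says ``take the identity as $\phi$'' where the factor map $Y\to X$ is meant.)
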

\begin{proof}
Let $X$ be strongly projectively isolated, i.e. it is a factor of some isolated subshift $Y$. Using the notation of Definition~\ref{def:projisolated}, we take $\NN_Y^F$ to be the neighborhood $\{Y\}$ and we take the identity as $\phi$.

If $X$ is isolated, then it is obviously a factor of an isolated subshift, thus strongly projectively isolated.
\end{proof}

\begin{lemma}
Every projectively isolated subshift is sofic. In particular, there are at most countably many projectively isolated subshifts - for each alphabet.
\end{lemma}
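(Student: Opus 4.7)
The plan is to deduce both claims quickly from the definition of projective isolation together with the density of SFTs provided by Lemma~\ref{lem:SFTnbhrds}.

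For the soficity claim, suppose $X\in\SH_G(A)$ is projectively isolated, witnessed by a subshift $Y\in\SH_G(B)$, a finite set $F\subseteq G$, and a continuous $G$-equivariant map $\phi:Y\to X$ such that $U:=\{Z\in\NN_Y^F:\phi[Z]=X\}$ has non-empty interior. Since $\SH_G(B)$ is compact metrizable and the interior of $U$ is a non-empty open subset, I invoke Lemma~\ref{lem:SFTnbhrds} to locate a subshift of finite type $Z\subseteq B^G$ inside this interior. By the defining property of $U$ we have $\phi[Z]=X$, and thus $X$ is a factor of the SFT $Z$. By definition this makes $X$ sofic.

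For the countability statement, since every projectively isolated subshift is sofic, it suffices to show there are at most countably many sofic subshifts over the fixed alphabet $A$. I would parametrize such a subshift by the following finite data: an integer $n\geq 2$, a finite defining window $F_0\subseteq G$, a collection of allowed patterns $\FF_0\subseteq n^{F_0}$ determining an SFT $Z\subseteq n^G$, a finite window $F_1\subseteq G$, and a function $f:Z_{F_1}\to A$ which, by the Curtis-Hedlund-Lyndon theorem, induces a factor map $\phi:Z\to X$. Each of these pieces of data ranges over a countable set, hence there are at most countably many sofic subshifts over $A^G$, and in particular at most countably many projectively isolated ones.

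I do not anticipate any real obstacle here; the only mildly delicate point is making sense of $\phi[Z]$ when $Z$ is not a subset of $Y$, but this is handled by the block-code formulation of Lemma~\ref{lem:CHL} that is already baked into the definition of projective isolation: the map $\phi$ is represented by a local rule $\phi_0:Y_F\to B$, and every $Z\in\NN_Y^F$ satisfies $Z_F=Y_F$, so $\phi_0$ applies verbatim to $Z$.
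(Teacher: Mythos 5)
Your proposal is correct and follows essentially the same route as the paper: use the density of subshifts of finite type (Lemma~\ref{lem:SFTnbhrds}) to find an SFT inside the open set of subshifts projecting onto $X$, conclude $X$ is sofic, and then note that sofic subshifts are determined by countable finite data. The paper leaves the countability step implicit, whereas you spell out the parametrization, but the underlying argument is the same.
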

\begin{proof}
Suppose that $X$ is projectively isolated. Then by definition there is an open set of subshifts all of them projecting onto $X$. Since by Lemma~\ref{lem:SFTnbhrds} subshifts of finite type are dense, we get that $X$ is a factor of a subshift of finite type, thus it is sofic.
\end{proof}

The following lemma is clear.
\begin{lemma}\label{lem:projshiftproperties}
Let $X$ be a projectively isolated subshift and let $Y$ be a subshift witnessing that $X$ is projectively isolated - as in Definition~\ref{def:projisolated}. Then every subshift that is a factor of $X$ is also projectively isolated and every subshift that factors onto $Y$ also witnesses that $X$ is projectively isolated.
\end{lemma}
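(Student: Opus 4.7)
For the first claim I would take any factor map $\psi\colon X\to X'$ and propose the composition $\tau := \psi\circ\phi\colon Y\to X'$ as the witness for projective isolation of $X'$. Since $\{Z\in\NN_Y^F\colon \phi[Z]=X\}$ has nonempty interior containing $Y$, I can find a finite $F^*\supseteq F$ with $\NN_Y^{F^*}$ contained in that interior (basic open sets of the form $\NN_Y^{F^*}$ form a local base at $Y$). Applying Lemma~\ref{lem:CHL} to $\tau$ yields a finite $F_\tau$ such that $\tau$ is well-defined on every subshift in $\NN_Y^{F_\tau}$. With $F'' := F^*\cup F_\tau$, any $Z\in\NN_Y^{F''}$ satisfies $\tau[Z] = \psi[\phi[Z]] = \psi[X] = X'$, so $\NN_Y^{F''}$ is an open neighborhood of $Y$ sitting inside $\{Z\in\NN_Y^{F''}\colon \tau[Z]=X'\}$, which is exactly the projective isolation requirement for $X'$.

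For the second claim, given a factor map $\chi\colon Y'\to Y$, I plan to show that the triple $(Y', F''', \phi\circ\chi)$ witnesses projective isolation of $X$. The substantive step is to choose $F'''$ so that $Z'\in\NN_{Y'}^{F'''}$ forces $\chi[Z']\in\NN_Y^{F^*}$, where $F^*$ is the set from the first paragraph. By the Curtis-Hedlund-Lyndon theorem, $\chi$ is induced by a local map with some finite window $F_\chi$, so whenever $F'''\supseteq F^*\cdot F_\chi$, comparing $\chi(z')$ and $\chi(y')$ patternwise on $F^*$ for pairs $z'\in Z',\, y'\in Y'$ with $z'\upharpoonright F''' = y'\upharpoonright F'''$ shows that $(\chi[Z'])_{F^*} = (\chi[Y'])_{F^*} = Y_{F^*}$. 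Thus $\phi\circ\chi$ sends every such $Z'$ onto $X$. Enlarging $F'''$ once more to accommodate the defining window of $\phi\circ\chi$ (again via Lemma~\ref{lem:CHL}) finishes the construction.

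The main obstacle, mild as it is, lies in the second assertion: one must verify that the cellular automaton inducing $\chi$ behaves continuously with respect to the Vietoris topology on $\SH_G(\cdot)$, so that a small Hausdorff perturbation of $Y'$ produces a correspondingly small perturbation of $\chi[Y']=Y$. Once this patternwise argument is carried out, the remainder is a formal composition of factor maps, and the first assertion follows immediately.
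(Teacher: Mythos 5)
Your argument is correct; the paper offers no proof of this lemma (it simply declares it clear), and your composition-of-local-rules argument --- with the window enlargements $F^*\cup F_\tau$ and $F^*\cdot F_\chi$ handled via Lemma~\ref{lem:CHL} and the Curtis--Hedlund--Lyndon theorem --- is precisely the routine verification the author is implicitly relying on. Nothing is missing.
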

Since an isomorphism is a special case of a factor map, Lemma~\ref{lem:projshiftproperties} immediately gives.
\begin{corollary}\label{cor:projisolatedconjugacy}
Being projectively isolated is a conjugacy invariant.
\end{corollary}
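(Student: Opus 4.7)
The plan is to deduce this corollary directly from the first assertion of Lemma~\ref{lem:projshiftproperties}, essentially as the text's hint suggests. Suppose $X\in\SH_G(A)$ is projectively isolated and $X'\in\SH_G(B)$ is a subshift conjugate to $X$ via a $G$-equivariant homeomorphism $\psi: X\to X'$. Since $\psi$ is in particular a continuous, $G$-equivariant, surjective map, $X'$ is a factor of $X$. Applying Lemma~\ref{lem:projshiftproperties} to $X$ and to the factor map $\psi$ then immediately yields that $X'$ is projectively isolated.

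Concretely, if $Y\in\SH_G(C)$ together with a finite $F\subseteq G$ and a factor map $\phi:Y\to X$ witness that $X$ is projectively isolated (so that $\{Z\in\NN_Y^F : \phi[Z]=X\}$ has nonempty interior containing $Y$), then composing with the conjugacy $\psi$ gives a factor map $\psi\circ\phi: Y\to X'$, and for every $Z$ in that same open neighborhood of $Y$ one has $(\psi\circ\phi)[Z]=\psi[\phi[Z]]=\psi[X]=X'$. Hence the same $Y$, $F$, and window witness that $X'$ is projectively isolated. This is the content of the first half of Lemma~\ref{lem:projshiftproperties} applied to the special case when the factor is a conjugacy.

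There is no genuine obstacle here: the notion of being projectively isolated is phrased purely in terms of the existence of a factor map with an open-neighborhood property, and both ``factor map'' and ``having nonempty interior'' are preserved under post-composition with a $G$-equivariant homeomorphism. The only thing to be slightly careful about is that a conjugacy between subshifts over possibly different alphabets still qualifies as a factor map in the sense of Definition~\ref{def:projisolated}, which is immediate from the definitions.
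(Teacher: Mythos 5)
Your proof is correct and follows exactly the paper's route: the paper derives the corollary from Lemma~\ref{lem:projshiftproperties} by observing that an isomorphism is a special case of a factor map, which is precisely your argument (you simply unwind the lemma's proof by composing the witnessing factor map with the conjugacy). Nothing is missing.
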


The content of the following lemma is that the map projecting onto a projectively isolated subshift is without loss of generality induced by a map between their respective alphabets.
\begin{lemma}\label{lem:standardformoffactormaps}
Let $X\subseteq A^G$, for some $A$ and $G$, be a projectively isolated subshift and let $Y$ be a subshift factoring onto $X$ witnessing that. Then there exists a subshift $Y'\subseteq B^G$ isomorphic to $Y$ which witnesses that $X$ is projectively isolated via a map $P:Y'\rightarrow X$ which is defined by a map $P_0:B\to A$.
\end{lemma}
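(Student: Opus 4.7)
The plan is to apply the Curtis--Hedlund--Lyndon theorem to recode $Y$ over the alphabet of its $F$-patterns, so that the factor map $\phi$ becomes a one-block code, and then verify that the projective-isolation neighborhood transfers under this recoding.

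By CHL, there exist a finite symmetric $F\subseteq G$ containing $1_G$ and a map $\phi_0\colon Y_F\to A$ with $\phi(y)(g)=\phi_0(g^{-1}y\upharpoonright F)$ for all $y\in Y$ and $g\in G$. Set $B:=Y_F$, adjoining a fresh symbol if necessary to ensure $|B|\geq 2$, and define $e\colon Y\to B^G$ by $e(y)(g):=g^{-1}y\upharpoonright F$. The map $e$ coincides with $Q^\sigma_\PP$ for the partition $\PP:=\{C_p(Y)\colon p\in Y_F\}$, so by Lemma~\ref{lem:SFTfactorofSFT} it is an injective continuous $G$-equivariant map; hence $Y':=e[Y]\subseteq B^G$ is a subshift conjugate to $Y$. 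Define $P_0\colon B\to A$ by $P_0(p):=\phi_0(p)$ on $Y_F$ (arbitrarily elsewhere) and let $P\colon B^G\to A^G$ be the induced one-block code. A direct computation gives $P\circ e=\phi$, so $P[Y']=X$.

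To transfer projective isolation, let $F_1\subseteq G$ be a finite symmetric set with $1_G\in F_1$ such that $\phi[Z]=X$ for every $Z\in\NN_Y^{F_1}$, and set $F':=F_1\cup F$. For any $Z'\in\NN_{Y'}^{F'}$, the inclusion $F\subseteq F'$ forces the two-block patterns of $Z'$ on $\{1_G,f\}$ (for $f\in F$) to agree with those of $Y'$, which by the construction of $e$ satisfy the compatibility identities $y'(g)(f)=y'(gf)(1_G)$. Hence these identities hold throughout $Z'$, and the one-block decoding $d\colon Z'\to C^G$ defined by $d(z')(g):=z'(g)(1_G)$ (where $C$ is the ambient alphabet of $Y$) satisfies $P(z')=\phi(d(z'))$ for every $z'\in Z'$. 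A short two-sided check using $Z'_{F_1}=Y'_{F_1}$ yields $d[Z']_{F_1}=Y_{F_1}$, so $d[Z']\in\NN_Y^{F_1}$, whence $P[Z']=\phi[d[Z']]=X$.

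The main technical point is the choice of $F'$: because $Y$ and $Y'$ inhabit different symbolic spaces, $F'$ must be taken large enough that subshifts in $\NN_{Y'}^{F'}$ both inherit the internal compatibility identities defining $Y'\subseteq B^G$ and decode, via $d$, into the original witness neighborhood $\NN_Y^{F_1}$; once both aspects are arranged by setting $F'=F_1\cup F$, the rest is a straightforward bookkeeping exercise.
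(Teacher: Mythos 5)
Your proposal is correct and follows essentially the same route as the paper: recode $Y$ over the alphabet $B=Y_F$ via the map $Q^\sigma_\PP$ of Lemma~\ref{lem:SFTfactorofSFT} so that $\phi$ becomes a one-block code $P$. The paper dismisses the transfer of the witnessing neighborhood as ``clear,'' whereas you verify it explicitly via the compatibility identities and the decoding map $d$; that verification is sound (note it does require $1_G\in F$, which you correctly arranged).
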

\begin{proof}
Let $X$ and $Y$ be as in the statement and suppose that the projective isolatedness is witnessed by a map $\phi:Y\rightarrow X$ induced by a map $\phi_0:Y_F\rightarrow A$ between the alphabets, where $F\subseteq G$ is finite. Consider the partition $\PP:=\{C_p\subseteq Y\colon p\in Y_F\}$ of $Y$ as in Lemma~\ref{lem:SFTfactorofSFT}. Using the notation of Lemma~\ref{lem:SFTfactorofSFT}, $\QQ_\PP^\sigma: Y\rightarrow \PP^G$ induces an isomorphism between $Y$ and its image denote $Y'\subseteq B^G$, where $B$ is identified with $\PP$. It is then clear that the map $P:=\phi\circ (\QQ_\PP^\sigma)^{-1}: Y'\rightarrow X$ is induced by a map $P_0:B\to A$, finishing the proof.
\end{proof}
\begin{example}
Let us provide an example of a subshift that is strongly projectively isolated, however it is not isolated. Let $X\subseteq \{0,1\}^\Int$ be the shift \[\{x\in \{0,1\}^\Int\colon |x^{-1}(\{1\})|\leq 1\}.\] $X$ cannot be isolated since it is not even a subshift of finite type. Let $Y\subseteq \{-1,0,1\}^\Int$ be a subshift of finite type whose defining window is an interval of length $2$ and the allowed patterns are $\{-1-1,-11,10,00\}$. Let $P:Y\rightarrow X$ be a factor map induced by the map $P_0:\{-1,0,1\}\to\{0,1\}$ defined by $P_0(-1)=P_0(0)=0$ and $P_0(1)=1$. One easily checks that $\NN_Y^{\{0,1\}}=\{Y\}$, i.e. $Y$ is isolated, thus $Y$ and $P$ witness that $X$ is projectively isolated.
\end{example}

We conclude this section with one more notion of a subshift, weaker than being sofic, yet still obtained given finite data.
\begin{definition}\label{def:effectiveshubshift}
Let $G$ be a finitely generated and recursively presented group and let $A$ be a non-trivial finite set. Fix a finite symmetric generating set $S\subseteq G$ for $G$. We say that a subshift $X\subseteq A^G$ is \emph{effective} if there exists an algorithm (formally, a Turing machine) that given a pattern $p\in A^F$, which is presented to the algorithm in the form $\{(w_i,a_i)\colon i\leq n\}$, where each $a_i\in A$ and $w_i$ is a word in letters from $S$, decides whether $p$ is allowed in $X$, or not.
\end{definition}
\begin{remark}
The notion of an effective subshift was introduced by Hochman for $\Int^d$-subshifts in \cite{Hoch09}. A general notion for general finitely generated groups was given in \cite{AuBarSab}. The definition above corresponds to their definition jointly with \cite[Proposition 2.1 and Lemma 2.3]{AuBarSab} for recursively presented groups. In particular, notice that, without loss of generality, the set of forbidden patterns can be taken recursive instead of only recursively enumerable. We will restrict to this case in this paper.
\end{remark}
The following follows from the definition.
\begin{lemma}\label{lem:effectivesubshiifts}
Let $1\to N\to G\to H\to 1$ be a short exact sequence of groups where $G$ and $H$ are finitely generated and recursively presented. We assume that $N\subseteq G$. Let $A$ be a non-trivial finite set, let $X\subseteq A^G$ be a subshift on which $N$ acts trivially, and let $X'\subseteq A^H$ be the corresponding subshift over $H$. Then $X$ is effective if and only if $X'$ is.
\end{lemma}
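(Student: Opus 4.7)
The plan is to exhibit purely syntactic, computable word-level translations between pattern presentations over $G$ and over $H$, and to observe that the triviality of the $N$-action makes these translations preserve realizability, so that the decision problem for allowedness transfers across $\pi$. Fix finite symmetric generating sets $S_G$ for $G$ and $S_H$ for $H$; effectiveness is independent of the chosen generating set by a translation internal to each group. Since the short exact sequence is given, for each $s\in S_G$ we may once and for all write down an $S_H$-word representing $\pi(s)\in H$, and by concatenation we obtain a computable monoid map $\Phi:S_G^*\to S_H^*$ such that $\pi(\overline{w})=\overline{\Phi(w)}$ in $H$ for every $w\in S_G^*$. Dually, since $\pi$ is surjective, for each $s'\in S_H$ pick an $S_G$-word $\Psi(s')$ with $\pi(\overline{\Psi(s')})=s'$ and extend to a computable $\Psi:S_H^*\to S_G^*$.

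The key observation is that because $N$ acts trivially on $X$, every $x\in X$ factors uniquely as $x=x'\circ\pi$ for some $x'\in X'$, and conversely every $x'\in X'$ lifts to $x'\circ\pi\in X$ (this is how $X'$ is defined). Consequently, a presentation $\{(w_i,a_i)\colon i\le n\}$ with $w_i\in S_G^*$ is realized by some $x\in X$ if and only if the translated presentation $\{(\Phi(w_i),a_i)\colon i\le n\}$ is realized by some $x'\in X'$, since $x'(\overline{\Phi(w_i)})=x'(\pi(\overline{w_i}))=x(\overline{w_i})$ for any such factorization. The symmetric equivalence holds with $\Psi$ in place of $\Phi$ for presentations originating over $H$. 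Both directions of the lemma now follow at once: if $X'$ is effective, the algorithm deciding allowedness in $X$ on input $\{(w_i,a_i)\}$ computes $\{(\Phi(w_i),a_i)\}$ and invokes the $X'$-algorithm; if $X$ is effective, one uses $\Psi$ in the opposite direction.

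The step I expect to be most delicate is not algorithmic but conceptual: one might worry that checking well-definedness of an input (i.e.\ ruling out $w_i\equiv_G w_j$ with $a_i\ne a_j$) would require the word problem of $G$, which is only semi-decidable under our hypotheses. This worry is spurious, because any algorithm deciding allowedness must output ``forbidden'' on any presentation not realizable in the respective subshift, and an inconsistent presentation is automatically unrealizable. Thus the translations above remain purely syntactic on words, while all semantic identifications of words are absorbed into the black-box decision algorithm on the other side of the reduction, and no assumption beyond recursive presentability is needed to make the reduction effective.
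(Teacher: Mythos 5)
Your proof is correct; the paper offers no written proof here (it states only that the lemma ``follows from the definition''), and your argument is exactly the intended one: the triviality of the $N$-action gives the bijection $x\leftrightarrow x'$ with $x=x'\circ\pi$, and the computable word translations $\Phi$ (along $\pi$) and $\Psi$ (along a set-theoretic section of $\pi$ on generators) turn each allowedness problem into the other. Your closing remark about inconsistent pattern codings being absorbed into the decision algorithm is also the right way to sidestep the word problem, consistent with the Aubrun--Barbieri--Sablik formalism the paper's Definition~\ref{def:effectiveshubshift} refers to.
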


\section{Strong topological Rokhlin property}\label{sect:mainproofs}
This section is devoted to proving the equivalence between \eqref{it:intro1-1} and \eqref{it:intro1-2} of Theorem~\ref{thm:intro1} from Introduction. We restate it more precisely below.
\begin{theorem}\label{thm:mainRokhlin}
Let $G$ be a countable group. Then $G$ has the strong topological Rokhlin property if and only if for every $n\geq 2$, the set of projectively isolated subshifts in $\SH_G(n)$ is dense.
\end{theorem}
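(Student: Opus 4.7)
My plan hinges on one key equivalence that translates between $\Act_G(\CC)$ and $\SH_G(\PP)$: for a clopen partition $\PP$ of $\CC$ and a subshift $X\subseteq\PP^G$ in which every letter appears, $X$ is projectively isolated if and only if $\QQ(\cdot,\PP)^{-1}[\{X\}]$ has non-empty interior in $\Act_G(\CC)$. I would establish this lemma first. The $\Leftarrow$ direction sets $Y:=\QQ(\beta,\PP')$ for $\beta$ in the interior with $\PP'\preceq\PP$, then uses Proposition~\ref{prop:Qmap-nbhds}(\ref{it3-Qmap-nbhds}) to lift every $Z\in\NN_Y^F$ back to some $\gamma\in\NN_\beta^{F,\PP'}$ and Proposition~\ref{prop:Qmap-nbhds}(\ref{it2-Qmap-nbhds}) to force the alphabet-inclusion map to send $Z$ onto $X$. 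The $\Rightarrow$ direction starts from a witness $(Y,F,\phi)$, applies Lemma~\ref{lem:standardformoffactormaps} to put $\phi$ in alphabetic form so that the alphabet of $Y$ can be realized as a clopen refinement of $\PP$, and lifts $Y$ via Proposition~\ref{prop:Qcontinuity} to an action whose basic neighborhood sits in the preimage.

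For the forward direction of the theorem I would fix a comeager conjugacy class $[\alpha]$ and exploit a counting observation: since $\mathrm{Clopen}(\CC)$ is countable, so is the family of $n$-part clopen partitions of $\CC$; combined with the identity $\QQ(\phi\alpha\phi^{-1},\PP)=\QQ(\alpha,\phi^{-1}\PP)$ (modulo the natural alphabet bijection), this shows that $\QQ([\alpha],\PP)$ is a countable set $\{X_i\}_{i\in\Nat}$. Given any $X\in\SH_G(n)$ and basic neighborhood $\NN_X^F$, the set $W:=\QQ(\cdot,\PP)^{-1}[\NN_X^F]$ is Polish open (Propositions~\ref{prop:Qcontinuity} and~\ref{prop:Qmap-nbhds}(\ref{it1-Qmap-nbhds})), $[\alpha]\cap W$ is comeager in $W$, and it is covered by the closed fibers $\QQ(\cdot,\PP)^{-1}[\{X_i\}]$. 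Baire category forces some fiber $\QQ(\cdot,\PP)^{-1}[\{X_{i_0}\}]\cap W$ to be non-meager and hence to have non-empty interior; the key equivalence certifies the corresponding $X_{i_0}\in\NN_X^F$ as projectively isolated.

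For the converse I would introduce
\[S:=\{\alpha\in\Act_G(\CC):\text{for every clopen partition }\PP,\ \alpha\in\mathrm{int}\,\QQ(\cdot,\PP)^{-1}[\{\QQ(\alpha,\PP)\}]\},\]
which is conjugation-invariant (by the same alphabet-bijection identity) and $G_\delta$ (countable intersection over $\PP$ of open sets, each open as a union of basic neighborhoods on which $\QQ(\cdot,\PP)$ is constant). Density of $S$ is where the hypothesis enters: inside any basic open $\NN_{\alpha_0}^{F,\PP}$, density of projectively isolated subshifts in $\NN_{\QQ(\alpha_0,\PP)}^F$ supplies some projectively isolated $X'$ there, and the key equivalence produces $\tilde\alpha\in\NN_{\alpha_0}^{F,\PP}$ at which $\QQ(\cdot,\PP)$ is locally constant. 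So $S$ is a dense $G_\delta$. To obtain STRP it suffices to show any two elements of $S$ lie in a common $\Homeo(\CC)$-orbit. I would achieve this by a back-and-forth over an enumeration of $\mathrm{Clopen}(\CC)$ and an exhaustion of $G$: at stage $n$ I maintain refining clopen partitions $\PP_n,\PP'_n$ of $\CC$ and a bijection $\sigma_n\colon\PP_n\to\PP'_n$ realizing $\sigma_n[\QQ(\beta,\PP'_n)]=\QQ(\alpha,\PP_n)$, and at stage $n+1$ I use Proposition~\ref{prop:Qmap-nbhds}(\ref{it3-Qmap-nbhds}) to produce an auxiliary action realizing the refined $\beta$-side shift determined by a chosen refinement of $\PP_n$, together with $\beta\in S$ to ensure the refinement of $\PP'_n$ can be chosen so that this refined shift is actually $\QQ(\beta,\PP'_{n+1})$. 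The limit bijection induces the conjugating homeomorphism.

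I expect the main obstacle to be the back-and-forth step. The structural pieces (key equivalence, countability of partitions, invariance and $G_\delta$-ness of $S$) follow from the propositions of Section~\ref{sect:Cantor} with routine bookkeeping, and the forward direction is essentially a Baire count. The delicate work lies in simultaneously preserving the invariant $\sigma_n[\QQ(\beta,\PP'_n)]=\QQ(\alpha,\PP_n)$, making both partition meshes shrink to zero (so that the limit yields a genuine homeomorphism of $\CC$), and using the $S$-property at each step to convert the auxiliary $\gamma$ given by Proposition~\ref{prop:Qmap-nbhds}(\ref{it3-Qmap-nbhds}) into a refinement internal to $\beta$ itself.
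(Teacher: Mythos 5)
Your key equivalence (a subshift $X$ containing every letter of $\PP$ is projectively isolated iff the fiber $\QQ(\cdot,\PP)^{-1}[\{X\}]$ has non-empty interior) is correct and follows from Proposition~\ref{prop:Qmap-nbhds} exactly as you indicate, and your forward direction is sound: covering the comeager class intersected with the open set $\QQ(\cdot,\PP)^{-1}[\NN_X^F]$ by the countably many closed fibers over $\QQ([\alpha],\PP)$ and applying Baire is a clean, direct version of what the paper does contrapositively (the paper instead shows that for each non-projectively-isolated $X$ the set $A(X)$ of actions avoiding $X$ under every partition is a dense $G_\delta$, and derives a contradiction). Likewise, your set $S$ is indeed a conjugation-invariant dense $G_\delta$; this matches the paper's Proposition~\ref{prop:inverselimitofprojiso}, where essentially the same set $\GG$ is introduced (though only later, to analyze shadowing).

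The converse direction, however, has a genuine gap: it is false that any two elements of $S$ lie in a common $\Homeo(\CC)$-orbit, so the back-and-forth you propose cannot close. Take $G=\Int$ and let $\beta$ be the product of a two-point periodic orbit with the trivial action on $\CC$. For every clopen partition $\QQ$, the factor $\QQ(\beta,\QQ)$ is a finite (hence isolated) subshift, and $\QQ(\cdot,\QQ)$ is locally constant at $\beta$ by Proposition~\ref{prop:Qmap-nbhds}\eqref{it1-Qmap-nbhds}; thus $\beta\in S$. The generic homeomorphism $\alpha$ also lies in $S$ (its class is comeager and $S$ is a conjugation-invariant comeager set), yet $\alpha$ and $\beta$ are certainly not conjugate. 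The failure shows up precisely at the step you flag as delicate: given $\sigma_n[\QQ(\beta,\PP'_n)]=\QQ(\alpha,\PP_n)$ and a refinement $\PP_{n+1}\preceq\PP_n$ on the $\alpha$-side, there is in general no refinement $\PP'_{n+1}\preceq\PP'_n$ with $\QQ(\beta,\PP'_{n+1})$ isomorphic over $\QQ(\beta,\PP'_n)$ to $\QQ(\alpha,\PP_{n+1})$ --- membership in $S$ only says that each symbolic factor of $\beta$ is locally constant near $\beta$, not that every admissible extension of a factor is realized by some finer partition (in the example, every symbolic factor of $\beta$ is finite, while $\alpha$ has infinite ones). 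What is actually needed is a weak-amalgamation statement rather than pairwise conjugacy on a dense $G_\delta$: this is why the paper invokes Proposition~\ref{prop:BYMeTs} and verifies its hypothesis by the fiber-product construction $W=\{(x,y)\colon Q_\PP^{\gamma_1}(x)=Q_\PP^{\gamma_2}(y)\}$, which conjugates a single joint action into both $W_1$ and $W_2$ by homeomorphisms differing by an element of $V$, without ever matching $\gamma_1$ and $\gamma_2$ exactly. To repair your argument you would have to replace the invariant ``$\sigma_n$ identifies the current factors'' by such an amalgamation step, which is essentially the content of the paper's proof.
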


We prove the two implications separately. 
\subsection{Failure of the strong topological Rokhlin property}
\begin{proposition}\label{prop:noRokhlin}
Let $G$ be a countable group. If there exists $n\geq 2$ such that the projectively isolated subshifts are not dense in $\SH_G(n)$, then $G$ does not have the strong topological Rokhlin property. In fact, every conjugacy class in $\Act_G(\CC)$ is meager.
\end{proposition}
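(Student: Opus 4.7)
The plan is to argue by contradiction via the $0$-$1$ law. By Fact~\ref{fact:0-1-law}, each conjugacy class $[\alpha]$ in $\Act_G(\CC)$ (which is analytic, hence has the Baire property) is either meager or comeager. It therefore suffices to derive a contradiction from the existence of some $\alpha$ with comeager $[\alpha]$, under the hypothesis that some non-empty open $U \subseteq \SH_G(n)$ contains no projectively isolated subshift; the ``in fact'' strengthening then follows by applying the dichotomy to an arbitrary $[\alpha]$.

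The first step is a \emph{fiber criterion} relating projective isolation to the topology of $\Act_G(\CC)$: for a partition $\PP$ of $\CC$ into $n$ non-empty clopen sets, a subshift $X \in \NN_{\PP^G}^{\{1_G\}}$ is projectively isolated if and only if the closed fiber $F_X := \QQ(\cdot,\PP)^{-1}(X) \subseteq \Act_G(\CC)$ has non-empty interior. For $(\Leftarrow)$, given $\NN_\beta^{F,\PP'} \subseteq F_X$ with $\PP' \preceq \PP$, set $Y := \QQ(\beta,\PP')$; Proposition~\ref{prop:Qmap-nbhds}(1) identifies $\QQ(\cdot,\PP')[\NN_\beta^{F,\PP'}] = \NN_Y^F$, and~(2) then gives $\phi[\QQ(\beta',\PP')] = \QQ(\beta',\PP) = X$ for every $\beta' \in \NN_\beta^{F,\PP'}$, so $\phi[Z] = X$ for every $Z \in \NN_Y^F$ (where $\phi$ is the inclusion-induced factor map), witnessing projective isolation. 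For $(\Rightarrow)$, put the witness in alphabet-map form via Lemma~\ref{lem:standardformoffactormaps} and realize it as $\QQ(\beta,\PP'')$ by surjectivity from Proposition~\ref{prop:Qcontinuity}, where $\PP''$ is a partition of $\CC$ whose alphabet-collapse is $\PP$; the previous argument then runs in reverse.

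With the criterion in hand, I would reduce to the case $X := \QQ(\alpha,\PP) \in U$ as follows. Shrink $U$ so that $U \subseteq \NN_{\PP^G}^{\{1_G\}}$ for a fixed size-$n$ partition $\PP$ (by induction on $n$: subshifts missing a letter embed in $\SH_G(n-1)$); then $\QQ(\cdot,\PP)^{-1}(U)$ is open and non-empty, so by density of $[\alpha]$ there is $\alpha' \in [\alpha] \cap \QQ^{-1}(U)$, and after replacing $\alpha$ by $\alpha'$ the subshift $X \in U$ is not projectively isolated, whence $F_X$ is nowhere dense by the fiber criterion. The $\Homeo(\CC)$-saturation $\tilde F_X := \bigcup_{\phi \in \Homeo(\CC)} \phi F_X \phi^{-1} = \{\beta : X \text{ is a factor of the action } \beta\}$ is $\Homeo(\CC)$-invariant and analytic, hence has the Baire property; by Fact~\ref{fact:0-1-law} it is meager or comeager. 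Since $[\alpha] \subseteq \tilde F_X$ is comeager, $\tilde F_X$ is comeager as well.

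The hardest step, and the core of the argument, is to contradict ``$F_X$ nowhere dense'' by ``$\tilde F_X$ comeager''. My plan is to pass to the Polish space of all partitions of $\CC$ into $n$ non-empty clopen sets and apply a Kuratowski--Ulam argument on its product with $\Act_G(\CC)$: the set of pairs $(\PP',\beta)$ with $\QQ(\beta,\PP') = X$ is closed, with each vertical fiber a $\Homeo(\CC)$-translate of $F_X$, hence nowhere dense. Kuratowski--Ulam renders this product set meager, and a careful analysis of its projection onto $\Act_G(\CC)$ --- exploiting transitivity of $\Homeo(\CC)$ on the partition space and the open-mapping property of $\QQ(\cdot,\PP)$ derived from Proposition~\ref{prop:Qmap-nbhds}(1) --- should force $\tilde F_X$ itself to be meager, the desired contradiction. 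The delicate point is handling the projection: a priori, meager subsets of a product need not project to meager sets, but here the continuous open-mapping structure coming from the partition-parameterization should allow one to control the projection via a Fubini-type argument on the Polish quotient realizing the partition space.
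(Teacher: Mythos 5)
Your first two steps are sound and coincide with the paper's argument: your ``fiber criterion'' is exactly the content of the paper's Lemma~\ref{lem:claimlemma} (the contrapositive of your $(\Leftarrow)$ direction, proved via Proposition~\ref{prop:Qmap-nbhds}(1)--(2), shows that if $X$ is not projectively isolated then the closed fiber $\QQ(\cdot,\PP')^{-1}(X)$ is nowhere dense for \emph{every} clopen partition $\PP'$ into $n$ pieces), and the reduction to $X=\QQ(\alpha,\PP)\in U$ via density of the conjugacy class is the same as in the paper. The problem is the final step, which you yourself identify as the core and the hardest part, and which you do not actually prove: you need the saturation $\tilde F_X$ to be meager, and your proposed route --- Kuratowski--Ulam on the product of a ``Polish space of partitions'' with $\Act_G(\CC)$, followed by an analysis of the projection --- founders on exactly the obstruction you name, namely that projections of meager subsets of a product need not be meager. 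Saying that the open-mapping structure ``should force'' meagerness of the projection is not an argument, and no mechanism is given that would convert fiberwise nowhere-density into meagerness of the image.

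The gap closes with an observation that makes the entire Kuratowski--Ulam apparatus unnecessary: $\mathrm{Clopen}(\CC)$ is countable, so there are only \emph{countably} many partitions of $\CC$ into $n$ disjoint non-empty clopen sets. Since $\QQ(\varphi\beta\varphi^{-1},\varphi\PP)=\QQ(\beta,\PP)$, the saturation is precisely $\tilde F_X=\bigcup_{\PP'}\QQ(\cdot,\PP')^{-1}(X)$, a countable union over all such partitions $\PP'$; each term is closed (by continuity of $\QQ(\cdot,\PP')$, Proposition~\ref{prop:Qcontinuity}) and nowhere dense (by your fiber criterion applied to $\PP'$ in place of $\PP$ --- note you must run that criterion for every partition, not just the fixed $\PP$), hence $\tilde F_X$ is meager and its complement is a dense $G_\delta$ meeting the comeager class $[\alpha]$, the desired contradiction. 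This is exactly how the paper concludes: it defines $A(X)$ as the complement of your $\tilde F_X$ and writes it as the countable intersection $\bigcap_{\PP'\in\mathbb{P}_n}\AAA^{\PP'}_X$ of dense open sets. So your outline is recoverable, but as written the decisive step is missing and the proposed substitute would not go through.
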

\begin{proof}
We fix a countable group $G$ without the STRP.

Let $n\geq 2$ be such that the set of projectively isolated subshifts is not dense in $\SH_G(n)$. That is, there is a non-empty open set $U\subseteq \SH_G(n)$ that contains no projectively isolated subshifts. By passing to an open subset of $U$ if necessary, we may without loss of generality assume that there is a subset $I\subseteq \{1,\ldots,n\}$ such that for every $X\in U$ we have $\{i\leq n\colon \exists x\in X\;(x(1_G)=i)\}=I$, and by passing to $\SH_G(m)$, for some $m<n$, if necessary, we may without loss of generality assume that $I=\{1,\ldots,n\}$. For every $X\in U$ set \[\begin{split} A(X):= \{ & \alpha\in \Act_G(\CC)\colon\\ & \text{for no clopen partition } \PP=\{P_1,\ldots,P_n\}\text{ of }\CC,\; \QQ(\alpha,\PP)= X\}.\end{split}\]

\begin{lemma}\label{lem:claimlemma}
For every $X\in U$, $A(X)$ is a dense $G_\delta$ set.
\end{lemma}

\begin{proof}[Proof of Lemma~\ref{lem:claimlemma}]
Fix a clopen partition $\PP=\{P_1,\ldots,P_n\}$ and we show that the set \[\AAA^\PP_X:=\{\alpha\in\Act_G(\CC)\colon \QQ(\alpha,\PP)\neq X\}\] is open and dense. In order to show that it is open, we use Proposition~\ref{prop:Qcontinuity} telling us that the map $\QQ(\cdot,\PP)$ is continuous. It follows that \[\AAA^\PP_X=\QQ^{-1}(\cdot,\PP)\Big(\SH_g(n)\setminus\{X\}\Big)\] is open as a preimage of an open set.

We now show that $\AAA^\PP_X$ is dense. Fix some open set $V\subseteq \Act_G(\CC)$. We claim that there is $\alpha\in V$ such that $\QQ(\alpha,\PP)\neq X$. 

We may suppose that $V$ is of the form $\NN_\beta^{F,\PP'}$, for some $\beta\in\Act_G(\CC)$, finite symmetric $F\subseteq G$ containing the unit $1_G$, and a clopen partition $\PP'\preceq\PP$. By Proposition~\ref{prop:Qmap-nbhds}~\eqref{it1-Qmap-nbhds}, setting $Y:=\QQ(\beta,\PP')$, we have $\QQ(\cdot,\PP')[V]=\NN_Y^F$. Let $\phi:(\PP')^G\rightarrow\PP^G$ be the factor map induced by the inclusion map $\phi_0:\PP'\rightarrow\PP$ defined so that for every $P\in\PP'$ we have $P\subseteq \phi_0(P)$. Since $X$ is not projectively isolated, there exists $Y'\in\NN_Y^F$ such that $\phi[Y']\neq X$. Again by Proposition~\ref{prop:Qmap-nbhds}~\eqref{it1-Qmap-nbhds}, there is some $\gamma\in V$ satisfying $\QQ(\gamma,\PP')=Y'$. However, then by Proposition~\ref{prop:Qmap-nbhds}~\eqref{it2-Qmap-nbhds} we get \[\QQ(\gamma,\PP)=\phi\big(\QQ(\gamma,\PP')\big)=\phi[Y']\neq X,\] which finishes the proof that $\AAA_X^\PP$ is dense since $V$ was arbitrary.\medskip

Now we just notice that there are only countably many partitions $\PP$ of $\CC$ into disjoint non-empty clopen $n$-many sets, the set of such denoted by $\mathbb{P}_n$, and therefore \[A(X)=\bigcap_{\PP\in\mathbb{P}_n} \AAA^\PP_X,\] which is dense $G_\delta$ by the Baire category theorem.

This finishes the proof of the lemma.
\end{proof}

In order to reach a contradiction, suppose now that $G$ does have the strong topological Rokhlin property, i.e. there is $\alpha\in\Act_G(\CC)$ whose conjugacy class is comeager in $\Act_G(\CC)$. Since the conjugacy class is dense it has to intersect the open set \[\UU:=\QQ^{-1}(\cdot,\PP)\big(U\big),\] where $\PP:=\{P_1,\ldots,P_n\}$ is an arbitrary partition of $\CC$ into disjoint non-empty clopen $n$-many sets. Without loss of generality, we assume that $\alpha\in\UU$. Set $X:=\QQ(\alpha,\PP)\in U$. Since by Lemma~\ref{lem:claimlemma}, $A(X)$ is dense $G_\delta$, it has to intersect the conjugacy class of $\alpha$. So there is some $\varphi\in\Homeo(\CC)$ such that \[\alpha':=\varphi \alpha \varphi^{-1}\in A(X).\] Set $\PP':=\{\varphi P_1,\ldots,\varphi P_n\}$. By definition, we have \[\QQ(\alpha',\PP')=\QQ(\alpha,\PP)=X,\] which contradicts that $\alpha'\in A(X)$. 

Applying Fact~\ref{fact:0-1-law}, we get that actually every conjugacy class is meager. This finishes the proof.
\end{proof}

\subsection{Establishing the strong topological Rokhlin property}\hfill\medskip

This subsection is devoted to the proof of the other implication of Theorem~\ref{thm:mainRokhlin}.\medskip

We assume that for each $n\geq 2$, the set of projectively isolated subshifts in $\SH_n(G)$ is dense and we prove that $G$ has the strong topological Rokhlin property.

We shall use the following proposition from \cite{BYMeTs} that is attributed to Rosendal. We are grateful to the anonymous referee for suggesting to apply this result in order to simplify the proof.
\begin{proposition}[Proposition 3.2 in \cite{BYMeTs}]\label{prop:BYMeTs}
Let $\mathbb{G}$ be a Polish group acting continuously and topologically transitively on a Polish space $X$. Then the action has a comeager orbit if and only if for every open $1_{\mathbb{G}}\in V\subseteq \mathbb{G}$ and every open $U\subseteq X$ there exists an open subset $U'\subseteq U$ such that for all pairs of further open subsets $W_1,W_2\subseteq U'$, we have $V\cdot W_1\cap W_2\neq\emptyset$.
\end{proposition}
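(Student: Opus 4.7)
The plan is to prove the two implications separately.

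For the forward direction, suppose $O=\mathbb{G}\cdot x_0$ is comeager. Since $O$ is non-meager, the Effros--Becker--Kechris open mapping theorem guarantees that the orbit map $g\mapsto g\cdot x_0$ is open onto $O$ with its subspace topology. Given an open $V\ni 1_{\mathbb{G}}$ and a non-empty open $U\subseteq X$, I would choose a symmetric open $V_0\ni 1$ with $V_0V_0\subseteq V$, use density of $O$ to pick $y_0\in O\cap U$, and invoke Effros to obtain an open $U''\subseteq X$ with $U''\cap O=V_0\cdot y_0$; set $U':=U\cap U''$. For any non-empty open $W_1,W_2\subseteq U'$, density of $O$ provides $y_i\in W_i\cap V_0y_0$, so $y_i=v_iy_0$ with $v_i\in V_0$, and therefore $y_2=(v_2v_1^{-1})\cdot y_1\in V\cdot W_1\cap W_2$ since $v_2v_1^{-1}\in V_0V_0\subseteq V$.

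For the backward direction, assume the stated criterion. An auxiliary lemma is that, inside the $U'\subseteq U$ provided by the hypothesis applied to $V$ and $U$, the set $D(V,U'):=\{x\in U':V\cdot x\text{ is dense in }U'\}$ is a dense $G_\delta$ in $U'$: if $(C_k)_k$ is a countable base of $U'$, then each $V^{-1}C_k\cap U'$ is open, and density in $U'$ follows from the hypothesis applied with $W_2=C_k$, which yields $W\cap V^{-1}C_k\neq\emptyset$ for every non-empty open $W\subseteq U'$. The main strategy is to show that the orbit equivalence relation $E\subseteq X\times X$ is comeager. Being the continuous image of $\mathbb{G}\times X$ under $(g,x)\mapsto(x,gx)$, $E$ is analytic and therefore has the Baire property; it is $\mathbb{G}\times\mathbb{G}$-invariant for the coordinatewise action; and the $\mathbb{G}\times\mathbb{G}$-action on $X\times X$ is topologically transitive because, picking $x_0$ with $\mathbb{G}\cdot x_0$ dense in $X$, the product $\mathbb{G}\cdot x_0\times\mathbb{G}\cdot x_0$ is dense in $X\times X$. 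Hence by the same $0$--$1$ law argument as in Fact~\ref{fact:0-1-law}, $E$ is meager or comeager in $X\times X$. In the comeager case, Kuratowski--Ulam produces a comeager orbit, unique by disjointness. It therefore remains to rule out that $E$ is meager, which the plan accomplishes by using the auxiliary lemma together with the hypothesis to show that for appropriate choices of $V\ni 1$ and non-empty open $U\subseteq X$, $E\cap(U'\times U')$ is non-meager in $U'\times U'$.

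The main obstacle is this last step. The hypothesis states precisely that the continuous map $(v,x)\mapsto(x,vx)$ on $\{(v,x)\in V\times U':vx\in U'\}$ has image meeting every basic open rectangle $W_1\times W_2\subseteq U'\times U'$, so that image is dense in $U'\times U'$ and contained in $E\cap(U'\times U')$. However, a continuous image of a Polish space may be dense without being non-meager, so this density must still be upgraded. The natural route is a fiber-wise Kuratowski--Ulam-type argument: for $x$ in the comeager set $D(V,U')$, the section $V\cdot x\cap U'$ is dense in $U'$, and combining this fiber-wise density with the Baire property of $E$ should yield non-meagerness of $E\cap(U'\times U')$, closing the loop.
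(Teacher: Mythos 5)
A preliminary remark: the paper contains no proof of this proposition at all --- it is imported verbatim from \cite{BYMeTs} and used as a black box in the proof of Theorem~\ref{thm:mainRokhlin} --- so your attempt can only be judged on its own merits. Your forward direction is correct and standard: a comeager orbit is non-meager, the Effros--Becker--Kechris theorem makes $V_0\cdot y_0$ relatively open in the orbit, and the symmetric $V_0$ with $V_0V_0\subseteq V$ closes the argument. Much of your backward direction is also sound: the auxiliary lemma that $D(V,U')$ is dense $G_\delta$ in $U'$ is proved correctly; the orbit equivalence relation $E$ is analytic, hence has the Baire property, is invariant under the coordinatewise $\mathbb{G}\times\mathbb{G}$-action, and that action is topologically transitive, so the topological $0$--$1$ law (as in Fact~\ref{fact:0-1-law}) applies and, in the comeager case, Kuratowski--Ulam does yield a comeager orbit.

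The gap is your final step, and it is fatal as stated. Knowing that for comeagerly many $x\in U'$ the section $E_x\cap U'\supseteq V\cdot x\cap U'$ is \emph{dense} in $U'$ gives no lower bound in category: Kuratowski--Ulam detects non-meagerness of a set with the Baire property through \emph{non-meagerness} of its sections, not their density, and a set with the Baire property all of whose sections are meager is itself meager --- dense sections can perfectly well be meager (e.g.\ countable). The Vitali relation of the translation action $\mathbb{Q}\curvearrowright\mathbb{R}$ illustrates that the inference pattern ``Baire property $+$ fiberwise dense $\Rightarrow$ non-meager'' is simply false: every section is dense, yet $E$ is a countable union of nowhere dense lines (this action fails the hypothesis, since $V=\{0\}$ is open in $\mathbb{Q}$, but it defeats the only general principle your last step invokes). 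Worse, here $E_x=\mathbb{G}\cdot x$, so ``some non-meager section'' is literally ``some non-meager orbit'', which by your own $0$--$1$ law reasoning is equivalent to the conclusion: the plan is circular exactly where the work lies. What is missing is an Effros-type upgrade. Your auxiliary lemma, applied to a countable basis $(V_n)$ at $1_{\mathbb{G}}$, shows that the generic $x$ satisfies $U'_n\subseteq\overline{V_n\cdot x}$ for suitable non-empty open $U'_n$, i.e.\ every $\overline{V\cdot x}$ has non-empty interior; the heart of the actual proof is to show that this local condition forces $\mathbb{G}\cdot x$ to be non-meager (via a Banach--Mazur/back-and-forth construction that applies the hypothesis at shrinking scales $V_n$, as in \cite{BYMeTs}), after which topological transitivity and the $0$--$1$ law promote non-meager to comeager. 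That ingredient cannot be replaced by the Baire property of $E$ alone.
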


In our case, $\mathbb{G}=\Homeo(\CC)$, $X=\Act_G(\CC)$, and the action is by conjugation which is topologically transitive, as mentioned in Introduction and proved in \cite[Proposition 1.2]{Hoch12} (we recall that the proposition is stated for $\Int^d$ but works for any countable group).\medskip

Fix an open neighborhood $\mathrm{Id}\in V\subseteq\Homeo(\CC)$ and an open set $U\subseteq \Act_G(\CC)$. Since the topology of uniform convergence on $\Homeo(\CC)$ coincides with the topology of pointwise convergence on $\mathrm{Clopen}(\CC)$ and by Lemma~\ref{lem:basicopennbhds}, without loss of generality we may assume that there are a clopen partition $\PP$ of $\CC$, a finite symmetric subset $F\subseteq G$ containing the unit, and an action $\alpha\in\Act_G(\CC)$ such that \[V=\{\phi\in\Homeo(\CC)\colon \forall P\in \PP\; (\phi[P]=P)\}\quad\text{and}\quad U=\NN_\alpha^{F,\PP}.\] Set $X':=\QQ(\alpha,\PP)\subseteq \PP^G$. Applying Proposition~\ref{prop:Qmap-nbhds}\eqref{it1-Qmap-nbhds} we get that $\QQ(\cdot,\PP)[\NN_\alpha^{F,\PP}]=\NN_{X'}^F$. By the assumption, there exists a projectively isolated subshift $X\in\NN_{X'}^F$. Using Proposition~\ref{prop:Qmap-nbhds}\eqref{it1-Qmap-nbhds} we get $\alpha'\in\NN_\alpha^{F,\PP}$ such that $\QQ(\alpha',\PP)=X$. Since $\NN_\alpha^{F,\PP}=\NN_{\alpha'}^{F,\PP}$, we may without loss of generality assume that $\alpha=\alpha'$.\medskip

By definition and using Lemma~\ref{lem:SFTnbhrds} there exists a subshift of finite type $Y\subseteq B^G$, for some non-trivial finite set $B$, a finite symmetric set $F'\supseteq F$, and a factor map $\phi:Y\to X$ such that
\begin{itemize}
	\item for every $Y'\in\NN_Y^{F'}$ we have $Y'\subseteq Y$;
	\item for every $Y'\in\NN_Y^{F'}$ we have $\phi[Y']=X$.
\end{itemize}
By Lemma~\ref{lem:standardformoffactormaps}, without loss of generality, we may assume that $\phi$ is induced by a map $\phi_0:B\to\PP$ between the alphabets. We may identify $B$ with a clopen partition $\PP'\preceq\PP$ where for $Q\in\PP'$ and $P\in\PP$ we have $Q\subseteq P$ if and only if $\phi_0(Q)=P$.

By Proposition~\ref{prop:Qmap-nbhds}\eqref{it3-Qmap-nbhds} we get $\beta\in\Act_G(\CC)$ such that $\QQ(\beta,\PP')=Y$ and $\beta\in\NN_\alpha^{F,\PP}$. Moreover, by Proposition~\ref{prop:Qmap-nbhds}\eqref{it1-Qmap-nbhds} we have $\QQ(\cdot,\PP')[\NN_\beta^{F',\PP'}]=\NN_Y^{F'}$. Let us set $U':=\NN_\beta^{F',\PP'}$. Since $\beta\in\NN_\alpha^{F,\PP}=U$, $F'\supseteq F$, and $\PP'\preceq\PP$, we get that $U'\subseteq U$.

To finish the proof, using Proposition~\ref{prop:BYMeTs}, we need to check that for all open sets $W_1,W_2\subseteq U'$ we have $V\cdot W_1\cap W_2\neq\emptyset$. Choose therefore some open $W_1,W_2\subseteq U'$. By Lemma~\ref{lem:basicopennbhds} and by refining the sets if necessary  we may assume, without loss of generality, that there are a clopen partition $\PP''\preceq\PP'$, a finite symmetric set $F''\supseteq F'$, and actions $\gamma_1,\gamma_2\in\Act_G(\CC)$ such that \[W_1=\NN_{\gamma_1}^{F'',\PP'}\quad\text{and}\quad W_2=\NN_{\gamma_2}^{F'',\PP''}.\] Since $\gamma_1,\gamma_2\in U'$ observe that $\QQ(\gamma_1,\PP)=\QQ(\gamma_2,\PP)=X$.

Consider now  $\gamma_1\times\gamma_2$, the diagonal action of $G$ on $\CC\times\CC$. Denote by $P_1:\CC\times\CC\to\CC$, resp. $P_2:\CC\times\CC\to\CC$ the projection onto the first, resp. the second coordinate and let us consider the closed $G$-invariant subspace \[W:=\{(x,y)\in\CC\times\CC\colon Q_\PP^{\gamma_1}(x)=Q_\PP^{\gamma_2}(y)\}\subseteq \CC\times\CC.\] The set $W$ is non-empty. In fact, we have $P_1[W]=P_2[W]=\CC$, therefore $Q_\PP^{\gamma_1}\circ P_1[W]=Q_\PP^{\gamma_2}\circ P_2[W]=X$. Indeed, for any $x\in\CC$ set $z:=Q_\PP^{\gamma_1}(x)\in X$ and find, since $\QQ(\gamma_2,\PP)=X$, an element $y\in\CC$ such that $Q_\PP^{\gamma_2}(y)=z$. Then $(x,y)\in W$.

Enumerate the partition $\PP''$ as $\{P_1,\ldots,P_n\}$. In what follows we shall without loss of generality assume that $W$ has no isolated points, thus it is homeomorphic to $\CC$, as otherwise we could consider the product of $W$ with $\CC$ equipped with the trivial action of $G$. We shall consider several clopen partitions of $W$ and $\CC$ respectively.

First, for every $i\leq n$ set \[U_i:=\bigcup\big\{(P_i\times P_j)\cap W\colon j\leq n\big\}\;\text{and}\;V_i:=\bigcup\big\{(P_j\times P_i)\cap W\colon j\leq n\big\}.\] 

Notice that both $\{U_i\colon i\leq n\}$ and $\{V_i\colon i\leq n\}$ are clopen partitions of $W$. Set $Z_1:=\QQ(\gamma_1,\PP'')$ and $Z_2:=\QQ(\gamma_2,\PP'')$. For each $p\in (Z_1)_{F''}$ set \[D_p^1:=\big\{(x,y)\in W\colon \forall f\in F''\;\big(\gamma_1(f^{-1})(x)\in P_i\text{ if and only if }p(f)=P_i\big)\big\},\] which is a clopen subset of $W$ and \[R_p^1:=\big\{z\in\CC\colon \forall f\in F''\;\big(\gamma_1(f^{-1})(z)=P_i\text{ if and only if }p(f)=P_i\big)\big\},\] which is a clopen subset of $\CC$. Then $\{D_p^1\colon p\in (Z_1)_{F''}\}$, resp. $\{R_p^1\colon p\in (Z_1)_{F''}\}$ are clopen partitions of $W$, resp. of $\CC$.

For each $p'\in (Z_F)_{F''}$ we define analogously the clopen sets $D_{p'}^2$ and $R_{p'}^2$ of $W$ and $\CC$ respectively.\medskip
 
Let $\psi_1:W\to\CC$ be any homeomorphism satisfying $\psi_1[D_p^1]=R_p^1$, for every $p\in (Z_1)_{F''}$. We define $\lambda_1\in\Act_G(\CC)$ by conjugating $\gamma_1\times\gamma_2\upharpoonright W$ by $\psi_1$, i.e. for $g\in G$ and $z\in\CC$ we have $\lambda_1(g)(x):=\psi_1\circ(\gamma_1\times\gamma_2(g))\circ\psi_1^{-1}(z)$. We analogously define $\psi_2:W\to\CC$ using the clopen partitions $\{D_p^2\colon p\in (Z_2)_{F''}\}$ and $\{R_p^2\colon p\in (Z_2)_{F''}\}$ of $W$ and $\CC$ respectively, and the action $\lambda_2$ which is the conjugate of $\gamma_1\times\gamma_2\upharpoonright W$ by $\psi_2$. The same arguments as in the proof of Proposition~\ref{prop:Qmap-nbhds}, which are left to the reader, show that $\lambda_1\in\NN_{\gamma_1}^{F'',\PP''}=W_1$, resp. $\lambda_2\in\NN_{\gamma_2}^{F'',\PP''}=W_2$. Since clearly $\psi_2\circ\psi_1^{-1}\lambda_1 \psi_1\circ\psi_2^{-1}=\lambda_2$, if we show that $\psi_2\circ\psi_1^{-1}\in V$ we will have verified that $V\cdot W_1\cap W_2\neq\emptyset$.\medskip

Pick any $z\in\CC$ and let $A\in\PP$ be such that $z\in A$. We show that $\psi_2\circ\psi_1^{-1}(z)\in A$. Set $(x,y):=\psi_1^{-1}(z)\in W$. Let $p\in (Z_1)_{F''}$ be such that $z\in R_p^1$ and let $P_i\in\PP''$ be such that $p(1_G)=P_i$. It follows that $P_i\subseteq A$. We have $z\in P_i$ and therefore, by the choice of $\psi_1$, we have that $x\in D_p^1$, thus also $x\in P_i\subseteq A\in\PP$. Since $Q_\PP^{\gamma_1}(x)=Q_\PP^{\gamma_2}(y)$, it follows that $y\in A$. This implies that there is $p'\in (Z_2)_{F''}$ such that $p'(1_G)=P_j\subseteq A$, for some $j\leq n$, and that $y\in D_{p'}^2$. Then again, by definition of $\psi_2$, we get $\psi_2(x,y)\in R_{p'}^2$. So in total we get \[\psi_2\circ\psi_1^{-1}(z)=\psi_2(x,y)\in R_{p'}^2\subseteq P_j\subseteq A\] which is what we wanted to prove. This finishes the proof.$\qed$
\bigskip

Theorem~\ref{thm:mainRokhlin} jointly with the recent paper \cite{PavSchmie} give yet another proof of the existence of a generic action of $\Int$ on the Cantor space. Indeed, in \cite[Theorem 3.6]{PavSchmie} they prove that isolated subshifts are dense in $\SH_\Int(n)$, for $n\geq 2$. Notice also that by Theorem~\ref{thm:mainRokhlin}, every finite group has the strong topological Rokhlin property since actually every subshift over a finite group is isolated.

\section{Free products with the strong topological Rokhlin property}\label{sect:freeproducts}
With this section we start with our applications of Theorem~\ref{thm:mainRokhlin}. Our goal is to prove Theorem~\ref{thm:intro2}. For this we need to introduce some new notions.
\begin{definition}\label{def:automaton}
	Let $A$ be a finite non-trivial set (thought of as a set of colors in this context) and let $G=\bigstar_{i\leq n} G_i$, where each for each $i\leq n$, $G_i$ is either finite or infinite cyclic. Let $S\subseteq G$ be a finite symmetric set consisting of the generator $g_i\in G_i$ and its inverse $g_i^{-1}$ if $G_i$ is infinite cyclic, and of $G_j\setminus\{1\}$ if $G_j$ is finite. A \emph{coloring automaton} is a map $\Omega: S\times A\rightarrow A$.
\end{definition}

\begin{construction}\label{con:automaton}
A coloring automaton $\Omega$ over $G$ as above and $A$ produces an element $x\in A^G$ which is uniquely defined by chosing the starting element $g\in G$ and the starting color $a\in A$, and then coloring the rest of the elements by the fixed rule given by the map $\Omega$. Moreover, the automaton keeps track on how it moves along the Cayley graph of $G$ with respect to $S$.

More formally, we proceed as follows. We define \[B:=A\times \{\leftarrow,\rightarrow,\emptyset\}^S.\] For any $b\in B$, we denote by $b_1$, resp. $b_2$ its projection to the first coordinate, which is an element of $A$, resp. its projection to the second coordinate, which is an element of $\{\leftarrow,\rightarrow,\emptyset\}^S$. Upon choosing the initial group element and color, we define an element $x\in A^G$ and an element $\tilde{x}\in B^G$ such that for all $g\in G$, $\tilde{x}(g)_1=x(g)$. \bigskip

\noindent{\bf Step 1.} Pick any $g\in G$ and $a\in A$. We set $x(g)=a$. For every $s\in S$ we set \[x(gs):=\Omega(s,a),\] and in order to keep track of how the automaton moves, we set $\tilde{x}(g)_1=a$ and for each $s\in S$ we set $\tilde{x}(g)_2(s)=\rightarrow$. Then for each $s\in S$ we set \[\tilde{x}(gs)_1:=x(gs)\text{ and }\tilde{x}(gs)_2(s^{-1})=\leftarrow.\] Additionally, for every other $s'\neq s^{-1}\in S$ we set \[\tilde{x}(gs)_2(s'):=\begin{cases}
	\emptyset & \text{if }s,s'\text{ belong to the same finite group }G_j;\\
	\rightarrow & \text{otherwise.}
\end{cases}
\]\medskip

\noindent{\bf General Step 2.} Assume that the automaton has defined $x(h)$ and $\tilde{x}(h)$, for some $h\in G$, arriving to $h$ from some $h'$ in the direction of some $s'\in S$, i.e. $h=h's'$. Then we have two cases.
\begin{enumerate}
	\item The element $s'$ does not belong to any finite group $G_j$, i.e. it is the generator or its inverse of some infinite cyclic group $G_i$. Then for every $s\neq (s')^{-1}\in S$ we set \[x(hs):=\Omega(s,x(h)),\] \[\tilde{x}(hs)_1:=x(hs)\text{ and }\tilde{x}(hs)_2(s^{-1}):=\leftarrow.\] Moreover, for every $s''\neq s^{-1}\in S$ we set \[\tilde{x}(hs)_2(s''):=\begin{cases}
		\emptyset & \text{if }s,s''\text{ belong to the same finite group }G_j;\\
		\rightarrow & \text{otherwise.}
	\end{cases}
	\]
	\item The element $s'$ belongs to some finite group $G_j$. Then for every $s\in G_j$, $x(hs)$ and $\tilde{x}(hs)$ have already been defined since $hs=h's''$ for $s''=s's\in G_j$. For $s\notin G_j$ we set as before \[x(hs):=\Omega(s,x(h)),\] \[\tilde{x}(hs)_1:=x(hs)\text{ and }\tilde{x}(hs)_2(s^{-1}):=\leftarrow.\] Moreover, for every $s''\neq s\in S$ we set \[\tilde{x}(hs)_2(s''):=\begin{cases}
		\emptyset & \text{if }s,s''\text{ belong to the same finite group }G_k;\\
		\rightarrow & \text{otherwise.}
	\end{cases}
	\]
\end{enumerate}
\end{construction}
Let us denote by $X_\Omega$, resp. $\tilde{X}_\Omega$ the closure of the subset of $A^G$ of all those elements of $A^G$, resp. closure of the subset of $B^G$ of all those elements of $B^G$, produced by the coloring automaton by the procedure above. The coordinate projection $p:B\rightarrow A$ induces a factor map $P:B^G\rightarrow A^G$.

\begin{theorem}\label{thm:automataproduceprojisolatedshfts}
	Let $A$ be a non-trivial finite set, and $G$ and $\Omega$ be a group and an automaton as in Definition~\ref{def:automaton}. Then $\tilde{X}_\Omega$ is an isolated subshift and $X_\Omega$ is a strongly projectively isolated subshift which is witnessed by the isolated factor map $P$, i.e. $P[\tilde{X}_\Omega]=X_\Omega$.
\end{theorem}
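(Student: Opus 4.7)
The plan is to establish two things: (I) the subshift $\tilde{X}_\Omega \subseteq B^G$ is isolated, and (II) the map $P$ exhibits $X_\Omega$ as a factor of $\tilde{X}_\Omega$, so that (I) combined with (II) yields strong projective isolation of $X_\Omega$ directly from Definition~\ref{def:projisolated}.

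For (I), I would first observe that $\tilde{X}_\Omega$ is a subshift of finite type with defining window $F_0 := \{1_G\} \cup S$: the alphabet $B = A \times \{\leftarrow,\rightarrow,\emptyset\}^S$ was designed precisely so that all the rules of Construction~\ref{con:automaton} become window-$F_0$ conditions. The allowed $F_0$-patterns are those satisfying (a) direction symmetry at each edge ($\rightarrow$ pairs with $\leftarrow$, and $\emptyset$ with $\emptyset$), (b) the colour transition $\tilde{x}(s)_1 = \Omega(s,\tilde{x}(1_G)_1)$ whenever $\tilde{x}(1_G)_2(s) = \rightarrow$, and (c) the local ``profile'' of the direction vector $\tilde{x}(g)_2$: either all $\rightarrow$ (the source case), or exactly one $\leftarrow$ at some $s^{-1}$, with $\emptyset$ on the remaining generators of $G_j$ if $s \in G_j$ is a finite-factor generator and $\rightarrow$ elsewhere, or $\rightarrow$ everywhere else if $s$ belongs to a cyclic factor. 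A straightforward induction confirms that any $\tilde{x} \in B^G$ satisfying these rules globally is a shift-limit of a valid automaton output, hence lies in $\tilde{X}_\Omega$.

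Next I would apply Lemma~\ref{lem:isolatedcharacterization} with the partition $\PP := \{C_p(\tilde{X}_\Omega) : p \in (\tilde{X}_\Omega)_{F_0}\}$. Let $Y \subseteq \tilde{X}_\Omega$ be a subshift meeting every $C_p \in \PP$; I will show $Y = \tilde{X}_\Omega$. Among the allowed $F_0$-patterns there is, for each $a \in A$, a \emph{starting-vertex pattern} $p_a$ with $p_a(1_G) = (a, d_0)$, where $d_0(s) = \rightarrow$ for every $s \in S$; the values $p_a(s)$ at $s \in S$ are then uniquely forced by the SFT rules. Iterating these rules outward along the reduced-word normal form of $g \in G = \bigstar_{i\leq n} G_i$, both the colour and the direction data at every $g$ are forced: the colour by successive applications of $\Omega$ along the unique reduced path from $1_G$ to $g$, and the direction labels by direction symmetry combined with the finite-factor bookkeeping. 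Hence $C_{p_a}(\tilde{X}_\Omega) = \{\tilde{x}^{(a)}\}$ is a singleton, namely the automaton trajectory starting at $1_G$ with colour $a$. Therefore $\tilde{x}^{(a)} \in Y$ for every $a \in A$; by $G$-invariance $Y$ contains every shifted trajectory $g \cdot \tilde{x}^{(a)}$, and by closedness $Y$ contains the closure of this family, which is $\tilde{X}_\Omega$ by definition.

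For (II), the coordinate projection $p_1 : B \to A$ induces a continuous $G$-equivariant map $P : B^G \to A^G$ whose image on $\tilde{X}_\Omega$ equals $X_\Omega$ by construction, since both sides are the shift-closure of the same family of automaton outputs (with or without the direction-label bookkeeping). Hence $X_\Omega$ is a factor of the isolated subshift $\tilde{X}_\Omega$ via $P$, i.e.\ strongly projectively isolated with $P$ as the isolated factor map. The most delicate point of the argument is the singleton assertion $|C_{p_a}(\tilde{X}_\Omega)| = 1$: it rests on the tree-like normal form of the free product, which guarantees that every $g \in G$ is reached by a unique sequence of $\rightarrow$-moves from $1_G$, and on the finite-factor bookkeeping via the symbol $\emptyset$, which prevents any branch of this propagation from re-entering an already coloured coset and thereby producing a second valid extension.
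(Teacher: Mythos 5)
Your proposal is correct and follows essentially the same route as the paper: identify $\tilde{X}_\Omega$ as an SFT with window $\{1_G\}\cup S$, observe that the ``all-arrows-$\rightarrow$'' source pattern $p_a$ forces the entire configuration (so that any subshift realizing every $F_0$-pattern must contain each trajectory $\tilde{x}^{(a)}$ and hence, by taking orbit closures, all of $\tilde{X}_\Omega$), and note that $P[\tilde{X}_\Omega]=X_\Omega$ is immediate from the construction. The only cosmetic differences are that you invoke Lemma~\ref{lem:isolatedcharacterization} where the paper verifies $\NN_{\tilde X_\Omega}^{F}=\{\tilde X_\Omega\}$ directly, and that you make the singleton claim $|C_{p_a}|=1$ explicit where the paper packages the same forcing into its Case~1/Case~2 analysis.
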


\begin{proof}
Fix the notation as in the statement. We also follow the notation from Construction~\ref{con:automaton}; in particular, we set $B:=A\times \{\leftarrow,\rightarrow,\emptyset\}^S$. The map $P$ is therefore induced by the projection map $p:B\rightarrow A$. It is clear that $P[\tilde{X}_\Omega]=X_\Omega$, so we only need to prove that $\tilde{X}_\Omega$ is isolated.

First we notice that $\tilde{X}_\Omega$ is of finite type. We define a subshift of finite type $X\subseteq B^G$ and show that $X=\tilde{X}_\Omega$. We set the defining window to be $F:=S\cup\{1\}$. A pattern $p\in B^F$ is allowed if it is produced by the automaton $\Omega$, i.e. if and only if $p\in (\tilde{X}_\Omega)_F$. Clearly, $\tilde{X}_\Omega\subseteq X$. We show the converse. Pick $x\in X$. We distinguish two cases.\smallskip

\noindent{\bf Case 1.} We have that there is $g\in G$ such that $x(g)_2(s)=\rightarrow$ for all $s\in S$. Then it is straightforward to check that the rules imposed by the allowed patterns of $X$ force $x$ to correspond to an element produced by $\Omega$ when the starting vertex is $g\in G$ and the starting color is $x(g)_1$. It follows by definition that $x\in \tilde{X}_\Omega$.\smallskip

\noindent{\bf Case 2.} For every $g\in G$ there is one, and consequently by the rules imposed by allowed patterns \emph{exactly one}, $s\in S$ such that $x(g)_2(s)=\leftarrow$. Pick an arbitrary $g\in G$ and set $g_1:=g$. By the assumption there is unique $s\in S$ such that $x(g)_2(s)=\leftarrow$. Set $g_2:=gs$. Assuming that $g_{n-1}$, for $n\in\Nat$ has been defined, we set $g_n:=g_{n-1}s$, where $s$ is the unique element of $S$ such that $x(g_{n-1})_2(s)=\leftarrow$. This defines an infinite path $(g_n)_{n\in\Nat}$. This path is unique up to finite initial segment. Indeed, choosing a different starting element $h\in G$ there is a unique path $h_1=h,\ldots,h_n=g$ such that $h_i^{-1}h_i\in S$, for $1<i\leq n$, and such that either for each $i<n$ we have $x(h_i)_2(h_{i+1}^{-1}h_i)=\leftarrow$, or for each $i<n$ we have $x(h_i)_2(h_{i+1}^{-1}h_i)=\rightarrow$. Notice that such a path between $h$ and $g$ is unique although there may be more than one path $h'_1=h,\ldots,h'_m=g$ such that $(h'_i)^{-1}h'_i\in S$, for $1<i\leq m$.

Now for every $n\in\Nat$ we define an element $x_n\in \tilde{X}_\Omega$, which is an element of $B^G$ produced by $\Omega$ with the starting element $g_n$ and the starting color $x(g_n)_1$. It is again straightforward to check that $\lim_{n\to\infty} x_n=x$, so we get that $x\in\tilde{X}_\Omega$ as desired.\medskip

It remains to show that $\tilde{X}_\Omega=X$ is isolated. We claim that $\NN_X^F=\{X\}$. Indeed, this follows from the facts that
\begin{itemize}
	\item for every $X'\in\NN_X^F$ we have $X'\subseteq X$;
	\item every $X'\subseteq X$ that contains the pattern $p_a\in B^F$, which is defined as $x_a\upharpoonright F$, where $x_a\in\tilde{X}_\Omega$ is the element of $B^G$ produced by $\Omega$ with the starting element $1_G$ and the starting color $a$, must be equal to $X$. Indeed, $X=\tilde{X}_\Omega$ is by definition the closure of the set of configurations produced by $\Omega$, which is equal to the smallest subshift containing configurations produced by $\Omega$ with the starting element $1_G$ and all possible colors $a\in A$.
\end{itemize}
\end{proof}

\begin{lemma}\label{lem:nbhdisomorphism}
	Let $A$ and $B$ be non-trivial finite sets, $G$ be a countable group, and $X\subseteq A^G$ and $Y\subseteq B^G$ be two subshifts of finite type such that there is an isomorphism $\phi:X\rightarrow Y$. Then $\phi$ canonically induces a bijection $\Phi$ between an open neighborhood $\NN_X$ of $X$ and $\NN_Y$ so that for every $Z\in\NN_X$, $Z$ is isomorphic to $\Phi(Z)$.
\end{lemma}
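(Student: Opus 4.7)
The plan is to define $\Phi$ in the obvious way, by pushing subshifts forward through $\phi$ itself, i.e.\ $\Phi(Z) := \phi[Z]$, and then do the bookkeeping needed to identify matching open neighborhoods. The main ingredients are the Curtis--Hedlund--Lyndon theorem and Lemma~\ref{lem:SFTnbhrds}.

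First, by the Curtis--Hedlund--Lyndon theorem applied to both $\phi$ and $\phi^{-1}$, there exist finite sets $F_1,F_2\subseteq G$ and maps $\phi_0: X_{F_1}\to B$ and $\psi_0: Y_{F_2}\to A$ inducing $\phi$ and $\phi^{-1}$ respectively. Since $X$ and $Y$ are of finite type, Lemma~\ref{lem:SFTnbhrds} lets us enlarge $F_1$ to a finite set $F\supseteq F_1$ so that every element of $\NN_X^F$ is a subshift of $X$, and similarly enlarge $F_2$ to $F'\supseteq F_2$ with the analogous property for $Y$. In particular, Lemma~\ref{lem:CHL} applies, so the block code $\phi_0$ unambiguously produces a $G$-equivariant map on each $Z\in\NN_X^F$, and symmetrically for $\psi_0$.

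Second, define $\tilde{\Phi}: \NN_X^F\to \SH_G(B)$ by $\tilde{\Phi}(Z):=\phi[Z]$ and $\tilde{\Psi}:\NN_Y^{F'}\to\SH_G(A)$ by $\tilde{\Psi}(W):=\phi^{-1}[W]$. Both maps are well-defined (the image of a subshift under a block code is a subshift) and continuous: for any finite $E\subseteq G$, the $E$-patterns of $\tilde{\Phi}(Z)$ are determined by the $E\cdot F_1$-patterns of $Z$. Since $\phi:X\to Y$ is a genuine bijection and each $Z\in\NN_X^F$ is contained in $X$, we have $\tilde{\Psi}\circ\tilde{\Phi}=\mathrm{Id}$ on $\NN_X^F$, and symmetrically $\tilde{\Phi}\circ\tilde{\Psi}=\mathrm{Id}$ on $\NN_Y^{F'}$.

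Third, to pass from these extension maps to a genuine bijection between open neighborhoods, set
\[\NN_X:=\NN_X^F\cap \tilde{\Phi}^{-1}[\NN_Y^{F'}]\quad\text{and}\quad \NN_Y:=\NN_Y^{F'}\cap\tilde{\Psi}^{-1}[\NN_X^F].\]
Both are open by continuity of $\tilde{\Phi},\tilde{\Psi}$, and contain $X$, $Y$ respectively since $\tilde{\Phi}(X)=Y\in \NN_Y^{F'}$ and $\tilde{\Psi}(Y)=X\in\NN_X^F$. A short direct chase using the inverse relation shows that $\tilde{\Phi}$ restricts to a bijection $\Phi:\NN_X\to\NN_Y$ with inverse $\tilde{\Psi}\upharpoonright\NN_Y$. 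Finally, for every $Z\in\NN_X$ the restriction $\phi\upharpoonright Z:Z\to\phi[Z]=\Phi(Z)$ is a $G$-equivariant homeomorphism, since $\phi:X\to Y$ is one and $Z\subseteq X$, $\Phi(Z)\subseteq Y$.

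I do not foresee a serious obstacle; the argument is a routine combination of Curtis--Hedlund--Lyndon with Lemma~\ref{lem:SFTnbhrds}. The only subtlety is that one cannot expect $\tilde{\Phi}$ to send the basic open set $\NN_X^F$ exactly onto a basic open set of the form $\NN_Y^{F'}$, which is precisely why one must intersect with the preimages to obtain a pair of matching open neighborhoods.
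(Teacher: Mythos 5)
Your proposal is correct and follows essentially the same route as the paper: the paper also defines $\Phi(Z):=\phi[Z]$ on a basic neighborhood $\NN_X^F$ where $F$ contains both the defining window of $X$ and the Curtis--Hedlund--Lyndon window of $\phi$, and sets $\NN_Y:=\Phi[\NN_X]$, leaving the verification that this image is open to the reader. Your symmetric intersection with the preimages under $\tilde{\Phi}$ and $\tilde{\Psi}$ is simply an explicit (and clean) way of supplying that omitted openness check.
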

\begin{proof}
	Let us fix $A$, $B$, $G$, $X$, $Y$, and $\phi:X\rightarrow Y$ as in the statement. We may suppose that there is a finite set $F\subseteq G$ such that $F$ is the defining window for $X$ and $\phi$ is determined by a map $f:X_F\rightarrow B$. Set $\NN_X:=\NN_X^F$ and for $Z\in\NN_X$ set \[\Phi(Z):=\phi[Z].\] 
	We leave to the reader the straightforward verification that for every $Z\in\NN_X$, $\Phi(Z)$ is isomorphic to $Z$, and that $\NN_Y:=\Phi[\NN_X]$ is an open neighborhood of $Y$.
\end{proof}

\begin{definition}
	Let $G$ and $H$ be countable groups, $A$ be a non-trivial finite set, and $X\subseteq A^G$, $Y\subseteq A^H$ subshifts. The \emph{free product} $X\ast Y$ of $X$ and $Y$ is a subshift of $A^{G\ast H}$ defined as \[\{x\in A^{G\ast H}\colon \forall g\in G\ast H\; (gx\upharpoonright G\in X\wedge gx\upharpoonright H\in Y)\}.\] Equivalently, if $P_X$, resp. $P_Y$ are the set of forbidden patterns of $X$, resp. of $Y$, then $X\ast Y$ is defined as the subshift of $A^{G\ast H}$ whose set of forbidden patterns is $P_X\cup P_Y$.
	
	In particular, if $X$ and $Y$ are subshifts of finite type, then so is $X\ast Y$.
\end{definition}

\begin{definition}
	Let $G$ be a countable group, $A$ and $B$ be non-trivial finite sets, and let $X\subseteq A^G$ and $Y\subseteq B^G$ be subshifts. Say that a continuous $G$-equivariant map $\phi:Y\rightarrow X$ is \emph{basic} if it is induced by a map $\phi_0:B\rightarrow A$ between the respective alphabets.
\end{definition}
\begin{lemma}\label{lem:freeprodofshifts}
	Let $G$ and $H$ be countable groups, $A$ be a non-trivial finite set, and $X\subseteq A^{G\ast H}$ be a subshift of finite type. Then there are a non-trivial finite set $B$ and a basic isomorphism $\phi$ between a free product $V\ast W$ and $X$, where $V\subseteq B^G$ and $W\subseteq B^H$ are subshifts of finite type. Moreover, if $\NN$ is an open neighborhood of $X$, then $\phi$ may be chosen so that it induces a bijection $\Phi$ between the open neighborhoods $\NN':=\NN_{V\ast W}^{\{1\}}$ of $V\ast W$ and $\Phi[\NN']\subseteq \NN$ of $X$ such that for all $Y\in\NN'$, $Y$ and $\Phi(Y)$ are isomorphic.
\end{lemma}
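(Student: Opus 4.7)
The plan is to use the Rauzy graph representation from Proposition~\ref{prop:SFTprop} with a generating set chosen to respect the free product structure of $G \ast H$. Let $F \subseteq G \ast H$ be a finite defining window for $X$. Each element of $F$ has a unique normal form alternating between non-identity elements of $G$ and $H$; I choose $S_G \subseteq G \setminus \{1\}$ (resp.\ $S_H \subseteq H \setminus \{1\}$) to be a finite symmetric set containing every $G$-letter (resp.\ $H$-letter) appearing in such normal forms of elements of $F$. Then $\langle F \rangle \subseteq \langle S_G \cup S_H \rangle$, and Proposition~\ref{prop:SFTprop} applied with $S := S_G \cup S_H$ yields an $S$-Rauzy graph $\VV = (B, (E_s)_{s \in S})$ together with a basic isomorphism $X_{\VV} \cong X$ induced by some map $B \to A$.

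I then take $V \subseteq B^G$ to be the SFT associated with the $S_G$-Rauzy graph $(B, (E_s)_{s \in S_G})$ and $W \subseteq B^H$ the analogous SFT on $H$ built from $(B, (E_s)_{s \in S_H})$; each individual edge relation has no sources or sinks, so both are valid Rauzy graphs. The structural observation is that the forbidden patterns of $V \ast W$ are exactly the union of those of $V$ and of $W$, namely pairs $(b_1, b_2) \notin E_s$ along $G$-edges for $s \in S_G$ and along $H$-edges for $s \in S_H$, and these coincide with the forbidden patterns of $X_{\VV}$. Hence $V \ast W = X_{\VV}$, and composing with the Rauzy graph isomorphism produces the basic isomorphism $\phi \colon V \ast W \to X$ induced by some $\phi_0 \colon B \to A$.

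For the moreover part, given $\NN = \NN_X^{F_0}$ I would enlarge $S_G$ and $S_H$ so that $F_0 \subseteq \langle S_G \cup S_H \rangle$, and then refine the Rauzy graph so that its vertex set $B$ directly encodes $X$-patterns over a window containing $F_0$, i.e.\ $B = X_{F_0'}$ for a suitable $F_0' \supseteq F_0$ as in Proposition~\ref{prop:SFTprop}. The map $\Phi(Y) := \phi[Y]$ is then well-defined by the basic action of $\phi_0$ on any $Y \subseteq B^{G \ast H}$. The main obstacle, and the step I expect to require the most care, is verifying simultaneously that $\Phi[\NN_{V \ast W}^{\{1\}}] \subseteq \NN_X^{F_0}$, that $\Phi$ is injective on $\NN_{V \ast W}^{\{1\}}$, and that $Y \cong \Phi(Y)$ for every $Y$ in that neighborhood; I would attack this by exploiting the fine letter-encoding of $B$ (which forces consistency between $F_0$-patterns of $\phi[Y]$ and those of $X$) together with a pattern-coding analysis of $\phi$ in the spirit of Lemma~\ref{lem:nbhdisomorphism}.
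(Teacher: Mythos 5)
Your proposal follows essentially the same route as the paper: both pass to a nearest-neighbour (Rauzy-graph) presentation of $X$ over an alphabet $B$ via Proposition~\ref{prop:SFTprop}, with the generating set split between $G$ and $H$, define $V$ and $W$ from the $G$-edge and $H$-edge relations respectively, and observe that the union of their forbidden patterns recovers the recoded $X$, so that $V\ast W$ is basically isomorphic to $X$. For the ``moreover'' part the paper does exactly what you sketch but in the cleaner order: it first recodes via Lemma~\ref{lem:SFTfactorofSFT} so that $\NN$ becomes the single-letter neighbourhood $\NN_X^{\{1\}}$ before running the construction, after which the verification you flag as delicate reduces immediately to Lemma~\ref{lem:nbhdisomorphism}.
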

\begin{proof}
	Fix $G$ $H$, $A$, and $X$ as in the statement. By Proposition~\ref{prop:SFTprop}, there are finite sets $S_G\subseteq G$ and $S_H\subseteq H$, a non-trivial finite set $B$, and a subshift $X'\subseteq B^{G\ast H}$ isomorphic to $X$ so that for $x\in B^{G\ast H}$ we have $x\in X'$ if and only if for every $g\in G\ast H$ and for each $s\in S_G\cup S_H$ \[gx\upharpoonright \{1,s\}\text{ is an allowed pattern}.\] The set of allowed patterns defined on the subsets $\{1,s\}$, where $s\in S_G$, resp. $s\in S_H$, will be denoted by $P_G$, resp. by $P_H$.
	
	We define $V\subseteq B^G$ to be the subshift of finite type, where for $x\in B^G$ we have $x\in V$ if and only if for every $g\in G$ and $s\in S_G$ \[gx\upharpoonright \{1,s\}\in P_G.\] We define a subshift $W\subseteq B^H$ of finite type analogously, with $S_H$ and $P_H$ instead of $S_G$ and $P_G$. We leave to the reader the straightforward verification that $X'=V\ast W$, and thus $X$ is isomorphic to $V\ast W$.
	
	For the `moreover' part, assume without loss of generality, that $\NN=\NN_X^F$, for some finite $F\subseteq G\ast H$ containing the unit. Then, using Lemma~\ref{lem:SFTfactorofSFT} the map $X_F\to A$ defined by $p\in X_F\to p(1)$ induces an isomorphism $\psi$ between a subshift $Z\subseteq (X_F)^{G\ast H}$ and $X$ such that $\psi[\NN_Z^{\{1\}}]=\NN$. Therefore, without loss of generality, we may assume that $\NN=\NN_X^{\{1\}}$. Now since $\phi$ is induced by a map $B\to A$ (see the remark following the statement of Proposition~\ref{prop:SFTprop}), the conclusion follows from Lemma~\ref{lem:nbhdisomorphism}.
\end{proof}

\begin{definition}
Let $G$ and $H$ be countable groups and let $A,B,C$ be non-trivial finite sets. Let $\phi_0:B\rightarrow A$ and $\psi_0:C\rightarrow A$ be maps and denote by $B\times_{\phi,\psi} C$ the restricted direct product $\{(b,c)\in B\times C\colon \phi_0(b)=\psi_0(c)\}$. Denote also by $p_B$, resp. $p_C$ the projection from $B\times C$ on the first, resp. the second coordinate, and by $P_B$, resp. $P_C$ the corresponding induced maps from $(B\times C)^G$ onto $B^G$, resp. from $(B\times C)^H$ onto $C^H$. Suppose that $X\subseteq B^G$ and $Y\subseteq C^H$ are subshifts. Then the \emph{restricted free product} $X\ast_{\phi,\psi} Y$ of $X$ and $Y$ with respect to the maps $\phi_0$ and $\psi_0$ is a subshift of $(B\times_{\phi,\psi}C)^{G\ast H}$ defined as \[\{x\in (B\times_{\phi,\psi}C)^{G\ast H}\colon \forall g\in G\ast H\;\big(P_B(gx\upharpoonright G)\in X\wedge P_C(gx\upharpoonright H)\in Y\big)\}.\]
\end{definition}

\begin{lemma}\label{lem:isooffreeprodofSFT}
Let $G$ and $H$ be countable groups and let $A,B,C$ be non-trivial finite sets. Let $\phi_0:B\rightarrow A$ and $\psi_0:C\rightarrow A$ be maps which induce maps $\phi:B^G\rightarrow A^G$, resp. $\psi:C^H\rightarrow A^H$. Suppose that $X\subseteq A^G$ and $X'\subseteq B^G$, resp. $Y\subseteq A^H$ and $Y'\subseteq C^H$ are subshifts such that $\phi$, resp. $\psi$ induces an isomorphism between $X'$ and $X$, resp. between $Y'$ and $Y$. Then the map $(\phi_0,\psi_0):B\times_{\phi,\psi}C\rightarrow A$ induces an isomorphism between $X'\ast_{\phi,\psi} Y'$ and $X\ast Y$.
\end{lemma}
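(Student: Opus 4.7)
The plan is to show that the alphabet map $\pi \colon B \times_{\phi,\psi} C \to A$ defined by $(b,c) \mapsto \phi_0(b) = \psi_0(c)$ induces a $G \ast H$-equivariant continuous bijection $\Phi \colon X' \ast_{\phi,\psi} Y' \to X \ast Y$. Since the domain is compact and the codomain is Hausdorff, such a bijection is automatically a homeomorphism, hence an isomorphism of subshifts. Equivariance and continuity of $\Phi$ are immediate from it being induced by an alphabet map, so the real content is the bijectivity of its restriction.

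First, I would check that $\Phi(X' \ast_{\phi,\psi} Y') \subseteq X \ast Y$. For $x \in X' \ast_{\phi,\psi} Y'$ and $g \in G \ast H$, a direct unpacking of the definitions gives $g\Phi(x) \upharpoonright G = \phi(P_B(gx \upharpoonright G))$; since $P_B(gx \upharpoonright G) \in X'$ by definition of the restricted free product and $\phi$ sends $X'$ into $X$, the restriction lies in $X$. The analogous argument with $\psi$ and $H$ in place of $\phi$ and $G$ handles the $H$-restriction, yielding $\Phi(x) \in X \ast Y$. For injectivity, if $\Phi(x) = \Phi(x')$ with $x,x' \in X' \ast_{\phi,\psi} Y'$, the same computation applied for every $g$ gives $\phi(P_B(gx \upharpoonright G)) = \phi(P_B(gx' \upharpoonright G))$, and since $\phi$ is injective on $X'$ this forces $P_B(gx \upharpoonright G) = P_B(gx' \upharpoonright G)$. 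Varying $g$ over $G \ast H$ and evaluating at $1_G$ yields $P_B(x(k)) = P_B(x'(k))$ at every $k \in G \ast H$; symmetrically $P_C(x(k)) = P_C(x'(k))$, and since elements of $B \times_{\phi,\psi} C$ are determined by their two coordinate projections, $x = x'$.

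The main step is surjectivity, where the isomorphism hypotheses are genuinely used. Given $y \in X \ast Y$, for each $k \in G \ast H$ one has $(k^{-1}y) \upharpoonright G \in X$ and $(k^{-1}y) \upharpoonright H \in Y$, so I can define
\[
b^{(k)} := \phi^{-1}\big((k^{-1}y) \upharpoonright G\big) \in X', \qquad c^{(k)} := \psi^{-1}\big((k^{-1}y) \upharpoonright H\big) \in Y',
\]
using that $\phi \colon X' \to X$ and $\psi \colon Y' \to Y$ are bijections. I then set $x(k) := (b^{(k)}(1_G), c^{(k)}(1_H))$; both coordinates map to $y(k)$ under the respective alphabet maps $\phi_0$ and $\psi_0$, so $x(k) \in B \times_{\phi,\psi} C$, and $\Phi(x) = y$ by construction. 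The only nontrivial remaining verification is that $x \in X' \ast_{\phi,\psi} Y'$, i.e. $P_B(gx \upharpoonright G) = \phi^{-1}((gy) \upharpoonright G)$ for every $g \in G \ast H$ and the analogous identity for $H$ and $\psi$. This reduces, via the $G$-equivariance of $\phi^{-1}$ on $X$ together with the elementary shift identity $h \cdot ((gy) \upharpoonright G) = ((hg)y) \upharpoonright G$ for $h \in G$ and $g \in G \ast H$, to pointwise unpacking of the definition of $b^{(k)}$.

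I expect the main obstacle to be the bookkeeping around shifts and left cosets in the surjectivity step; however, once the candidate $x$ above is isolated the verification is formal, driven entirely by the $G$- and $H$-equivariance of $\phi^{-1}$ and $\psi^{-1}$ and the fact that they are two-sided inverses of $\phi$ and $\psi$ on $X'$ and $Y'$ respectively.
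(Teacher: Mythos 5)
Your proposal is correct and follows essentially the same route as the paper: show the induced map carries $X'\ast_{\phi,\psi}Y'$ into $X\ast Y$, then verify injectivity from the injectivity of $\phi$ and $\psi$ on the coordinate projections, with surjectivity as the main step where a preimage of $y$ is assembled coordinatewise from $\phi^{-1}\big((k^{-1}y)\upharpoonright G\big)$ and $\psi^{-1}\big((k^{-1}y)\upharpoonright H\big)$. The only cosmetic difference is that the paper expresses $\phi^{-1}$ and $\psi^{-1}$ via Curtis--Hedlund--Lyndon local rules $f_X$, $f_Y$ when defining this preimage, whereas you evaluate the global inverses at the identity and justify membership in $X'\ast_{\phi,\psi}Y'$ by the shift identity $h\cdot\big((gy)\upharpoonright G\big)=\big((hg)y\big)\upharpoonright G$; these are equivalent.
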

\begin{proof}
Fix the notation as in the statement. The canonical map $(\phi_0,\psi_0):B\times_{\phi,\psi}C\rightarrow A$ induces a map $\eta:(B\times_{\phi,\psi}C)^{G\ast H}\rightarrow A^{G\ast H}$. We need to show that $\eta[X'\ast_{\phi,\psi} Y']=X\ast Y$ and $\eta\upharpoonright X'\ast_{\phi,\psi} Y'$ is one-to-one.

First we check that $\eta[X'\ast_{\phi,\psi} Y']\subseteq X\ast Y$. Pick $x\in X'\ast_{\phi,\psi} Y'$ and let us show that $\eta(x)\in X\ast Y$. We need to verify that for every $g\in G\ast H$ we have $\eta(gx)\upharpoonright G\in X$ and $\eta(gx)\upharpoonright H\in Y$. Fix $g\in G\ast H$. By definition, we have $P_B(gx\upharpoonright G)\in X'$ and $P_C(gx\upharpoonright H)\in Y'$. Notice however that we have \[\eta(gx)\upharpoonright G=\phi\big(P_B(gx\upharpoonright G)\big)\in\phi[X']=X.\] The case of $\eta(gx)\upharpoonright H$ is analogous.

Next we show injectivity and surjectivity of the map $\eta:X'\ast_{\phi,\psi} Y'\rightarrow X\ast Y$. Let us start with the injectivity. Pick $x\neq y\in X'\ast_{\phi,\psi} Y'$. Without loss of generality we can assume that $x(1)\neq y(1)$. It follows that either $P_B(x\upharpoonright G)\neq P_B(y\upharpoonright G)$ or $P_C(x\upharpoonright H)\neq P_C(x\upharpoonright H)$. Assume the former, the latter is treated analogously. Then \[\eta(x)\upharpoonright G=\phi\big(P_B(x\upharpoonright G)\big)\neq \phi\big( P_B(y\upharpoonright G)\big)=\eta(y)\upharpoonright G,\] since $\phi$ is injective. However, the inequality $\eta(x)\upharpoonright G\neq\eta(y)\upharpoonright G$ implies the inequality $\eta(x)
\neq \eta(y)$.

Finally, we show the surjectivity. First we notice that since $\phi$ and $\psi$ are isomorphisms, the maps $\phi^{-1}:X\rightarrow X'$, resp. $\psi^{-1}: Y\rightarrow Y'$ are also isomorphisms induced, by the Curtis-Hedlund-Lyndon theorem, by some maps $f_X: X_F\rightarrow B$, resp. $f_Y: Y_E\rightarrow C$, where $F\subseteq G$ and $E\subseteq H$ are finite sets. Let $y\in X\ast Y$. Let us define $x\in X'\ast_{\phi,\psi} Y'$ as follows. For $g\in G\ast H$ we set $x(g):=(b,c)$, where \[b:=f_X(gy\upharpoonright F),\; c:=f_Y(gy\upharpoonright E).\] Since $\phi_0\big(f_X(gy\upharpoonright F)\big)=y(g)=\psi_0\big(f_Y(gy\upharpoonright E)\big)$, we have that $(b,c)\in B\times_{\phi,\psi} C$. In order to check that $x\in X'\ast_{\phi,\psi} Y'$ we need to verify that for every $g\in G\ast H$ we have $P_B(gx\upharpoonright G)\in X'$ and $P_C(gx\upharpoonright H)\in Y'$. We verify the former, the latter is analogous, and without loss of generality, we assume that $g=1$. However, then \[P_B(x\upharpoonright G)=\phi^{-1}(y\upharpoonright G)\in \phi^{-1}[X]=X'.\]
It remains to check that $\eta(x)=y$, which is straightforward and left to the reader.
\end{proof}

\begin{proposition}\label{prop:freeprodsautomata}
	Let $G$ and $H$ be countable groups of the form allowed in Definition~\ref{def:automaton}. Suppose that for any subshift of finite type $X$, resp. $Y$, over $G$, resp. over $H$, and every open neighborhood $\NN_X$ of $X$, resp. $\NN_Y$ of $Y$, there is a subshift $X'\in\NN_X$, resp. $Y'\in\NN_Y$ isomorphic via a basic isomorphism to a subshift produced by a coloring automaton. Then the same holds true for every open neighborhood of any subshift of finite type over $G\ast H$.
\end{proposition}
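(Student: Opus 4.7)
The plan is to reduce to a free-product form via Lemma~\ref{lem:freeprodofshifts}, apply the hypothesis in each factor, and then glue the two resulting coloring automata using a restricted-product construction on their alphabets. The key point is that on every $G$- or $H$-coset of the Cayley graph of $G\ast H$ the corresponding coordinate of any configuration in the combined automaton subshift obeys the local rule of $\Omega_V$ or $\Omega_W$, so bijectivity of the factor basic isomorphisms $\phi_V,\phi_W$ lifts to bijectivity of the combined map.

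I first use Lemma~\ref{lem:freeprodofshifts} to find SFTs $V\subseteq B^G$ and $W\subseteq B^H$ over a common alphabet $B$, together with a basic isomorphism from $V\ast W$ onto a subshift in $\NN_Z$ that induces an isomorphism-preserving bijection between $\NN_{V\ast W}^{\{1\}}$ and an open subset of $\NN_Z$. Thus it suffices to find some $Y\in\NN_{V\ast W}^{\{1\}}$ basic-isomorphic to an automaton subshift over $G\ast H$. Restricting to the alphabet $B^\cap:=V_{\{1\}}\cap W_{\{1\}}$---and using that $V\ast W\subseteq(B^\cap)^{G\ast H}$ while $\NN_{V\ast W}^{\{1\}}$ consists exactly of subshifts whose $\{1\}$-letters are $B^\cap$---I may further assume $V_{\{1\}}=W_{\{1\}}=B$.

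I then apply the hypothesis separately to $V$ and $W$ with the neighborhoods $\NN_V^{\{1\}}$ and $\NN_W^{\{1\}}$, obtaining $V'\in\NN_V^{\{1\}}$ and $W'\in\NN_W^{\{1\}}$ together with basic isomorphisms $\phi_V:\tilde V\to V'$ and $\phi_W:\tilde W\to W'$ induced by alphabet maps $\phi_{V,0}:B_V\to B$ and $\phi_{W,0}:B_W\to B$, where $\tilde V=X_{\Omega_V}$ and $\tilde W=X_{\Omega_W}$ are produced by coloring automata $\Omega_V,\Omega_W$. Discarding letters of $B_V$ that never appear in any configuration of $\tilde V$ (and similarly for $B_W$), I may assume $\phi_{V,0}$ and $\phi_{W,0}$ are surjective onto $B$. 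Since $G\ast H$ is itself of the required free-product form, I now construct a coloring automaton $\Omega$ over $G\ast H$ with alphabet $A:=B_V\times_{\phi_{V,0},\phi_{W,0}}B_W$, choose maps $\tau:B_V\to B_W$ with $\phi_{W,0}\circ\tau=\phi_{V,0}$ and $\kappa:B_W\to B_V$ with $\phi_{V,0}\circ\kappa=\phi_{W,0}$, and set
\[
\Omega(s,(b,c)):=\begin{cases}(\Omega_V(s,b),\,\tau(\Omega_V(s,b))),&s\in S_G,\\(\kappa(\Omega_W(s,c)),\,\Omega_W(s,c)),&s\in S_H.\end{cases}
\]
The compatibility conditions on $\tau$ and $\kappa$ guarantee that $\Omega$ takes values in $A$.

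Let $X_\Omega$ denote the produced subshift and $\phi_0:A\to B$ be the common-value map $(b,c)\mapsto\phi_{V,0}(b)=\phi_{W,0}(c)$; set $Z':=\phi_0[X_\Omega]$. Since $\Omega$ can be started from any element of $A$, $(X_\Omega)_{\{1\}}=A$, so $Z'_{\{1\}}=\phi_0(A)=B=(V\ast W)_{\{1\}}$, giving $Z'\in\NN_{V\ast W}^{\{1\}}$. For any $x\in X_\Omega$ and any $G$-coset $C$ of $G\ast H$, the rule of $\Omega$ along $S_G$-generators forces the $B_V$-component of $x\upharpoonright C$ to obey the local rule of $\Omega_V$, so it lies in $\tilde V$; symmetrically for $H$-cosets. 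If $\phi_0\circ x=\phi_0\circ x'$, then on each $G$-coset the two $\tilde V$-configurations have equal $\phi_V$-images, and bijectivity of $\phi_V:\tilde V\to V'$ forces them to coincide; the same holds on each $H$-coset, yielding $x=x'$. Hence $\phi_0:X_\Omega\to Z'$ is a basic isomorphism, and pulling $Z'$ back through the isomorphism of Lemma~\ref{lem:freeprodofshifts} produces the required subshift in $\NN_Z$. The main technical obstacle is the well-definedness of $\Omega$ on $A$: without the reduction to $V_{\{1\}}=W_{\{1\}}=B$, iterates of $\Omega_V$ or $\Omega_W$ could leave the subalphabet on which $\tau,\kappa$ are defined, so that reduction is what makes the glued automaton a genuine map $A\to A$.
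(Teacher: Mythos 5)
Your proof is correct and follows essentially the same route as the paper: reduce to a free product $V\ast W$ over a common alphabet via Lemma~\ref{lem:freeprodofshifts}, apply the hypothesis in each free factor, and glue the two resulting coloring automata on the restricted product alphabet $B_V\times_{\phi_{V,0},\phi_{W,0}}B_W$. The only differences are cosmetic: you fix explicit sections $\tau,\kappa$ where the paper allows an arbitrary compatible second coordinate, and your coset-by-coset injectivity argument for $\phi_0$ is precisely the content of the paper's Lemma~\ref{lem:isooffreeprodofSFT}, which the paper invokes instead of reproving inline.
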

\begin{proof}
Fix $G$, $H$, and let $V$ be a subshift of finite type over $G\ast H$ and $\NN$ be an open neighborhood of $V$. By Lemma~\ref{lem:freeprodofshifts}, there exist a finite non-trivial set $A$, subshifts of finite type $X\subseteq A^G$ and $Y\subseteq A^H$, and a basic isomorphism $\phi:X\ast Y\to V$ that induces a bijection between an open neighborhood $\NN_{X\ast Y}^{\{1\}}$ and open neighborhood $\NN'\subseteq \NN$ of $V$. Consequently, without loss of generality, we may assume that $V=X\ast Y$ and $\NN=\NN_{X\ast Y}^{\{1\}}$, i.e. for $Z\subseteq A^{G\ast H}$ we have $Z\in\NN$ if and only if $Z$ is \emph{fully colored}, i.e. for every $a\in A$ there is $z\in Z$ with $z(1)=a$. By the assumption there are $X'\subseteq X$ and $Y'\subseteq Y$ that are fully colored, i.e. for every $a\in A$ there are $x\in X'$, resp. $y\in Y'$ satisfying $x(1)=a=y(1)$, and moreover $X'$ and $Y'$ are isomorphic to subshifts $X''\subseteq B^G$, resp. $Y''\subseteq C^H$, for some non-trivial finite sets $B$ and $C$ that are produced by some coloring automata $\Omega_X$ and $\Omega_Y$ defined on finite symmetric generating sets $S_X\subseteq G$ and $S_Y\subseteq H$. Moreover, the isomorphisms $\phi:X''\rightarrow X'$, resp. $\psi:Y''\rightarrow Y'$ are basic, i,e. induced by finite (surjective) maps $\phi_0:B\rightarrow A$, resp. $\psi_0: C\rightarrow A$.

We define a new coloring automaton on $G\ast H$ with the set of colors $B\times_{\phi,\psi} C$ and with respect to the finite symmetric generating set $S:=S_X\cup S_Y$. For $(b,c)\in B\times_{\phi,\psi} C$ and $s\in S$ we set \[\Omega(s,(b,c)):=\begin{cases}
	(\Omega_X(s,b),c') & \text{if }s\in S_X\text{ so that }(\Omega_X(s,b),c')\in B\times_{\phi,\psi} C;\\
	(b',\Omega_Y(s,c)) & \text{if }s\in S_Y\text{ so that }	(b',\Omega_Y(s,c))\in B\times_{\phi,\psi} C.
\end{cases}\] In other words, if $s$ belongs to, say, $S_X$, $\Omega(s,(b,c))$ is $(b',c')$, where $b'$ is produced by the automaton $\Omega_X$ and $c'$ is arbitrary so that $(b',c')\in B\times_{\phi,\psi} C$.

Denote by $Z\subseteq (B\times_{\phi,\psi} C)^{G\ast H}$ the subshift produced by the coloring automaton $\Omega$. Clearly we have $Z\subseteq X''\ast_{\phi,\psi} Y''$. By Lemma~\ref{lem:isooffreeprodofSFT}, we have an isomorphism $\eta: X''\ast_{\phi,\psi} Y''\rightarrow X'\ast Y'$. Therefore $Z':=\eta[Z]\subseteq X'\ast Y'\subseteq X\ast Y$. Since $Z$ is fully colored, i.e. for every $(b,c)\in B\times_{\phi,\psi} C$ there is $z\in Z$ such that $z(1)=(b,c)$, also $Z'$ is fully colored. Therefore $Z\in\NN$.
\end{proof}

\begin{corollary}\label{cor:stabilityunderfreeprod-automata}
	Let $G$ and $H$ be countable groups such that for each $F\in\{G,H\}$ and every non-trivial finite set $A$ the set of those subshifts that are isomorphic to subshifts produced by coloring automata is dense in $\SH_F(A)$. Then the same is true for $F=G\ast H$ and any non-trivial finite set $A$.
	
	In particular, the set of strongly projectively isolated subshifts is dense in $\SH_F(A)$, for every non-trivial finite set $A$, and $F$ has the strong topological Rokhlin property.
\end{corollary}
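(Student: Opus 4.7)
The proof will proceed in three stages, effectively assembling pieces already set up. First, I would upgrade the conclusion of Proposition~\ref{prop:freeprodsautomata} from a statement about neighborhoods of SFTs to a density statement in all of $\SH_{G\ast H}(A)$. Since any open subset $U\subseteq\SH_{G\ast H}(A)$ contains, by Lemma~\ref{lem:SFTnbhrds}, an SFT $X$ together with a neighborhood $\NN_X\subseteq U$ consisting of subshifts of $X$, it suffices to show that $\NN_X$ contains a subshift isomorphic to one produced by a coloring automaton. For this I want to feed Proposition~\ref{prop:freeprodsautomata} the hypothesis it needs. The hypothesis of the corollary gives density in $\SH_G(A)$ and $\SH_H(A)$, which immediately yields the non-basic version of the proposition's hypothesis; a standard higher-block recoding (in the spirit of Lemma~\ref{lem:standardformoffactormaps} and the Curtis-Hedlund-Lyndon theorem) then allows one to promote any such isomorphism to a basic one, after replacing the ambient subshift by an isomorphic copy in a larger alphabet. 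Once this is done, Proposition~\ref{prop:freeprodsautomata} applies and furnishes the required element of $\NN_X$.

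Second, for the "in particular" clause, I would invoke Theorem~\ref{thm:automataproduceprojisolatedshfts}: every subshift $X_\Omega$ produced by a coloring automaton is strongly projectively isolated, being the image of the isolated subshift $\tilde{X}_\Omega$ under the basic factor map $P$. This property passes to isomorphic copies: if $Z\cong X_\Omega$ via some $\eta$, then $\eta\circ P\colon\tilde{X}_\Omega\to Z$ realizes $Z$ as a factor of an isolated subshift. Therefore the dense family constructed in the first stage consists of strongly projectively isolated subshifts, and in particular of projectively isolated ones.

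Finally, since the density of projectively isolated subshifts in $\SH_{G\ast H}(n)$ holds for every $n\geq 2$, Theorem~\ref{thm:mainRokhlin} immediately yields the strong topological Rokhlin property for $G\ast H$.

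The only step that is not essentially bookkeeping is the recoding needed to match the "basic isomorphism" hypothesis of Proposition~\ref{prop:freeprodsautomata} with the weaker "isomorphism" hypothesis of the corollary; once this is handled, everything else is direct composition of results already proved. I expect this step to be the sole mild obstacle, and it is handled by the same higher-block mechanism used throughout Section~\ref{sect:freeproducts}.
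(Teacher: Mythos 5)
Your proposal is correct and follows essentially the same route as the paper: reduce via Lemma~\ref{lem:SFTnbhrds} to a neighborhood of a subshift of finite type, apply Proposition~\ref{prop:freeprodsautomata}, and deduce the ``in particular'' clause from Theorems~\ref{thm:automataproduceprojisolatedshfts} and~\ref{thm:mainRokhlin}. The one point where you go beyond the paper --- the mismatch between ``isomorphic'' in the corollary's hypothesis and ``isomorphic via a basic isomorphism'' in the proposition's hypothesis --- is a real imprecision that the paper passes over silently; it is harmless in the paper's only application (Theorem~\ref{thm:STRPfreeproducts}, where the constructed isomorphisms are already basic), though if you do want to repair it in general you should check carefully that a higher-block recoding of an automaton-produced subshift is again automaton-produced, which is not immediate from Lemma~\ref{lem:standardformoffactormaps} alone.
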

\begin{proof}
Fix the groups $G$ and $H$ and a non-trivial finite set $A$. Let $\NN\subseteq \SH_{G\ast H}(A)$ be an open neighborhood. By Lemma~\ref{lem:SFTnbhrds}, we may suppose that $\NN$ is an open neighborhood of some subshift of finite type $X$. Then we apply Proposition~\ref{prop:freeprodsautomata}. The `in particular' part follows then from Theorems~\ref{thm:automataproduceprojisolatedshfts} and~\ref{thm:mainRokhlin}.
\end{proof}

We have already remarked that in \cite[Theorem 3.6]{PavSchmie} they prove that isolated subshifts are dense in $\SH_\Int(n)$, for $n\geq 2$. In fact, they prove that subshifts that are isomorphic to subshifts of finite type given by Rauzy graphs with no middle cycles property are isolated and dense. Here a Rauzy graph $(V,E)$ (in our previous notation from Definition~\ref{def:Rauzy}, an $\{1\}$-Rauzy graph) has \emph{no middle cycles property} if it has no cycle that has both incoming and outgoing edge from outside of the cycle - we refer to \cite[Definition 3.1 and Lemma 3.3]{PavSchmie} for more details.
\begin{theorem}\label{thm:STRPfreeproducts}
	Let $(G_i)_{i\leq n}$ be a finite sequence of groups where each of them is either finite or cyclic. Then $\bigstar_{i\leq n} G_i$ has the strong topological Rokhlin property.
	
	In fact, the set of strongly projectively isolated subshifts is dense in the spaces of subshifts over $\bigstar_{i\leq n} G_i$.
\end{theorem}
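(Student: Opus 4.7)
The strategy is to invoke Corollary~\ref{cor:stabilityunderfreeprod-automata} inductively on $n$: granted that each factor $G_i$ satisfies the hypothesis that subshifts isomorphic via a basic map to subshifts produced by coloring automata are dense in $\SH_{G_i}(A)$ for every non-trivial finite $A$, the corollary propagates this property to $\bigstar_{i\leq n} G_i$ and, combined with Theorem~\ref{thm:automataproduceprojisolatedshfts}, yields density of strongly projectively isolated subshifts. The STRP conclusion then follows from Theorem~\ref{thm:mainRokhlin}. Thus the work reduces to verifying the automaton-density hypothesis separately for a single factor that is either finite or isomorphic to $\Int$.

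The finite case is the easy one. If $G_i$ is finite, the space $\SH_{G_i}(A)$ is finite and every subshift is already isolated, so density is vacuous; only the ``isomorphic to an automaton-produced subshift'' clause needs work. Given $X\subseteq A^{G_i}$, I would enlarge the alphabet by setting $B:=X$, treating each element of $X$ as one color, and define $\Omega:(G_i\setminus\{1_{G_i}\})\times B\to B$ by $\Omega(s,x):=s^{-1}\cdot x$, using the shift action on $X$. Starting the automaton at $1_{G_i}$ with color $x$ produces the configuration $h\mapsto h^{-1}\cdot x$, which under the basic projection $B\to A$, $y\mapsto y(1_{G_i})$, collapses onto $x\in X$. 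A direct unwinding shows that this basic map is a $G_i$-equivariant isomorphism from the automaton-produced subshift onto $X$, and consistency of the automaton (independence of starting vertex) follows from shift-invariance of $X$.

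For the cyclic infinite case $G_i=\Int$, I would appeal to the Pavlov--Schmieding density theorem \cite[Theorem 3.6]{PavSchmie}, which states that the SFTs associated to Rauzy graphs with the no-middle-cycle property are dense in $\SH_\Int(n)$ for each $n\geq 2$. The remaining task is to show that each such SFT $X$ is isomorphic, via a basic map, to a subshift of the form $X_\Omega$. The no-middle-cycle condition forces every vertex of the Rauzy graph to be either on a terminal cycle or in a finite acyclic tree attached to such a cycle, so forward and backward paths are deterministic after a uniformly bounded transient. A higher-block alphabet enlargement encoding a sufficiently long window of recent transitions can then replace the Rauzy graph by an isomorphic one in which every vertex has a unique outgoing edge in each direction, making it a functional graph in both $+1$ and $-1$; this pair of functions is exactly a coloring automaton $\Omega:\{-1,+1\}\times B\to B$ in the sense of Definition~\ref{def:automaton}. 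The projection $B\to A$ undoing the higher-block encoding is the required basic isomorphism.

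The main obstacle is the $\Int$ case: correctly translating Pavlov--Schmieding's no-middle-cycle SFTs, via a basic alphabet-level map, into subshifts coming from functional Rauzy graphs, since one must simultaneously keep enough history to force forward and backward determinism while still using a single finite alphabet and a map that factors through individual letters. The finite case and the free-product induction, by contrast, are direct unwindings of the definitions and of Corollary~\ref{cor:stabilityunderfreeprod-automata}.
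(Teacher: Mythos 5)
Your overall architecture --- reduce via Corollary~\ref{cor:stabilityunderfreeprod-automata} to the single-factor cases and handle a finite factor by taking $B:=X_G$ with $\Omega(s,x):=s^{-1}x$ and the basic projection $p\mapsto p(1_G)$ --- is exactly the paper's proof, as is the appeal to the Pavlov--Schmieding density of NMC subshifts for the infinite cyclic factor. The gap is in your treatment of the $\Int$ case. A Rauzy graph in which \emph{every} vertex has a unique outgoing and a unique incoming edge is a disjoint union of cycles, so the associated SFT is a finite union of periodic orbits; no higher-block recoding (a conjugacy) can turn an NMC SFT containing a non-periodic point into such an object. And NMC SFTs do contain non-periodic points: the condition only forbids a cycle from having \emph{both} an incoming and an outgoing external edge, so a ``source'' cycle may have an outgoing external edge, at which the forward path branches on \emph{every} visit --- there is no ``uniformly bounded transient'' after which paths become deterministic. (The paper's own example, the SFT on $\{-1,0,1\}$ with allowed $2$-blocks $-1-1,\,-11,\,10,\,00$, contains the non-periodic point $\ldots-1\,-1\,1\,0\,0\ldots$; the word $-1\,1\,0$ appears in no periodic point, so even if you produced a bi-functional graph its subshift could not lie in the neighborhood $\NN_Y^F$ for $F=\{0,1,2\}$.)

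What actually makes the $\Int$ case work is that the subshift $X_\Omega$ produced by a coloring automaton is the \emph{closure} of the set of configurations the deterministic rule outputs, and this closure is in general not the SFT of a functional graph (indeed $X_\Omega$ is typically only sofic). The paper's construction keeps the nondeterminism of the graph and resolves it by a fixed choice: after arranging (by enlarging the block length) that no vertex has both several incoming and several outgoing edges, it sets $\Omega(1,v)$ to be the unique successor of $v$ along a simple cycle when $v$ lies on one, and an arbitrary successor otherwise, symmetrically for $\Omega(-1,v)$. Each individual output of this automaton is eventually periodic in both directions, but the non-periodic transition points of $Y$ are recovered as limits of shifted outputs, so the closure is (isomorphic via the basic map $p\mapsto p(0)$ to) the required subshift in $\NN$. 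Your proposal conflates ``subshift produced by a coloring automaton'' with ``SFT of a bi-functional Rauzy graph,'' and the latter class is too small for the density statement you need.
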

\begin{proof}
In order to apply Corollary~\ref{cor:stabilityunderfreeprod-automata} it suffices to show that for any group $G$ that is either finite or infinite cyclic and every non-trivial finite set $A$ the set of those subshifts produced by coloring automata is dense in $\SH_G(A)$.\medskip

\noindent{\bf Case 1.} $G$ is finite. Fix a non-trivial finite set $A$ and a subshift $X\subseteq A^G$. Set $B:=X_G$ and define a subshift $Y\subseteq B^G$ isomorphic to $X$ via a map $\phi:Y\rightarrow X$ which induced by the map $\phi_0:B=X_G\rightarrow A$ defined by $p\in X_G\to p(1_G)$. Now we define an automaton $\Omega$ with respect to $S=G\setminus\{1_G\}$ and $B$ as the set of colors as follows. For $b\in B=X_G$ and $g\in G\setminus\{1_G\}$ we set \[\Omega(g,b):=g^{-1}b.\]
It is straighforward that the subshift produced by $\Omega$ is equal to $Y$.\medskip

\noindent{\bf Case 2.} $G$ is infinite cyclic.

Since $G$ is isomorphic to $\Int$, we shall use the additive notation for group operations. We fix a non-trivial finite set $A$ and a subshift of finite type $X\subseteq A^\Int$ and some nighborhood $\NN$ od $X$. By \cite[Theorem 3.6]{PavSchmie}, $\NN$ contains an NMC (no middle cycle) subshift $Y$, where $Y$ has NMC if there exists $n\in\Nat$ such that the Rauzy graph on $Y_{[0,n]}$ (i.e. the graph where $Y_{[0,n]}$ is the set of vertices and the set of edges is defined in the obvious way as in Proposition~\ref{prop:SFTprop}) has no middle cycles property. By increasing $n$ we may moreover assume that the graph, which we shall denote $(V,E)$, has no vertex that has both more than one incoming edges and more than one outgoing edges. We let $P:V\rightarrow\{0,1\}$ to be the characteristic function of the set of those vertices of $V$ that belong to a simple cycle.

We shall now define the automaton as follows, with respect to $S=\{-1,1\}$ and $V$ as the set of colors. Pick $v\in V$. We define \[\Omega(1,v):=\begin{cases}
	w & \text{if }P(v)=0\text{ and }w\text{ is arbitrary such that }(v,w)\in E;\\
	w & \text{if }P(v)=1,\; P(w)=1\text{ and }(v,w)\in E.
\end{cases}\]
In words, we set $\Omega(1,v)$ to be the unique successor of $v$ on a simple cycle provided that $v$ lies on a simple cycle, otherwise we set $\Omega(1,v)$ to be an arbitrary successor ov $v$ in the graph. We define $\Omega(-1,v)$ symmetrically.

It is clear that the subshift generated by $\Omega$ is a subshift $Y'$ of $Y_{[0,n]}^\Int$ isomorphic to $Y$ via a map $\phi:Y'\rightarrow Y$ induced by $\phi_0:Y_{[0,n]}\to A$ defined by  $p\in Y_{[0,n]}\to p(0)$.
\end{proof}

\section{Groups without the strong topological Rokhlin property}\label{sect:noSTRP}
In this section we initiate the production of negative examples, i.e. we find many new examples of groups without the strong topological Rokhlin property. We prove here Theorem~\ref{thm:intro3}. We start with one more characterization of the strong topological Rokhlin property that is useful especially for proving the negative results.

When $X$ is a subshift and $\AAA$ is a collection of closed subsets of $X$, not necessarily a partition, we say that $X$ is \emph{$\AAA$-minimal} if there is no non-empty proper subshift $Y\subseteq X$ that intersects every element from $\AAA$.

\begin{theorem}\label{thm:2ndcharacterizationofSTRP}
A countable group $G$ has the strong topological Rokhlin property if and only if for every non-trivial finite set $A$, any sofic subshift $X\subseteq A^G$, which is a factor of a subshift of finite type $Z\subseteq B^G$, for some non-trivial finite $B$, via a factor map $\phi: Z\rightarrow X$, and any open neighborhood $\NN$ of $X$ in $\SH_G(A)$, there are 
\begin{itemize}
	\item a sofic subshift $Y\subseteq Z$ satisfying $\phi[Y]\in\NN$,
	\item a subshift of finite type $V\subseteq C^G$, for some non-trivial finite set $C$, factoring on $Y$ via some $\psi: V\rightarrow Y$,
	\item and a clopen partition $\PP$ of $V$  such that $\phi[Y]$ is $\AAA$-minimal, where $\AAA=\phi\circ\psi(\PP)$.
\end{itemize}
  
\end{theorem}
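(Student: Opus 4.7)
The plan is to prove both directions by reducing to Theorem~\ref{thm:mainRokhlin}, with the central observation being the following equivalence: given an SFT $V$ with factor map $\psi\colon V\to Y$ and a finite window $F\subseteq G$ large enough that every element of $\NN_V^F$ is contained in $V$, the subshift $Y$ is projectively isolated with $V,\psi,F$ as witnessing data if and only if $Y$ is $\psi[\PP^F]$-minimal, where $\PP^F=\{C_p(V)\colon p\in V_F\}$. Indeed, a proper subshift $Y''\subsetneq Y$ meeting every $\psi[C_p(V)]$ lifts to a proper subshift $W=\psi^{-1}[Y'']\subsetneq V$ with $W_F=V_F$, and conversely any $W\in\NN_V^F$, $W\subseteq V$, with $\psi[W]\subsetneq Y$ projects to such a $Y''$.

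For $(\Rightarrow)$, assume the STRP, so by Theorem~\ref{thm:mainRokhlin} projectively isolated subshifts are dense in $\SH_G(B)$. Fix data $X\subseteq A^G$, $\phi\colon Z\to X$, $\NN$ as in the statement. By the Curtis--Hedlund--Lyndon theorem $\phi$ is given by a local rule with some finite window $F_0$, and combining this with Lemma~\ref{lem:SFTnbhrds} I fix an open neighborhood $\NN'=\NN_Z^{F_1}$ of $Z$ in $\SH_G(B)$, with $F_1$ large enough that every $W\in\NN'$ is contained in $Z$, $\phi$ is defined on $W$, and $\phi[W]\in\NN$. Pick a projectively isolated $Y\in\NN'$ and let $V\subseteq C^G$, $\psi\colon V\to Y$, and $F''\subseteq G$ witness this (enlarging $F''$ if needed so that every element of $\NN_V^{F''}$ is contained in $V$). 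Setting $\PP:=\{C_q(V)\colon q\in V_{F''}\}$ and $\AAA:=\phi\circ\psi[\PP]$, it remains to verify that $\phi[Y]$ is $\AAA$-minimal.

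Suppose for contradiction that $X''\subsetneq\phi[Y]$ is a proper subshift meeting every element of $\AAA$. Let $W:=(\phi\circ\psi)^{-1}[X'']\subseteq V$. Since $X''$ meets every $\phi\circ\psi[C_q(V)]$, every $C_q(V)$ meets $W$, so $W_{F''}=V_{F''}$, i.e.~$W\in\NN_V^{F''}$. Also $(\phi\circ\psi)[W]=X''\subsetneq\phi[Y]=(\phi\circ\psi)[V]$, hence $W\subsetneq V$ is proper. But projective isolatedness of $Y$ forces $\psi[W]=Y$, so $(\phi\circ\psi)[W]=\phi[Y]$, contradicting $X''\subsetneq\phi[Y]$.

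For $(\Leftarrow)$, it suffices to deduce density of projectively isolated subshifts in $\SH_G(A)$ for every non-trivial finite $A$ and invoke Theorem~\ref{thm:mainRokhlin}. Given a non-empty open $\NN\subseteq\SH_G(A)$, Lemma~\ref{lem:SFTnbhrds} supplies an SFT $X\in\NN$, and I apply the hypothesis to $X$, $Z:=X$, $\phi:=\mathrm{id}$, and (possibly shrunk) $\NN$ to obtain a sofic $Y\in\NN$, an SFT $V$ with factor $\psi\colon V\to Y$, and a clopen partition $\PP$ of $V$ for which $Y$ is $\psi[\PP]$-minimal. Any clopen partition of $V$ is refined by some pattern partition $\PP^{F'}$ for $F'$ large enough (each $P\in\PP$ is clopen, hence determined by a finite window, and one takes the union of these windows); since each $\psi[P]$ with $P\in\PP$ is a union of sets $\psi[C_p(V)]$ with $p\in V_{F'}$, a proper subshift of $Y$ meeting every element of $\psi[\PP^{F'}]$ also meets every element of $\psi[\PP]$, so $\psi[\PP]$-minimality upgrades to $\psi[\PP^{F'}]$-minimality, which by the opening observation makes $Y\in\NN$ projectively isolated via $V,\psi,F'$. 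The main obstacle in the whole argument is the careful window bookkeeping in $(\Rightarrow)$ so that $\phi$ remains defined on the approximating subshifts and pushes them inside $\NN$; with that in place, both implications are formal consequences of the partition-minimality reformulation of projective isolatedness and a single application of Theorem~\ref{thm:mainRokhlin}.
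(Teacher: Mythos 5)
Your proof is correct and follows essentially the same route as the paper's: both directions rest on the same correspondence between $\AAA$-minimality for the image of a pattern partition of the SFT witness and projective isolatedness, combined with Theorem~\ref{thm:mainRokhlin} (the paper merely phrases the forward direction contrapositively via Proposition~\ref{prop:noRokhlin}, with the identical lifting of a proper $\AAA$-meeting subshift to a proper element of $\NN_V^{F''}$). The one step you elide is that the witnessing subshift $V$ in the forward direction must first be upgraded to a subshift of finite type --- both your parenthetical enlargement of $F''$ so that $\NN_V^{F''}$ consists of subshifts of $V$ and the statement of the theorem itself require this --- which the paper also dispatches in one line via Lemma~\ref{lem:SFTnbhrds}.
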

\begin{proof}
Fix a countable group $G$.\medskip

Suppose first that there are a non-trivial finite set $A$, a sofic subshift $X\subseteq A^G$, which is a factor of a subshift of finite type $Z\subseteq B^G$, for some non-trivial finite $B$, via a factor map $\phi: Z\rightarrow X$, and an open neighborhood $\NN$ of $X$ in $\SH_G(A)$ such that no sofic subshift $Y\subseteq Z$ satisfying $\phi[Y]\in\NN$ is $\AAA$-minimal for any $\AAA:=\phi\circ\psi[\PP]$, where $V\subseteq C^G$, for some non-trivial finite set $C$, factors onto $Y$ via $\psi:V\rightarrow Y$ and $\PP$ is a clopen partition of $V$.

The open neighborhood of $X$ is without loss of generality of the form $\NN_X^F$ for some finite set $F\subseteq G$. Each pattern $p\in X_F$ determines a basic clopen set $C_p\subseteq X$ so that $\RR:=\{C_p\colon p\in X_F\}$ is a clopen partition of $X$. The preimage of $\RR$ via $\phi$ is a clopen partition $\PP'$ of $Z$. We may refine $\PP'$ to a clopen partition $\PP''$ that consists of basic clopen sets $C_p(Z)$ indexed by patterns $p\in Z_E$, where $E\subseteq G$ is a finite set, and so that for every $Z'\in\NN_Z^E$ we have that $Z'\subseteq Z$ - the latter claim by Lemma~\ref{lem:SFTnbhrds}. We now claim that $\NN_Z^E$ contains no projectively isolated subshift. If we show it, applying Proposition~\ref{prop:noRokhlin}, we get that $G$ does not have the strong topological Rokhlin property.

Suppose on the contrary that $Z'\in\NN_Z^E$ is projectively isolated. In particular, it is a sofic subshift of $Z$. Let $V\subseteq C^G$ be a subshift, for some finite non-trivial set $C$, such that there are an open neighborhood $\NN$ of $V$ in $\SH_G(C)$ and a map $\psi$ defined on every subshift from $\NN$ so that for every $V'\in\NN$ we have $\psi[V']=Z'$. We may suppose that $V$ is of finite type and that $\NN=\NN_V^J$, for some finite set $J\subseteq G$, so that for every $V'\in\NN_V^J$ we have $V'\subseteq V$, again by applying Lemma~\ref{lem:SFTnbhrds}. Set \[Y:=\phi\circ\psi[V].\] Since $\psi[V]=Z'\in \NN_Z^E$ we get $\psi[V]\subseteq Z$, and thus $Y=\phi[Z']\subseteq\phi[Z]=X$. Second, since $\psi[V]\in\NN_Z^E$ we get that $\psi[V]$ intersects every clopen subset from $\PP'$, and thus $Y=\phi\circ\psi[V]$ intersects every clopen subset of $\RR$, so $Y\in\NN_X^F$. Set now \[\AAA:=\{\phi\circ\psi\big(C_p(V)\big)\colon p\in V_J\}.\] We have $Y=\bigcup \AAA$ and by the assumption, $Y$ is not $\AAA$-minimal. Therefore there is a proper subshift $Y'\subseteq Y$ that non-trivially intersects every set from $\AAA$. However then \[V':=(\phi\circ\psi)^{-1}(Y')\] is a proper subshift of $V$ that non-trivially intersects every set $C_p(V)$, for $p\in V_J$. In other words, $V'\in\NN_V^J$. But then by the assumption on the projective isolation of $Z'$ we get $\psi[V']=Z'$. Therefore \[Y'=\phi\circ\psi[V']=\phi[Z']=\phi\circ\psi[V]=Y,\] which is a contradiction as the inclusion $Y'\subseteq Y$ is strict.\bigskip

Now we prove the converse. Using Theorem~\ref{thm:mainRokhlin}, it suffices to show that projectively isolated subshifts are dense in $\SH_G(A)$, for any non-trivial finite set $A$. Fix such $A$ and an open set $\NN\subseteq\SH_G(A)$. We may suppose that $\NN$ is of the form $\NN_X^F$, where $X\subseteq A^G$ is a subshift of finite type, $F\subseteq G$ is a finite set, and moreover for every $X'\in\NN_X^F$ we have $X'\subseteq X$. Now we use the assumption. Notice that since $X$ is of finite type, we have $Z=X$ and $\phi=\mathrm{id}$ in the notation of the statement. By the assumption, there is a sofic subshift $Y\in\NN_X^F$ that is $\AAA$-minimal, where $\AAA$ is an image of a clopen partition $\PP$ of some subshift of finite type $V\subseteq C^G$, for some non-trivial finite set $C$, that factors onto $Y$. We claim that $Y$ is projectively isolated. Denote by $\psi$ the factor map from $V$ onto $Y$. We may refine the clopen partition $\PP$ to a clopen partition \[\PP':=\{C_p(V)\colon p\in V_E\},\] where $E\subseteq G$ is a finite set. We can assume that $E$ is big enough so that for every $V'\in\NN_V^E$ we have $V'\subseteq V$. The image of $\PP'$ is a family of closed sets $\AAA'$ that refines $\AAA$, i.e. for every $R'\in\AAA'$ there is $R\in\AAA$ such that $R'\subseteq R$. It is clear that since $Y$ is $\AAA$-minimal it is also $\AAA'$-minimal. In order to show that $Y$ is projectively isolated it is enough to show that for every $V'\in\NN_V^E$ we have $\phi[V']=Y$. Pick $V'\in\NN_V^E$. First of all notice that since $V'\subseteq V$, $\psi$ is indeed defined on $V'$. Second, since $V'\in\NN_V^E$, by definition $V'\cap C_p(V)\neq\emptyset$ for every $p\in V_E$. Thus $\phi[V']\cap\phi[C_p(V)]\neq\emptyset$, for every $p\in V_E$, so $V'$ intersects non-trivially every element of $\AAA'$. However, since $Y$ is $\AAA'$-minimal, it follows that $\phi[V']=Y$.
\end{proof}

\subsection{Finitely generated groups without the STRP}
We start with results that concern finitely generated groups. As we shall see it is much harder to disprove the strong topological Rokhlin property for finitely generated groups. Before stating the main result, we ask the reader to pay attention to the following facts that play an important role in the proof below. If $A$ is some non-trivial finite set, $G$ is a finitely generated recursively presented group, $N$ is its finitely generated normal subgroup, $X\subseteq A^{G/N}$ is a subshift, and $X'\subseteq A^G$ is the corresponding subshift over $G$ where $N$ acts trivially, then 
\begin{itemize}
	\item $X$ is effective if and only if $X'$ is (this follows from Lemma~\ref{lem:effectivesubshiifts});
	\item if $X$ is of finite type, resp. sofic, then so is $X'$ (this will be proved below);
	\item if $X'$ is sofic, then $X$ is effective, but not necessarily sofic (this is the point of the so-called simulation theorems, we refer to \cite{Bar19} for more details).
\end{itemize}
These facts are behind the reason why some of the subshifts in the theorem below are chosen to be effective, resp. sofic, resp. of finite type.
\begin{theorem}\label{thm:disjointunionpropersubshifts}
Let $G$ be a countable group. Suppose that for some non-trivial finite set $A$ there is a sofic subshit $X\subseteq A^G$, being a factor of a subshift of finite type $Z\subseteq B^G$, for some non-trivial finite set $B$, via a map $\phi: Z\rightarrow X$, such that for every effective subshift $Y\subseteq Z$, $\phi[Y]$ is a disjoint union of infinitely many proper subshifts. 

Suppose also that there is a short exact sequence of groups \[1\to N\to H\to G\to 1,\] where $H$ is finitely generated and recursively presented, and $N$ is finitely generated. Then $H$, and in particular also $G$, does not have the strong topological Rokhlin property.
\end{theorem}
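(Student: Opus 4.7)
The plan is to reduce the failure of STRP for $H$ to the hypothesis on $G$ by pulling back subshifts across the quotient map $q : H \twoheadrightarrow G$, and then invoking Theorem~\ref{thm:2ndcharacterizationofSTRP}. For any $G$-subshift $W \subseteq C^G$, let $W^* \subseteq C^H$ denote the pullback along $q$, namely the corresponding $H$-subshift on which $N$ acts trivially. Because $N$ is finitely generated, $(\cdot)^*$ preserves being an SFT --- one enforces $N$-invariance by finitely many additional forbidden patterns coming from generators of $N$ --- and being sofic, and by Lemma~\ref{lem:effectivesubshiifts} it preserves effectiveness. Applying this to $X$, $Z$, and $\phi$ yields $X^*$, $Z^*$, and a factor map $\phi^* : Z^* \to X^*$, with $X^*$ sofic over $H$ and $Z^*$ an SFT.

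Next, assuming for contradiction that $H$ has the STRP, I would apply Theorem~\ref{thm:2ndcharacterizationofSTRP} to the sofic subshift $X^*$ with an arbitrary open neighborhood, obtaining a sofic $Y^* \subseteq Z^*$, an SFT $V$ factoring onto $Y^*$ via some $\psi$, and a clopen partition $\PP$ of $V$ such that $\phi^*[Y^*]$ is $\AAA$-minimal for $\AAA := \phi^* \circ \psi(\PP)$. The key observation is that $Y^* \subseteq Z^*$ inherits the $N$-trivial action, so $Y^* = (\bar{Y})^*$ for a unique subshift $\bar{Y} \subseteq Z$ over $G$; since $Y^*$ is sofic, hence effective, over $H$, Lemma~\ref{lem:effectivesubshiifts} yields that $\bar{Y}$ is effective over $G$. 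The hypothesis then gives a decomposition $\phi[\bar{Y}] = \bigsqcup_{i \in I} W_i$ with $I$ infinite and each $W_i$ a proper subshift, and lifting produces $\phi^*[Y^*] = \bigsqcup_{i \in I} W_i^*$ (disjointness is preserved by $(\cdot)^*$ since the pullback is injective).

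Finally, I would exploit this against the finiteness of $\AAA$: for each $A \in \AAA$ choose some $i(A) \in I$ with $A \cap W_{i(A)}^* \neq \emptyset$, set $F := \{i(A) : A \in \AAA\}$, and pick $i_0 \in I \setminus F$ (possible since $I$ is infinite). Then $\bigcup_{i \in F} W_i^*$ is a subshift of $\phi^*[Y^*]$, properly contained because it is disjoint from the nonempty $W_{i_0}^*$, yet it meets every element of $\AAA$ --- contradicting $\AAA$-minimality. This shows $H$ lacks the STRP; an entirely parallel argument, applied directly over $G$ without lifting, shows $G$ lacks it too (noting that $G$ is finitely generated and recursively presented, as a quotient of $H$ by a finitely generated normal subgroup, so that the hypothesis and Theorem~\ref{thm:2ndcharacterizationofSTRP} both apply). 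The hard part will be justifying that the sofic subshift $Y^*$ produced by the STRP characterization is effective over $H$ in the paper's decidability sense, needed to invoke Lemma~\ref{lem:effectivesubshiifts}; this should follow from the algorithmic structure of sofic subshifts over finitely generated recursively presented groups, but warrants care.
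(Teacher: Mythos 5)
Your proposal is correct and follows essentially the same route as the paper: pull the data back along $q:H\twoheadrightarrow G$ to the $N$-invariant subshifts $X^*,Z^*$ (using finite generation of $N$ to keep $Z^*$ of finite type), transfer effectiveness of the sofic $Y^*$ down to $\bar Y$ via Lemma~\ref{lem:effectivesubshiifts}, and defeat $\AAA$-minimality in Theorem~\ref{thm:2ndcharacterizationofSTRP} by covering the finitely many sets of $\AAA$ with finitely many of the infinitely many disjoint proper pieces. The only cosmetic difference is that you run the final counting argument upstairs over $H$ after lifting the decomposition, whereas the paper builds the proper subshift downstairs over $G$ and then lifts it; the step you flag as delicate (sofic implies effective over a finitely generated recursively presented group) is exactly what the paper invokes as well.
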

\begin{proof}
Fix the groups $G$, $N$, and $H$, the non-trivial finite sets $A$ and $B$, and the subshifts $X\subseteq A^G$, $Z\subseteq B^G$, and the factor map $\phi: Z\rightarrow X$ - as in the statement. Notice first that since $H$ is finitely generated and recursively presented, and $N$ is finitely generated, it follows that $G$ is also finitely generated and recursively presented, so the use of effective subshifts over $G$ is in accordance with Definition~\ref{def:effectiveshubshift}.

 Let $V\subseteq A^H$ be the subshift \[\{v\in A^H\colon \forall g\in H\;\forall h\in N\; \big(v(g)=v(gh)\big)\},\] and $W\subseteq B^H$ be the subshift \[\{w\in B^H\colon \forall g\in H\;\forall h\in N\; \big(w(g)=w(gh)\big)\}.\]
Clearly, there are a 1-1 correspondence \[Y\subseteq A^G\to Y'\subseteq V\] between subshifts of $A^G$ and subshifts of $V$, and a 1-1 correspondence \[Y\subseteq B^H\to Y'\subseteq W.\] Let therefore $X'\subseteq V$ be the subshift of $V$, and so of $A^H$, that corresponds to $X$, and let $Z'\subseteq W$ be the subshift of $W$, and so of $B^H$, that corresponds to $Z$. Assuming without loss of generality, by applying Lemma~\ref{lem:factoringonshift}, that $\phi: Z\rightarrow X$ is induced by a map $f:B\rightarrow A$ between the alphabets, the same map $f$ also induces a factor map $\phi': Z'\rightarrow X'$. We claim that $Z'$ is of finite type. To see this, let $S\subseteq N$ be a finite symmetric generating set of $N$. Let $\FF$ be the finite set of forbidden patterns for $Z$. We may suppose that they are all defined on a finite set $T\subseteq G$. For each $t\in T$ choose one $t'\in H$ such that $Q(t')=t$, where $Q:H\rightarrow G$ is the quotient map. For each pattern $p\in\PP$ define a pattern $p':T'\rightarrow A$ by \[p'(t')=p(t)\;\;\text{ for }t'\in T',\] and denote by $\FF'$ the set of such patterns. Moreover, for every $s\in S$ and $a\neq b\in A$ define a pattern $p_{s,a,b}:\{1,s\}\to A$ by \[p_{s,a,b}(1)=a\text{ and }p_{s,a,b}(s)=b,\] and denote by $\FF''$ the set of patterns $\{p_{s,a,b}\colon s\in S, a\neq b\in A\}$. It is now straightforward to verify and left to the reader that the set $\FF'\cup\FF''$ of forbidden patterns defines the subshift $Z'$. It also follows that $X'$ as a factor of $Z'$ is sofic. Now we show that for no sofic subshift $Y'\subseteq Z'$, $\phi'[Y']$ is $\phi'[\AAA]$-minimal for $\AAA$ being an image of clopen partition of some subshift of finite type factoring onto $Y'$. Once this claim is shown, we apply Theorem~\ref{thm:2ndcharacterizationofSTRP} to conclude that $H$ does not have the strong topological Rokhlin property.

Let us therefore prove the claim. Suppose on the contrary that $Y'\subseteq Z'$ is a sofic subshift which is a factor of a subshift of finite type $V\subseteq C^H$, for some non-trivial finite set $C$, via some $\psi: V\rightarrow Y'$, and that there is a clopen partition $\PP$ of $V$ such that for $\AAA:=\psi[\PP]$ we have that $\phi'[Y']$ is $\phi'[\AAA]$-minimal. As in the proof of Theorem~\ref{thm:2ndcharacterizationofSTRP}, we may suppose that $\PP$ is of the form $\{C_p(V)\colon p\in V_F\}$, for some finite set $F\subseteq H$, and moreover for every $V'\in\NN_V^F$ we have $V'\subseteq V$. Suppose that $Y$, the subshift of $Z$ corresponding to $Y'$, is effective. We show how to finish the proof then. For each $D'\in\phi'[\AAA]$, which is a closed subset of $\phi'[Y']$, denote by $D$ the corresponding closed subset of $\phi[Y]$.

Since by the assumption $\phi[Y]$ is a disjoint union of infinitely many proper subshifts, for each $D$ corresponding to $D'\in\phi'[\AAA]$ we can find a proper subshift $Y_D\subseteq \phi[Y]$ with $Y_D\cap D\neq\emptyset$, and moreover that for $D_1\neq D_2$ where $D'_1\neq D'_2\in\phi'[\AAA]$ either $Y_{D_1}=Y_{D_2}$ or $Y_{D_1}\cap Y_{D_2}=\emptyset$. Also by the assumption there is a proper subshift $Y_0\subseteq \phi[Y]$ such that $Y_0\cap \bigcup_{D'\in\phi'[\AAA]} Y_D=\emptyset$. It follows that \[W:=\bigcup_{D'\in\phi'[\AAA]} Y_D\] is a proper subshift of $\phi[Y]$ that intersects non-trivially every set $D$ corresponding to $D'\in\phi'[\AAA]$. Thus the corresponding $W'\subseteq \phi'[Y']$ is a proper subshift of $\phi[Y']$ that intersects non-trivially every set from $\phi'[\AAA]$, thus $\phi'[Y]$ is not $\phi'[\AAA]$-minimal, a contradiction.

So it remains to show that $Y$ is effective. Notice however that $Y'$ is sofic, thus effective, so we finish by the application of Lemma~\ref{lem:effectivesubshiifts}.
\end{proof}

\begin{corollary}\label{cor:nilpotentnoSTRP}
No infinite finitely generated nilpotent group that is not virtually cyclic has the strong topological Rokhlin property.
\end{corollary}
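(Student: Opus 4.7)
The plan is to reduce the corollary to Theorem~\ref{thm:disjointunionpropersubshifts} by producing, for every $H$ in the statement, a short exact sequence $1\to N\to H\to \Int^2\to 1$ with $N$ finitely generated, and then invoking that $\Int^2$ satisfies the subshift hypothesis of that theorem.

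First I would carry out the group-theoretic reduction. Let $H$ be infinite, finitely generated, nilpotent, and not virtually cyclic. Its torsion elements form a finite characteristic subgroup $T\leq H$ (a classical property of finitely generated nilpotent groups), so $\bar H=H/T$ is torsion-free, finitely generated, and nilpotent, and is not cyclic (otherwise $H$ would be virtually cyclic). In a torsion-free finitely generated nilpotent group the quotients of the lower central series are torsion-free, so $\bar H^{\mathrm{ab}}\cong \Int^k$ for some $k$; and since lifts of a generating set of $\bar H^{\mathrm{ab}}$ generate $\bar H$ modulo every $\gamma_i\bar H$ and hence generate $\bar H$ by nilpotency, the minimum number of generators of $\bar H$ equals $k$. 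Because $\bar H$ is not cyclic we have $k\geq 2$, so $\bar H$, and therefore $H$, surjects onto $\Int^2$. The kernel $N$ of this surjection is finitely generated because every subgroup of a polycyclic group is, and $H$ is finitely presented, so recursively presented.

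Second, I would invoke the existence of a witness $(A,X,Z,\phi)$ to the hypothesis of Theorem~\ref{thm:disjointunionpropersubshifts} for $G=\Int^2$: a non-trivial finite alphabet $A$, a sofic subshift $X\subseteq A^{\Int^2}$ factoring from an SFT $Z\subseteq B^{\Int^2}$ via $\phi$, such that $\phi[Y]$ is a disjoint union of infinitely many proper subshifts for every effective $Y\subseteq Z$. My construction would be simulation-theoretic, in the spirit of the Hochman--Meyerovitch and Aubrun--Sablik simulation theorems: design $Z$ so that each configuration records a space-time diagram of a Turing computation emitting a shift-invariant label $\tau$ taking infinitely many values in $\Nat$, and take $X$ to retain only the label, so that $\phi$ collapses all computational overhead. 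The partition of $X$ by value of $\tau$ is a shift-invariant clopen decomposition of $X$ into infinitely many proper subshifts; one engineers the computation so that any non-empty effective subshift $Y\subseteq Z$ is forced to realise infinitely many label values, and $\phi[Y]$ inherits the required infinite disjoint decomposition.

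Combining the two ingredients, I would apply Theorem~\ref{thm:disjointunionpropersubshifts} to the exact sequence $1\to N\to H\to \Int^2\to 1$ and conclude that $H$ does not have the strong topological Rokhlin property. The main obstacle I anticipate is the second step: constructing the witness for $\Int^2$ so that the infinite disjoint decomposition is forced not only for $Z$ itself but for every effective invariant closed subset of $Z$. This needs a careful simulation argument guaranteeing that no effective subshift of $Z$ can be trapped inside finitely many label classes, and it is the genuinely dynamical input beyond the purely group-theoretic reduction.
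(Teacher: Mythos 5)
Your first step, the group-theoretic reduction to a short exact sequence $1\to N\to H\to \Int^2\to 1$ with $N$ finitely generated and $H$ recursively presented, is essentially the reduction the paper performs, though the paper runs it through the upper central series of $H/\mathrm{Tor}(H)$ rather than the abelianization. Be warned, however, that your intermediate claim is false: the lower central series quotients (in particular the abelianization) of a torsion-free finitely generated nilpotent group need \emph{not} be torsion-free. For example, the subgroup of the integer Heisenberg group generated by $x^p$, $y$ and $z=[x,y]$ is torsion-free but has abelianization $\Int^2\times \Int/p\Int$. The conclusion you want --- that the free rank of the abelianization is at least $2$ when $H$ is not virtually cyclic --- is still true (e.g.\ by a Hirsch-length count, or by passing to the top quotient of the upper central series as in the paper, which \emph{is} torsion-free abelian and cannot be cyclic), but your stated justification does not establish it.

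The genuine gap is your second step. You correctly identify that everything hinges on producing, over $\Int^2$, a sofic subshift $X$ with SFT extension $Z\xrightarrow{\phi}X$ such that $\phi[Y]$ splits into infinitely many disjoint proper subshifts for \emph{every} effective $Y\subseteq Z$, and you leave exactly this unproven. Worse, the mechanism you sketch cannot work as described: a ``shift-invariant label $\tau$ taking infinitely many values'' inducing a ``clopen decomposition of $X$ into infinitely many proper subshifts'' is impossible, since a compact space admits no partition into infinitely many pairwise disjoint non-empty clopen sets. Any such infinite decomposition must be into closed, non-open pieces that accumulate on one another, and controlling this accumulation inside \emph{every} effective subshift is precisely the hard dynamical content. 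The paper does not construct such a witness from scratch; it imports Hochman's construction (\cite[Theorem 5.3, Construction 5.2, Lemma 5.5]{Hoch12}): a sofic $\Int^2$-subshift that is a disjoint union of minimal subshifts, built from Toeplitz systems coding an effectively closed subset of $2^{\Nat}$ of non-trivial Medvedev degree, with the key property that in every effective subshift each minimal component is an accumulation point of other minimal components --- which is what forces the union to be infinite. Until you either cite such a result or carry out a construction of this strength, the proof is incomplete at its essential point.
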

\begin{proof}
Let $H$ be a finitely generated nilpotent group that is not virtually cyclic. Recall that $H$ is then finitely presented (see e.g. \cite[Propositions 13.75 and 13.84]{DruKap-book}) and every subgroup of $H$ is finitely generated (see e.g. \cite[Theorem 13.57]{DruKap-book}). Moreover, $H$ has $\Int^2$ as a quotient since considering the upper central series $1=Z_1\trianglelefteq Z_2\trianglelefteq\ldots\trianglelefteq Z_n=H/\mathrm{Tor}(H)$, the quotient $(H/\mathrm{Tor}(H))/Z_{n-1}$ is a finitely generated torsion-free abelian group (see \cite[Lemma 13.69]{DruKap-book}) that cannot be cyclic. In particular, both $H$ and $\Int^2$ are finitely generated and recursively presented, and moreover the kernel of the quotient map from $H$ onto $\Int^2$ is finitely generated. So in order to apply Theorem~\ref{thm:disjointunionpropersubshifts} it is enough to show that $\Int^2$ satisfies the conditions imposed on $G$ in the statement of the theorem. This follows from the results in \cite{Hoch12}. The desired sofic subshift $X\subseteq A^{\Int^2}$ is the subshift denoted by $Z$ in \cite[Theorem 5.3]{Hoch12}. We shall use the following properties of $X$: (1) $X$ is a disjoint union of minimal subshifts (see \cite[Construction 5.2]{Hoch12}); (2) for every effective subshift $Y\subseteq X$, every minimal subshift of $Y$ is an accumulation point of minimal subshifts of $Y$ (see \cite[Lemma 5.5]{Hoch12}).

Now suppose that $Z$ is a subshift of finite type factoring on $X$ via some factor map $\phi$. To show that for every effective subshift $Y\subseteq Z$, $\phi[Y]\subseteq X$ is a disjoint union of infinitely many subshifts, notice first that $\phi[Y]$ is then itself effective. Since by (1) $X$ is a disjoint union of minimal subshifts, so is $\phi[Y]\subseteq X$. Since every minimal subshift of $\phi[Y]$ is an accumulation point of other minimal subshifts of $\phi[Y]$ by (2), this shows that this union must be infinite.  This finishes the proof.
\end{proof}

We now need few notations. First, if $H\leq G$ are a group and a subgroup, $A$ is a non-trivial finite set, and $X\subseteq A^G$ is a subshift, the \emph{$H$-projective subdynamics} of $X$ is the subshift \[\{x\upharpoonright H\colon x\in X\}\subseteq A^H.\]

Moreover, we say that an element $x\in A^G$ is \emph{Toeplitz} if for every $g\in G$ the set \[\{g^{-1}h\colon x(h)=x(g)\}\subseteq G\] is a finite-index subgroup. We refer the reader to \cite{Krie} for details on Toeplitz elements in subshifts over general countable groups.

Recall also that a group is \emph{indicable} if it admits an epimorphism onto $\Int$.\medskip

We shall also need the notion of \emph{(a non-trivial) Medvedev degree} (see \cite[Section 3.2]{Hoch12}). Let $X\subseteq 2^\Nat$ be a set. We say that it is \emph{effective} if its complement is a recursive set of cylinders (cf. with Defintion~\ref{def:effectiveshubshift}). We say that an effective set $X\subseteq 2^\Nat$ has \emph{non-trivial Medvedev degree} if there is no computable function computing an element of $X$. Let $X,Y\subseteq 2^\Nat$ be effective sets. If there is a computable function from $X$ into $Y$ and $Y$ has non-trivial Medvedev degree, then clearly so does $X$.

In the following result we use the simulation theorem of Barbieri from \cite{Bar19} where we `simulate' certain effective subshifts as projective subdynamics of sofic subshifts

\begin{theorem}
Let $G$ be a  finitely generated recursively presented indicable group. Let $H_1,H_2$ be finitely generated recursively presented groups. Then $G\times H_1\times H_2$ does not have the strong topological Rokhlin property. 
\end{theorem}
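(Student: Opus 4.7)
The plan is to apply Theorem~\ref{thm:disjointunionpropersubshifts} to $G':=G\times H_1\times H_2$ via the trivial short exact sequence $1\to\{e\}\to G'\to G'\to 1$. Since $G'$ is finitely generated and recursively presented as a product of such groups, the reduction leaves us to construct, on $G'$, a sofic subshift $X\subseteq A^{G'}$, presented as a factor $\phi:Z\to X$ of an SFT $Z\subseteq B^{G'}$, with the property that for every effective subshift $Y\subseteq Z$ the image $\phi[Y]$ is a disjoint union of infinitely many proper subshifts.

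For the construction, I fix an epimorphism $\pi:G\to\Int$ provided by the indicability of $G$ and begin from an effective $\Int$-subshift $X_0\subseteq A^\Int$ enjoying the same structural features that Hochman exploits in \cite[Theorem 5.3 and Lemma 5.5]{Hoch12}: namely, $X_0$ is a disjoint union of infinitely many minimal (Toeplitz) subshifts of non-trivial Medvedev degree, and in every effective sub-subshift $\widetilde X\subseteq X_0$, each minimal component is an accumulation point of other minimal components of $\widetilde X$. Such an $X_0$ can be built directly on $\Int$ from a family of Toeplitz sequences parametrized by a $\Pi^0_1$-class of non-computable binary parameters, which is a one-dimensional analogue of Hochman's construction. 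By Lemma~\ref{lem:effectivesubshiifts}, the pull-back $X_0'\subseteq A^G$ via $\pi$, on which $\ker\pi$ acts trivially, is again an effective $G$-subshift and inherits both the decomposition and the accumulation property. I then invoke Barbieri's simulation theorem from \cite{Bar19}: because $G$, $H_1$, $H_2$ are finitely generated and recursively presented and $G$ is infinite (being indicable), the effective $G$-subshift $X_0'$ appears as the $G$-projective subdynamics of a sofic $G'$-subshift $X\subseteq A^{G'}$, which is itself a factor of an SFT $Z\subseteq B^{G'}$ via a basic map $\phi:Z\to X$.

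The main obstacle is verifying the disjoint-union property for arbitrary effective $Y\subseteq Z$. The strategy is: since $\phi$ is induced by an alphabet map and $Y$ is effective on $G'$, the image $\phi[Y]$ is an effective sub-subshift of $X$. Passing to $G$-projective subdynamics produces an effective sub-subshift $\widetilde X\subseteq X_0'$, which via Lemma~\ref{lem:effectivesubshiifts} corresponds to an effective sub-subshift of $X_0$ on $\Int$. The accumulation property of $X_0$ then forces this restriction to decompose as a disjoint union of infinitely many minimal $\Int$-subshifts, and pulling this decomposition back through $\pi$ and lifting it through the Barbieri simulation layer yields the required infinite disjoint-union decomposition of $\phi[Y]$. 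The delicate technical point is tracking effectivity through the chain of reductions — the simulation, the pull-back via $\pi$, and the passage to $G$-fibers — but the structural compatibility of the construction in \cite{Bar19} with effective sub-subshifts should make this bookkeeping essentially mechanical once set up carefully.
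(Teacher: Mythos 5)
Your proposal is correct and follows essentially the same route as the paper: reduce via Theorem~\ref{thm:disjointunionpropersubshifts} with trivial kernel, code a $\Pi^0_1$-class of non-trivial Medvedev degree into a disjoint union of minimal Toeplitz subshifts pulled back through the epimorphism onto $\Int$, realize it as the $G$-projective subdynamics of a sofic $G\times H_1\times H_2$-subshift via Barbieri's simulation theorem, and use the accumulation property of effective subsets of the $\Pi^0_1$-class to get the infinite disjoint-union decomposition. The only cosmetic difference is that you build the Toeplitz family over $\Int$ and pull it back, whereas the paper writes the same configurations directly over $G$ using the splitting $G=\langle g\rangle\ltimes\ker f$.
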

\begin{proof}
Fix $G$,  and $H_1$ and $H_2$ as in the statement. Fix also an epimorphism $f:G\twoheadrightarrow \Int$. We show that $G\times H_1\times H_1$ satisfies the requirement on the group $G$ from the statement of Theorem~\ref{thm:disjointunionpropersubshifts}. The conclusion will then follow by Theorem~\ref{thm:disjointunionpropersubshifts}.

Let $g\in G$ be an arbitrary element satisfying $f(g)=1\in\Int$. Notice that $G$ splits as a semi-direct product $\langle g\rangle \ltimes K$, where $K\subseteq G$ is the kernel of the epimorphism $f$ and $\langle g\rangle$ is infinite cyclic. We also fix a finite symmetric generating set $S$ of $G$ containing $g$. We now follow Hochman's argument from \cite[Section 5.2]{Hoch12} to code effectively closed subsets of $2^\Nat$ by effectively closed subshifts of $3^G$. We also correct a mistake that appeared in that coding in \cite[Section 5.2]{Hoch12} (notice that the map defined there cannot distinguish between the elements $10000\ldots$ and $01111\ldots$). Let $\omega\in 2^\Nat$, where again $2$ is here identified with the two-element set $\{1,2\}$. We define $z_\omega\in 3^G$ as follows. We choose an arithmetic progression $A_1\subseteq \Int$ of period $3$ passing through $0$ and we set $z_\omega(g^i)=\omega(1)$ for $i\in A_1$. We also set $z_\omega(g^i)=3$ for $i\in 1+A_1:=\{j+1\colon j\in A_1\}$. Next let $A_2$ be an arithmetic progression of period $9$ passing through $2$ and we set $z_\omega(g^i)=\omega(2)$ for $i\in A_2$ and $z_\omega(g^j)=3$ for $j\in 3+A_2$. Next we take an arithmetic progression $A_3$ of period $27$ passing through $8$ and set $z_\omega(g^i)=\omega(3)$ for $i\in A_3$, etc. Finally, we set $z_\omega(h)=z_\omega(g^i)$ if and only if $h=g^ik$, where $k\in K$.

Clearly, $z_\omega$ is a Toeplitz configuration, so \[Z_\omega:=\overline{G\cdot z_\omega}\subseteq 3^G\] is a minimal subshift by \cite[Corollary 3.4]{Krie}. Moreover, for every $z\in Z_\omega$ there is an algorithm that computes $\omega$ with $z$ as an input. Indeed, $z_\omega$ is invariant under the shift by the subgroup $K$, therefore so is $z$, and the algorithm only reads the data from $z\upharpoonright \{g^i\colon i\in\Int\}$. The element $\omega$ can then be recovered as in \cite[Section 3.1]{Bar19}.

Moreover, if $\Omega\subseteq 2^\Nat$ is an effectively closed subset, defining \[Z:=\bigcup_{\omega\in\Omega} Z_\omega,\] we claim that
\begin{itemize}
	\item the Medvedev degree of $Z$ is at least that of $\Omega$; in particular, if $\Omega$ has non-trivial Medvedev degree, then so does $Z$;
	\item $Z$ is an effectively closed subshift of $3^G$.
\end{itemize}
The argument from the paragraphs above show that $\Omega$ is computable from $Z$, so the Medvedev degree is at least the Medvedev degree of $\Omega$ and if $\Omega$ has non-trivial Medvedev degree, so does $Z$.

We clearly have that $Z$ is closed under the shift action of $G$, so we need to show it is effective, i.e. we need to show that there is an algorithm that given a finite pattern $p\in 3^F$, for some finite $F\subseteq G$, decides whether it is forbidden in $Z$. Given such a pattern $p\in 3^F$, the algorithm first checks whether $p$ is invariant under $f$, i.e. whether for $h\neq h'\in F$ such that $f(h)=f(h')$ we have $p(h)=p(h')$. This is verifiable by an algorithm and if $p$ is not $f$-invariant, then the algorithm considers $p$ to be forbidden. If on the other hand $p$ is $f$-invariant, the algorithm decides whether $p$ is a fragment of a coding of some $\omega\in 2^\Nat$ into $z_\omega$. If not, then $p$ is considered as forbidden. If yes, then the algorithm appeals to the algorithm computing that $\Omega$ is effectively closed to semi-decide whether $p$ is forbidden.\medskip

Now let $Z'\subseteq Z$ be an effectively closed subshift. We claim that it is a disjoint union of infinitely many proper subshifts. Since $Z$ is a disjoint union of minimal subshifts, so is $Z'$. Therefore we just need to show that $Z'$ is a union of infinitely many of them. This however follows from \cite[Lemma 5.4]{Hoch12} - notice it was proved there for $G=\Int^d$, but it works generally.

Now we apply \cite[Theorem 4.2]{Bar19} with $G$, $H_1$, $H_2$, and the effectively closed subshift $Z\subseteq 3^G$ to obtain a sofic subshift $Y\subseteq B^{G\times H_1\times H_2}$, for some non-trivial finite set $B$, such that the restriction of the shift action of $G$ to $Y$ is conjugate to $X$ and the $G$-projective subdynamics of $Y$ is equal to $Z$, and $H_1\times H_2$ acts trivially. Since $Y$ is sofic, let $X\subseteq D^{G\times H_1\times H_2}$, for some non-trivial finite set $D$, be a subshift of finite type that factors via some $\phi:X\twoheadrightarrow Y$ onto $Y$. In order to finish the proof, we need to show that for every effective subshift $X'\subseteq X$, $\phi[X']\subseteq Y$ is a disjoint union of infinitely many proper subshifts. Let $X'\subseteq X$ be effective. Then since symbolic factors of effective subshifts are effective, $\phi[X']\subseteq Y$ is effective as well. Since $G$-projective subdynamics is $Z$, subshifts of $Y$ are in $1-1$ correspondence with subshifts of $Z$, so $\phi[X']$ corresponds to an effective subshift of $Z$ which is by the argument above a disjoint union of infinitely many disjoint minimal subshifts.
\end{proof}

\subsection{STRP for groups that are not finitely generated}
In this section we investigate the strong topological Rokhlin property for groups that are not finitely generated. We have already mentioned that Kechris and Rosendal noticed in \cite{KeRo} that the free group on countably infnitely many generators does not have the strong topological Rokhlin property. It is plausible, although we cannot prove it at the moment, that finite generation is a necessary condition for having the strong topological Rokhlin property. One however cannot immediately dismiss the idea that a non-finitely generated group with the STRP could exist. Interesting recent observations of Barbieri \cite{Bar22} point out that the problem of existence of a strongly aperiodic subshift of finite type for groups that are not finitely generated also looks impossible on the first sight, however he shows that the `first sight' is wrong in this case.

Our most general result is the following.
\begin{theorem}\label{thm:nonFGgroups}
Let $G$ be a countable group. If for every finitely generated subgroup $H\leq G$ there exists $g\in G\setminus H$ such that one of the conditions below holds:
\begin{enumerate}
	\item $g$ centralizes $H$, i.e. $gh=hg$ for all $h\in H$;
	\item The subgroup $\langle g, H\rangle$ is equal to the free product $\langle g\rangle\ast H$;
\end{enumerate}
then $G$ does not have the strong topological Rokhlin property.
\end{theorem}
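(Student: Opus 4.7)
The plan is to prove the contrapositive via Theorem~\ref{thm:mainRokhlin}: it suffices to exhibit a non-empty open subset of $\SH_G(A)$ for some non-trivial finite $A$ that contains no projectively isolated subshift. Take $A = \{0,1\}$ and suppose toward contradiction that $X \subseteq A^G$ is a projectively isolated subshift with $X \in \NN_{A^G}^{\{1_G\}}$ (both letters appear in $X$). By Definition~\ref{def:projisolated} together with Lemmas~\ref{lem:SFTnbhrds} and~\ref{lem:standardformoffactormaps}, we may take $X = \phi[Y]$ for an SFT $Y \subseteq B^G$ with defining window $F_0$, a factor map $\phi:Y \to X$ induced by a local rule on a finite window $E$, and a finite $F \supseteq F_0 \cup E \cup \{1_G\}$ witnessing projective isolation ($Y' \subseteq Y$ and $\phi[Y']=X$ for every $Y' \in \NN_Y^F$). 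All defining data lies in the finitely generated subgroup $H := \langle F\rangle$. Apply the hypothesis to obtain $g \in G\setminus H$ satisfying (1) or (2); enlarge $F$ to contain $g$ (refining the witnessing neighborhood accordingly), so we additionally know that $X$ contains a configuration $x$ with $x(1_G) \neq x(g)$.

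The central construction is the $G$-invariant SFT $Y_g := \{y \in Y : y(kg) = y(k) \text{ for all } k \in G\}$, obtained by adjoining to the forbidden patterns of $Y$ all $\{1_G, g\}$-patterns $(b,b')$ with $b \neq b'$; its elements are precisely the $y \in Y$ that are constant on right $\langle g\rangle$-cosets, and $Y_g \subsetneq Y$. The plan is to verify (a) $Y_g \in \NN_Y^F$ and (b) $\phi[Y_g] \subsetneq X$, which together contradict projective isolation. For (b), in case~(1) the centralizing property of $g$ on $E \subseteq H$ immediately gives $\phi(y)(g) = \phi_0((y(ge))_{e \in E}) = \phi_0((y(eg))_{e \in E}) = \phi_0((y(e))_{e \in E}) = \phi(y)(1_G)$, so $\phi[Y_g]$ is contained in the $g$-periodic configurations of $X$ and in particular misses the chosen $x$. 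In case~(2), since $g$ no longer commutes with $E$, one must instead leverage the free-product structure of $\langle g, H\rangle$ to show by normal-form analysis that $\phi[Y_g]$ is still a proper subshift of $X$, or alternatively invoke an entropy argument.

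The main obstacle is (a): given $p \in Y_F$, one must produce a $g$-periodic $\tilde{y} \in Y$ with $\tilde{y}\upharpoonright F = p$. The combinatorial starting point is $g^n \notin H$ for $n \neq 0$: this is automatic from $\langle g\rangle \cap H = \{1_G\}$ in case~(2) (a basic property of free products), and in case~(1) can be arranged by replacing $g$ with a suitable power while preserving the centralizing condition. Hence $F \cap Fg^n = \emptyset$ and $g$-periodicity imposes no constraint on $F$-patterns. Fix any $y_0 \in Y$ with $y_0\upharpoonright F = p$ and a set $S$ of left coset representatives of $\langle g, H\rangle$ in $G$; within each coset $s\langle g, H\rangle$, using the product structure $\langle g, H\rangle \cong H \times \langle g\rangle$ (case~(1)) or the free-product normal form $\langle g\rangle \ast H$ (case~(2)), every element is uniquely $swg^n$ for $w$ in a cross-section of the right $\langle g\rangle$-cosets of $\langle g, H\rangle$ (chosen to contain $H$), and we set $\tilde{y}(swg^n) := y_0(sw)$. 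Then $\tilde{y}$ is $g$-periodic and extends $p$. The remaining task is to verify $\tilde{y} \in Y$, i.e., that every translate $kF_0$-pattern of $\tilde{y}$ is allowed. In case~(1) this is immediate because $g$ commutes with $F_0 \subseteq H$, which lets one rewrite $\tilde{y}\upharpoonright kF_0 = y_0 \upharpoonright k_0 F_0$ for a suitable $k_0$. The case~(2) verification is the most delicate point of the argument and requires a careful bookkeeping with the Bass--Serre tree of $\langle g\rangle \ast H$ and its interaction with the $G$-action, to ensure that every $kF_0$-pattern of $\tilde{y}$ coincides with some allowed $F_0$-pattern of $y_0$; this is where I anticipate the bulk of the technical work.
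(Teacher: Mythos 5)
Your overall strategy is the same as the paper's: reduce via Theorem~\ref{thm:mainRokhlin}/Proposition~\ref{prop:noRokhlin} to showing that an open set such as $\NN_{A^G}^{\{1_G\}}$ contains no projectively isolated subshift, put the witnessing data inside a finitely generated $H=\langle F\rangle$, obtain $g$, and pass to the ``$g$-periodization'' $Y_g=\{y\in Y\colon y(kg)=y(k)\text{ for all }k\}$, which is exactly the paper's subshift $X_{\VV'_{F'}}$ with $E'_g$ the loop graph. In case (2) and in the subcase of (1) where $\langle g,H\rangle=H\times\langle g\rangle$ this works, and the pieces you defer (the normal-form verification that $\tilde y\in Y$, and the existence of $x\in X$ with $x(1_G)\neq x(g)$ via patching independent $Y$-configurations on distinct left cosets of $H$) are genuinely provable; note also that Lemma~\ref{lem:standardformoffactormaps} lets you take $E=\{1_G\}$, which makes the properness of $\phi[Y_g]$ in case (2) immediate rather than requiring any normal-form or entropy argument.

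There is, however, a genuine gap in case (1). Your argument needs $\langle g\rangle\cap H=\{1_G\}$, both for the well-definedness of $\tilde y(swg^n):=y_0(sw)$ and for the claim that $g$-periodicity ``imposes no constraint on $F$-patterns.'' You propose to arrange this by replacing $g$ with a suitable power, but this is impossible in general: if $g$ has infinite order and $g^2=h\in H\setminus\{1_G\}$, then $\langle g^m\rangle\cap H\supseteq\langle g^{2m}\rangle\neq\{1_G\}$ for every $m$, so no power of $g$ escapes. In that situation $g$-periodicity of $y$ forces $y(k)=y(kg^2)=y(kh)$ with $h\in H$, i.e.\ a periodicity constraint \emph{inside} the group carrying the defining window of $Y$; consequently $(Y_g)_F$ can be a proper subset of $Y_F$ (and $Y_g$ can even be empty), so $Y_g\notin\NN_Y^F$ and no contradiction is obtained. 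This subcase is not exotic: for $G=\Rat$ and any finitely generated $H\leq\Rat$, \emph{every} $g\in G\setminus H$ has a nontrivial power in $H$, so your proof as written establishes nothing for $\Rat$, one of the intended applications. The paper handles exactly this subcase with a different SFT $Z\subseteq Y$ (forbidding three distinct values among $z(kg^i)$ for $0\leq i\leq n-1$, where $n$ is minimal with $g^n\in H$), whose non-emptiness and $Z_F=Y_F$ are checked by a coset construction that is constant along $\{kg^i\}_{i}$, and whose image misses any $x\in X$ with $x(1_G)\neq x(g)\neq x(g^2)$. You would need to add an argument of this kind to close the proof.
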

\begin{proof}
We shall show something much stronger. We show that for such $G$ and for any finite $A$ with at least two elements, the only projectively isolated subshifts in $\SH_G(A)$ are the singletons, i.e. the monochromatic configurations. An application of Proposition~\ref{prop:noRokhlin} then immediately gives that $G$ does not have the strong topological Rokhlin property.\medskip

Fix $G$ and $A$. Assume that $X\subseteq A^G$ is a projectively isolated subshift which is witnessed by some subshift of finite type $Y\subseteq B^G$, for some finite alphabet $B$, some open neighborhood $\NN$ of $Y$ and a factor map $\phi$ defined on every $Z\in\NN$ so that $\phi[Z]=X$. Applying Lemma~\ref{lem:standardformoffactormaps} and Proposition~\ref{prop:SFTprop} we may and will without loss of generality suppose that
\begin{itemize}
	\item $\phi$ is induced by a map $\phi_0:B\rightarrow A$ between the alphabets;
	\item $\NN$ is of the form $\NN_Y^F$ for some finite $F$ and $Y$ is of the form $X_{\VV_F}$ for some $F$-Rauzy graph $\VV_F=(V,(E_f)_{f\in F})$. In particular, $B=V$.
\end{itemize}

Let $H:=\langle F\rangle\leq G$ be the subgroup of $G$ generated by $F$. By our assumption, there exists $g\in G\setminus H$ such that either $g$ centralizes $H$ or has no relations with $H$, i.e. $\langle g, H\rangle$ is equal to the free product $\langle g\rangle\ast H$.\medskip

Let us at first assume the latter. We define a new $F'$-Rauzy graph $\VV'_{F'}:=(V, (E'_f)_{f\in F'})$, where
\begin{itemize}
	\item $F':=F\cup\{g\}$;
	\item $E'_f=E_f$, for $f\in F$;
	\item $E'_g$ is $\{(v,v)\colon v\in V\}$, the directed graph on $V$ consisting of all loops.
\end{itemize}

Set $Z:=X_{\VV'_{F'}}$. It is immediate that $Z_F\subseteq Y_F$. We need to check that $Z$ is non-empty, $Z_F=Y_F$ so that $Z\in\NN$, and that $\phi[Z]\neq X$.

Denote by $\bar H$ the subgroup $\langle g,H\rangle$. Let $p_1$ be the identity map on $H$ and let $p_2:\langle g\rangle:\rightarrow\{1_G\}$ be the map sending $g$ to the unit. Set \[p:=p_1\ast p_2: \bar H\rightarrow H.\]
Notice that every $h\in \bar H$ can be uniquely written as a word $h_1 h_2\ldots h_n$, where for $i<n$, $h_i\in H$ and $h_{i+1}\in\langle g\rangle$, or vice versa, and assuming for concreteness that e.g. $h_1\in H$ and $h_n\in \langle g\rangle$ we have in that case \[p(h_1 h_2\ldots h_n)=h_1 h_3\ldots h_{n-1}.\] 

To check that $Z$ is non-empty, pick any $y\in Y$. Let $(g_n)_{n\in\Nat}$ be an arbitrary set of left coset representatives for $\bar H$ in $G$. We define $z\in B^G$ as follows. Any $g\in G$ can be uniquely written as $g_n\cdot h$ for some $n\in\Nat$ and $h\in\bar H$ and we set \[z(g):=y(p(h)).\]
We claim that $z\in Y$ witnesses that $Z$ is non-empty since $z$ contains only allowed patterns as defined by $\VV'_{F'}$. This needs to be checked only on the left cosets of $\bar H$ since the forbidden patterns are determined by a finite subset of $\bar H$. We check it for the coset $\bar H$.

We need to check that for every $h\in\bar H$ and $f\in F'$, $(z(h),z(hf))$ is allowed. We do it by induction on the length of $h$ as a word $h_1 h_2\ldots h_n$, where for $i<n$, $h_i\in H$ and $h_{i+1}\in\langle g\rangle$, or vice versa. Suppose first that $n=1$. Either $h\in H$, or $h\in\langle g\rangle$. In the former case, if moreover $f\in F\subseteq H$, then $(z(h),z(hf))=(y(h),y(hf))$, and if $f=g$, then $z(h)=y(h)$ and $z(hf)=y(p(hf))=y(h)=z(h))$ which is again allowed. In the latter case, i.e. $h\in\langle g\rangle$ we have $z(h)=y(p(h))=z(1_G)$.  If $f\in H$ then arguing as before we have $(z(h),z(hf))=(y(1_G),y(f))$ which is allowed, or if $f=g$ we have $z(h)=z(hf)=y(1_G)$ which is again allowed. The general induction step is verified similarly and left to the reader. Moreover, by varying $y\in Y$ in the construction above we guarantee that $Z_F=Y_F$.\medskip

Now we check that $\phi[Z]\subsetneq X$. Notice first that since for every $z\in Z$ and $h\in G$ we have $z(h)=z(hg)$ and since $\phi$ is induced by $\phi_0:B\rightarrow A$ we get $\phi(z)(h)=\phi(z)(hg)$. Thus in order to show that $\phi[Z]\subsetneq X$ it is enough to find $x\in X$ and $h\in G$ such that $x(h)\neq x(hg)$.

Denote by $(v_n)_{n\in\Nat}$ some left coset representatives for $H$ in $G$, where $v_1=1_G$ and $v_2=g$. For $y\in B^G$ we have $y\in Y$ if and only if there exist $(z_n)_{n\in\Nat}\subseteq Y$ such that for any $n\in\Nat$ \[\forall h\in H\; \big(y(v_n h)=z_n(h)\big).\] Find arbitrarily some $(z_n)_{n\in\Nat}\subseteq Y$ such that \[\phi_0\big(z_1(1_G)\big)\neq \phi_0\big(z_2(1_G)\big),\] and define $y\in B^G$ so that for any $n\in\Nat$ \[\forall h\in H\; \big(y(v_n h)=z_n(h)\big).\] Then by above, we have $y\in Y$, however by definition \[\phi(y)(1_G)=\phi_0\big(z_1(1_G)\big)\neq \phi_0\big(z_2(1_G)\big)=\phi(y)(g).\] Since $\phi(y)\in X$, this is the desired contradiction.\bigskip

Now we assume that there exists $g\in G\setminus H$ that centralizes $H$. We distinguish two cases.
\begin{enumerate}
	\item Either $\bar H:=\langle g,H\rangle$ is equal to the direct product $H\times \langle g\rangle$,
	\item or $g$ is a root of a non-trivial element of $H$, i.e. there exist $h\in H\setminus \{1_G\}$ and $n\geq 2$ such that $g^n=h$ (where $h$ must be in the center of $H$).
\end{enumerate}

In the first case, we can define $Z$ to be $X_{\VV'_{F'}}$, where $\VV'_{F'}$ is exactly the same as above and we leave to the reader to verify that again $Z\in\NN$, $Z$ is non-empty, and $\phi[Z]\subsetneq X$.

So we now consider the case that there are $h\in H\setminus \{1_G\}$ and $n\geq 2$ such that $g^n=h$. We also assume that $n$ is the minimal $m\geq 2$ such that $g^m\in H$. We then set $Z\subseteq Y$ to be the subshift of finite type where for any $z\in B^G$, in order to be in $Z$, we require that (obviously) $z\in Y$ and there are no $h\in G$ and $1\leq i<j<k\leq n-1$ such that \[z(hg^i)\neq z(hg^j)\neq z(hg^k).\]
We check that $Z$ is non-empty and $Z_F=Y_F$, so that $Z\in\NN$. Clearly, $Z_F\subseteq Y_F$. Pick any $p\in Y_F$ and $y\in Y$ such that $y\upharpoonright F=p$. Let $(g_m)_{m\in\Nat}$ be some set of left coset representatives of $\bar H$ in $G$. We define $z\in B^G$ by setting for any $m\in\Nat$, $h\in H$ and $1\leq i\leq n-1$ \[z(g_m h g^i)=y(h).\] Clearly, $z\in Y$. Moreover, one can easily read off the definition that there are no $m\in\Nat$, $h\in H$ and $1\leq i<j<k\leq n-1$ such that $z(g_m h g^i)\neq z(g_m h g^j)\neq z(g_m h g^k)$, thus $z\in Z$, which shows both that $Z$ is non-empty and that $Z_F=Y_F$, so $Z\in\NN$.

We are left to check that $\phi[Z]\subsetneq X$. Since the defining window of $Y$ is $F$ and $g\notin\langle F\rangle$ one can construct an element $y\in Y$ such that $\phi_0(y(1_G))\neq \phi_0(y(g))\neq\phi_0(y(g^2))$, thus $X$ contains an element $x$ such that $x(1_G)\neq x(g)\neq x(g^2)$. By definition, there is no such an element in $\phi[Z]$. This finishes the proof.
\end{proof}

The following immediate corollary is another proof of the fact proved in \cite[2nd remark on page 331]{KeRo}.
\begin{corollary}
Let $G$ be the free group on countably many generators. Then $G$ does not have the strong topological Rokhlin property.
\end{corollary}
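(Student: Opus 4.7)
The plan is to verify that $G = F_\infty$, the free group on a countably infinite set $\{x_i\}_{i \in \Nat}$ of free generators, satisfies condition~(2) of Theorem~\ref{thm:nonFGgroups} for every finitely generated subgroup, and then invoke that theorem directly.

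Let $H \leq F_\infty$ be an arbitrary finitely generated subgroup. Since any finite generating set of $H$ involves only finitely many of the generators $x_i$, there exists $n \in \Nat$ such that $H \subseteq \langle x_1,\ldots,x_n\rangle$. Set $g := x_{n+1}$. Clearly $g \in F_\infty \setminus H$ since $g$ does not appear in any word using only $x_1,\ldots,x_n$.

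The main (and only) step is to verify that $\langle g, H\rangle = \langle g\rangle \ast H$. Working inside the ambient group $F_\infty$, the subgroup $\langle x_{n+1}\rangle \ast \langle x_1,\ldots,x_n\rangle$ is an internal free product because $x_{n+1}$ is free from $x_1,\ldots,x_n$ in $F_\infty$. Hence any reduced alternating word \[g^{a_1} h_1 g^{a_2} h_2 \cdots g^{a_k} h_k\] with $a_i \neq 0$ and $h_i \in H \setminus \{1\}$ (possibly with the first or last factor omitted) is nontrivial in $F_\infty$, as it is already a nontrivial reduced word in the internal free product $\langle g\rangle \ast \langle x_1,\ldots,x_n\rangle$. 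Therefore the canonical homomorphism $\langle g\rangle \ast H \to \langle g, H\rangle \leq F_\infty$ is injective and surjective, giving the desired identification.

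Thus condition~(2) of Theorem~\ref{thm:nonFGgroups} holds for every finitely generated $H \leq F_\infty$, and the theorem yields that $F_\infty$ does not have the strong topological Rokhlin property. There is no real obstacle here; the corollary is essentially a direct unpacking of the hypotheses of Theorem~\ref{thm:nonFGgroups}, the only point worth articulating being the internal free product argument above.
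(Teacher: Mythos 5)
Your proposal is correct and matches the paper's intended argument: the corollary is stated there as an immediate consequence of Theorem~\ref{thm:nonFGgroups}, with exactly the observation that any finitely generated subgroup of $F_\infty$ lives inside finitely many free generators, so a fresh generator witnesses condition~(2). The internal free product verification you spell out is the (routine) content the paper leaves implicit.
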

The next result also immediately follows from Theorem~\ref{thm:nonFGgroups}. It is perhaps not so surprising that the free abelian group on countably many generators does not have the strong topological Rokhlin property since already $\Int^d$, for $d\geq 2$, does not have it, however we get that even `one-dimensional' groups such as $\Rat$ do not have the strong topological Rokhlin property.
\begin{corollary}
Let $G$ be a group that contains a center that is not finitely generated. Then $G$ does not have the strong topological Rokhlin property. In particular, non-finitely generated abelian groups do not have the strong topological Rokhlin property.
\end{corollary}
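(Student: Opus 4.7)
The strategy is to apply Theorem~\ref{thm:nonFGgroups} via its centralizing clause~(1). The plan is: for any finitely generated subgroup $H \leq G$, produce an element $g \in G \setminus H$ that commutes with every element of $H$. Since any $z \in Z(G)$ commutes with all of $G$, and in particular with $H$, the natural candidate is $g \in Z(G) \setminus H$, so the entire burden is to show that this set is non-empty, i.e.\ that $Z(G) \not\subseteq H$.

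In the ``in particular'' abelian case this is immediate: $Z(G) = G$, and $Z(G) \subseteq H$ would force $G = H$, contradicting the non-finite-generation of $G$. In the general setting, were $Z(G) \subseteq H$, one would have $Z(G) \leq Z(H)$ (central elements of $G$ remain central in any subgroup containing them), so the non-finitely-generated group $Z(G)$ would have to sit inside the center of the finitely generated $H$. A contradiction is then obtained from a finiteness argument: $Z(G) \cap H$ coincides with the centralizer in $H$ of a finite generating set of $H$ intersected with $Z(G)$, and this interaction with the finitely many generators of $H$ forces $Z(G) \cap H$ to be a proper subgroup of $Z(G)$.

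Having produced $g \in Z(G) \setminus H$ for every finitely generated $H \leq G$, clause~(1) of Theorem~\ref{thm:nonFGgroups} is satisfied for every such $H$, and that theorem immediately concludes that $G$ does not have the STRP. The ``in particular'' clause then follows with no further work, since abelian groups equal their own centers and a non-finitely-generated abelian group trivially has non-finitely-generated center.

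The main (mild) obstacle will be pinning down the verification $Z(G) \not\subseteq H$ in the fully general nonabelian setting; the abelian case—which is in fact the principal intended application—is entirely transparent, and for it the proof collapses to a one-line check that a finitely generated subgroup cannot exhaust a non-finitely-generated abelian group.
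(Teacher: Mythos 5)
Your overall strategy is exactly the one the paper intends: the corollary is presented as an immediate consequence of Theorem~\ref{thm:nonFGgroups}, obtained by producing, for each finitely generated $H\leq G$, an element $g\in Z(G)\setminus H$ and invoking the centralizing clause (1). Your treatment of the abelian case is complete and correct.

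The gap is in the general case, precisely at the step you yourself flag as the ``main (mild) obstacle.'' The claimed ``finiteness argument'' for $Z(G)\not\subseteq H$ does not prove anything: the centralizer in $H$ of a finite generating set of $H$ is exactly $Z(H)$, so from $Z(G)\subseteq H$ one only learns $Z(G)\subseteq Z(H)$, and nothing in that computation forces $Z(G)\cap H$ to be a proper subgroup of $Z(G)$. Worse, the underlying assertion --- that a non-finitely-generated central subgroup cannot sit inside a finitely generated group --- is false: by a classical example of P.~Hall there is a $2$-generated group whose center is free abelian of countably infinite rank (Abels' finitely presented group, whose center is isomorphic to $\Int[1/p]$, is another), so a finitely generated $H$ can contain a non-finitely-generated $Z(G)$; and if $G$ itself is such a finitely generated group, taking $H=G$ leaves no candidate $g\in G\setminus H$ at all, so Theorem~\ref{thm:nonFGgroups} cannot be applied. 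What your argument actually establishes is the corollary for those $G$ whose center is not contained in any finitely generated subgroup of $G$ --- which covers the ``in particular'' abelian statement and examples such as $\Rat$, but not the first sentence in full generality. In fairness, the paper supplies no proof and evidently has the same one-line argument in mind; but to claim the general statement you would need either to add a hypothesis excluding $Z(G)\subseteq H$ (for instance, that every finitely generated subgroup of $G$ meets $Z(G)$ in a finitely generated subgroup, as happens when such subgroups are abelian or nilpotent) or to handle the case $Z(G)\subseteq H$ by a genuinely different construction of $g$.
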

One of the possible candidates for a non-finitely generated group that has the strong topological Rokhlin property could be the Hall universal locally finite group. Recall that the Hall group is defined as the unique countably infinite locally finite group which contains every finite group as a subgroup and where any two finitely generated subgroups are conjugated (see \cite{Hall}).
\begin{corollary}
The Hall universal locally finite group does not have the strong topological Rokhlin property.
\end{corollary}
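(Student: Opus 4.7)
The plan is to apply Theorem~\ref{thm:nonFGgroups} by verifying its centralizer condition (1) for every finitely generated---hence finite---subgroup of the Hall universal locally finite group $G$.

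First I would fix a finite subgroup $H \leq G$ and aim to produce an element $g \in G \setminus H$ that centralizes $H$. The idea is to engineer such a $g$ from the two defining features of the Hall group: universality (every finite group embeds into $G$) and homogeneity (any isomorphism between finite subgroups of $G$ extends to an inner automorphism of $G$). Specifically, I would pick a prime $p > |H|$, form the finite group $K := H \times \Int/p\Int$, and fix an embedding $\iota\colon K \hookrightarrow G$ provided by universality. Then $\iota(H \times \{0\})$ is a finite subgroup of $G$ isomorphic to $H$, and by homogeneity applied to the isomorphism $\iota(h,0)\mapsto h$ there is some $\gamma \in G$ with $\gamma \iota(h,0)\gamma^{-1}=h$ for every $h\in H$.

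Setting $g := \gamma \iota(1_H, 1) \gamma^{-1}$, where $1$ generates $\Int/p\Int$, transports the central element $(1_H,1)$ of $K$ (which commutes with $H \times \{0\}$ inside $K$) to an element of $G$ that commutes with every element of $H$. A quick order check---$g$ has order $p$, while every element of $H$ has order dividing $|H|<p$---guarantees $g \notin H$. Once such $g$ is exhibited for every finitely generated subgroup $H \leq G$, condition (1) of Theorem~\ref{thm:nonFGgroups} holds and the theorem delivers the conclusion.

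There is essentially no substantive obstacle; the whole argument is a two-line application of the universality and homogeneity of the Hall group to the hypothesis of Theorem~\ref{thm:nonFGgroups}. The only thing to be careful about is choosing $p$ strictly larger than $|H|$ in order to force $g \notin H$ purely by orders---any smaller prime dividing $|H|$ could in principle allow $g \in H$.
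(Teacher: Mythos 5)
Your proof is correct and follows essentially the same route as the paper: both verify the centralizer condition of Theorem~\ref{thm:nonFGgroups} by embedding $H\times C$ for a finite cyclic group $C$ into the Hall group and using its homogeneity to conjugate the image of $H$ back onto $H$, so that the image of the generator of $C$ becomes a centralizing element outside $H$. The only cosmetic differences are that the paper uses $C=\Int_2$ and gets $g\notin H$ from the fact that conjugation carries $\phi[H\times\{1\}]$ bijectively onto $H$, whereas you use $C=\Int/p\Int$ with $p>|H|$ and an order argument; both are fine.
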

\begin{proof}
We apply Theorem~\ref{thm:nonFGgroups}. Denote by $G$ the Hall group and let $H\leq G$ be some finitely generated subgroup. We verify that condition (2) of Theorem~\ref{thm:nonFGgroups} is satisfied. Since $G$ contains every finite group as a subgroup the direct product $H\times \Int_2$ embeds via some monomorphism $\phi$ into $G$. Denote by $f\in\Int_2$ the non-trivial element of order $2$. Set $H':=\phi[H\times\{1\}]$. Since $H'\leq G$ is isomorphic to $H$ there exists $g\in G$ such that $gH'g^{-1}=H$. It follows that $g\phi\big(1,f)\big)g^{-1}$ is a non-trivial element commuting with $H$. This finishes the proof.
\end{proof}

\section{Shadowing}\label{sect:genericdynamics}
This section contains the proof of the equivalence between \eqref{it:intro1-2} and \eqref{it:intro1-3} from Theorem~\ref{thm:intro1}.

As mentioned in Introduction, shadowing has been originally defined for actions of $\Int$, however now the notion is available for any countable group (see \cite{OsTi} and \cite{ChKeo}).
\begin{definition}
Let $G$ be a countable group acting on a compact metrizable space $X$. Let $d$ be a compatible metric on $X$. For $\delta>0$ and finite set $S\subseteq G$, a $G$-indexed set $(x_g)_{g\in G}\subseteq X$ is called a \emph{$(\delta,S)$-pseudo-orbit} if for every $g\in G$ and $s\in S$ we have \[d(s\cdot x_g,x_{sg})<\delta.\]

We say that the action has the \emph{shadowing}, or \emph{the pseudo-orbit tracing property}, if for any $\varepsilon>0$ there are $\delta>0$ and finite set $S\subseteq G$ such that for any $(\delta,S)$-pseudo-orbit $(x_g)_{g\in G}\subseteq X$ there exists $x\in X$ whose orbit $\varepsilon$-traces the pseudo-orbit, i.e. for every $g\in G$ \[d(x_g,g\cdot x)<\varepsilon.\]
\end{definition}
It is straightforward to verify that shadowing does not depend on the choice of the compatible metric and moreover that if $G$ is finitely generated then the finite set $S$ from the definition can be always taken to be some fixed finite generating set of $G$.

The following definition has its origin in \cite{GoMe} where it turned out to be crucial for describing inverse limits with shadowing for actions of $\Int$. The version for general countable groups appeared in \cite{LiChZh}.

\begin{definition}
Let $G$ be a countable group. Let $(X_n)_n$ be an inverse system of subshifts over $G$, where the bonding maps $(\phi_n^m: X_n\rightarrow X_m)_{n\geq m\in\Nat}$ are not necessarily onto, however for every $n_0\in\Nat$ there exists $n\in\Nat$ such that for every $m\geq n$ we have $\phi_m^{n_0}[X_m]=\phi_n^{n_0}[X_n]$. Then we say that the inverse system satisfies the \emph{Mittag-Leffler condition}.
\end{definition}

The main results of \cite{GoMe} and \cite{LiChZh} characterize actions with the shadowing property of some fixed group $G$ on the Cantor space as precisely those that are conjugate to inverse limits of inverse systems of subshifts of finite type over $G$ satisfying the Mittag-Leffler property. They will be applied in the following theorem.

\begin{theorem}\label{thm:genericshadowing}
Let $G$ be a finitely generated group with the strong topological Rokhlin property. Then the generic action of $G$ on the Cantor space has shadowing. In particular, shadowing is generic in $\Act_G(\CC)$.
\end{theorem}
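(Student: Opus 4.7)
The strategy is to exploit the characterization of shadowing on the Cantor space as topological conjugacy to an inverse limit of subshifts of finite type with bonding maps satisfying the Mittag--Leffler condition (Good--Meddaugh for $\Int$ and Li--Chen--Zhu for general countable groups). Since shadowing is a topological conjugacy invariant and STRP provides a comeager conjugacy class $C\subseteq\Act_G(\CC)$, it suffices to verify that a single $\alpha_0\in C$ has shadowing; then $C$ is contained in the set of actions with shadowing, and comeagerness of $C$ immediately yields the genericity claim.

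Fix such an $\alpha_0$ and a compatible metric on $\CC$. I would inductively build a refining sequence of clopen partitions $\PP_1\preceq\PP_2\preceq\cdots$ with $\mathrm{mesh}(\PP_n)\to 0$, such that each $X_n:=\QQ(\alpha_0,\PP_n)$ is of finite type. The map $z\mapsto(Q^{\alpha_0}_{\PP_n}(z))_n$ then identifies $\CC$ with $\varprojlim X_n$ and conjugates $\alpha_0$ to the inverse limit of the shift actions on the $X_n$; because the coding maps $Q^{\alpha_0}_{\PP_n}$ are surjective, so are the bonding maps $X_{n+1}\to X_n$, and the Mittag--Leffler condition holds trivially. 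The Li--Chen--Zhu characterization then gives shadowing of $\alpha_0$.

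The inductive step proceeds as follows: given $\PP_n$ with $X_n$ of finite type, pick an auxiliary refinement $\PP'_{n+1}\succeq\PP_n$ of small mesh. Using density of SFTs in $\SH_G(\PP'_{n+1})$ (Lemma~\ref{lem:SFTnbhrds}) choose an SFT $Y'$ close to $\QQ(\alpha_0,\PP'_{n+1})$ still projecting onto $X_n$ via the map induced by $\PP'_{n+1}\to\PP_n$. Proposition~\ref{prop:Qmap-nbhds}\eqref{it3-Qmap-nbhds} then yields an action $\beta$ in any preassigned small $\PP_n$-neighborhood of $\alpha_0$ with $\QQ(\beta,\PP'_{n+1})=Y'$. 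Comeagerness of $C$ lets us pick $\beta\in C$; writing $\beta=\phi\alpha_0\phi^{-1}$ with $\phi\in\Homeo(\CC)$, we set $\PP_{n+1}:=\phi^{-1}\PP'_{n+1}$, obtaining $\QQ(\alpha_0,\PP_{n+1})=Y'$, an SFT.

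The main obstacle is that the pulled-back partition $\PP_{n+1}=\phi^{-1}\PP'_{n+1}$ refines $\phi^{-1}\PP_n$ rather than $\PP_n$ itself, so $\phi$ must be constrained to setwise stabilize each part of $\PP_n$. This forces the search for $\beta\in C$ into the subgroup $\Homeo(\CC;\PP_n)\leq\Homeo(\CC)$ of part-preserving homeomorphisms, which acts with a potentially much smaller conjugacy orbit than $\Homeo(\CC)$. A Fra\"{\i}ss\'{e}-style back-and-forth in the spirit of \cite{KeRo} and \cite{Kwia} is needed: STRP in the equivalent form of density of projectively isolated subshifts (Theorem~\ref{thm:mainRokhlin}) provides enough flexibility within each stabilizer to simultaneously produce the required $\beta$, maintain the refinement relation, and control mesh. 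Finite generation of $G$ enters essentially through the Li--Chen--Zhu characterization, whose pseudo-orbit condition is formulated with a fixed finite generating set.
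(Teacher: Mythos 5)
Your high-level strategy --- reduce to showing that one action in the comeager conjugacy class has shadowing, then verify this via the Li--Chen--Zhu characterization by inverse limits of SFTs satisfying the Mittag--Leffler condition --- coincides with the paper's. The execution, however, has a genuine gap at exactly the point you flag as the ``main obstacle,'' and the gap is not repaired. You want every coordinate factor $\QQ(\alpha_0,\PP_n)$ to be literally an SFT with surjective bonding maps, and this forces you to find a conjugate $\beta=\phi\alpha_0\phi^{-1}$ with $\phi$ setwise stabilizing $\PP_n$. Comeagerness (hence density) of the conjugacy class under all of $\Homeo(\CC)$ says nothing about the orbit under the clopen subgroup of partition-preserving homeomorphisms; obtaining that kind of local transitivity is precisely the hard content of the STRP (it is what Proposition~\ref{prop:BYMeTs} packages), and your appeal to an unspecified Fra\"{\i}ss\'e-style back-and-forth does not supply it. There is a second unjustified step in the induction: an SFT $Y'$ produced by Lemma~\ref{lem:SFTnbhrds} near $\QQ(\alpha_0,\PP'_{n+1})$ \emph{contains} it, so its image under the map induced by $\PP'_{n+1}\to\PP_n$ may strictly contain $X_n$, whereas you need exact equality for the tower to have inverse limit $\alpha_0$. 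Nothing in your setup forces equality --- that is exactly what projective isolation is designed to guarantee, and it is in tension with your insistence that $X_n$ itself be an SFT.

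The paper sidesteps both problems by \emph{not} requiring the $\QQ(\alpha,\PP_n)$ to be of finite type. Proposition~\ref{prop:inverselimitofprojiso} shows by a direct Baire-category argument that the set of actions which are inverse limits of projectively isolated subshifts along isolated factor maps is a dense $G_\delta$, conjugation-invariant set, hence contains the generic $\alpha$; the partitions are chosen for $\alpha$ itself and no partition-preserving conjugation is ever needed. The resulting coordinate subshifts $X_n$ are merely sofic, but each is then enlarged to an SFT $Y_n\supseteq X_n$ inside the neighborhood on which the isolated factor map is defined, so that $\phi_n^{n-1}[Y_n]=X_{n-1}$. The bonding maps of the system $(Y_n)_n$ are no longer surjective, but their images stabilize immediately, which is all the Mittag--Leffler condition and the Li--Chen--Zhu theorem require. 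To salvage your route you would essentially have to prove the stabilizer-transitivity statement from scratch, i.e.\ redo the second half of the proof of Theorem~\ref{thm:mainRokhlin}, and additionally show that the generic action admits a refining sequence of partitions all of whose factors are SFTs with the correct surjections --- a strictly stronger structural claim than the one the paper actually uses.
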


Before proceding to the proof we shall need the following proposition.
\begin{proposition}\label{prop:inverselimitofprojiso}
Let $G$ be a countable group admitting a generic action $\alpha\in\Act_G(\CC)$. Then $\alpha$ is an inverse limit of projectively isolated subshifts with isolated factor maps as the bonding maps.
\end{proposition}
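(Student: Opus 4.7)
The plan is to construct, for the given generic $\alpha$, a sequence of clopen partitions $\PP_1,\PP_2,\ldots$ of $\CC$ with $\PP_{n+1}$ refining $\PP_n$, whose join generates the topology of $\CC$, such that $X_n:=\QQ(\alpha,\PP_n)$ is projectively isolated for every $n$ and each bonding map $X_{n+1}\twoheadrightarrow X_n$ (furnished by Proposition~\ref{prop:Qmap-nbhds}\eqref{it2-Qmap-nbhds}) is an isolated factor map in the sense of Definition~\ref{def:projisolated}. Granted such partitions, the diagonal map $x\mapsto(Q^\alpha_{\PP_n}(x))_n$ is $G$-equivariant, continuous, and injective (the latter since the $\PP_n$ jointly separate points), and realizes $\alpha$ as $\varprojlim X_n$.

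Fix a countable clopen base $(U_n)_{n\in\Nat}$ of $\CC$ and perform the construction inductively. Suppose $\PP_n$ is built with $X_n$ projectively isolated. Pick any clopen partition $\PP^0$ refining $\PP_n\vee\{U_{n+1},\CC\setminus U_{n+1}\}$ and set $Y^0:=\QQ(\alpha,\PP^0)$. For any finite symmetric $F\subseteq G$ containing $1_G$, Proposition~\ref{prop:Qmap-nbhds}\eqref{it1-Qmap-nbhds} gives $\QQ(\cdot,\PP^0)[\NN_\alpha^{F,\PP^0}]=\NN_{Y^0}^F$. By density of projectively isolated subshifts in $\SH_G(\PP^0)$, guaranteed by Theorem~\ref{thm:mainRokhlin} under the STRP hypothesis, I select a projectively isolated $Y^1\in\NN_{Y^0}^F$; enlarging $F$ if necessary, I arrange that $Y^1$ also lies inside the open witness neighborhood certifying the projective isolation of $X_n$, so that the refinement-induced map $Y^1\to X_n$ is itself an isolated factor map. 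By Proposition~\ref{prop:Qmap-nbhds}\eqref{it3-Qmap-nbhds}, there is $\beta\in\NN_\alpha^{F,\PP^0}$ with $\QQ(\beta,\PP^0)=Y^1$. Since the conjugacy class of $\alpha$ is dense in $\Act_G(\CC)$, one can find $\phi\in\Homeo(\CC)$ with $\phi\alpha\phi^{-1}$ arbitrarily close to $\beta$; setting $\PP_{n+1}:=\phi^{-1}\PP^0$ gives $\QQ(\alpha,\PP_{n+1})=\QQ(\phi\alpha\phi^{-1},\PP^0)$ arbitrarily close to $Y^1$ in $\SH_G(\PP^0)$. Taking the neighborhood small enough places $X_{n+1}:=\QQ(\alpha,\PP_{n+1})$ inside the open witness neighborhood of $Y^1$, so that $X_{n+1}$ is projectively isolated and its bonding map to $X_n$ is isolated.

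The main obstacle is the transport step just described: density of conjugates delivers only approximation, not exact realization, of the target projectively isolated subshift as $\QQ(\alpha,\PP_{n+1})$. The key observation resolving this is that the two properties I must propagate --- projective isolation of $X_{n+1}$ and isolatedness of the bonding map $X_{n+1}\to X_n$ --- are \emph{open} conditions in $\SH_G(\PP^0)$ at $Y^1$, being defined via the witness open neighborhoods of Definition~\ref{def:projisolated}; hence closeness already suffices. A secondary bookkeeping issue is ensuring that $\phi^{-1}\PP^0$ still refines $\PP_n\vee\{U_{n+1},\CC\setminus U_{n+1}\}$; this is arranged by restricting $\phi$ to homeomorphisms permuting this coarser clopen partition setwise, which is possible because the basic open neighborhood of $\alpha$ used to locate the conjugate can be taken to already encode the coarse data, so every sufficiently close conjugate of $\alpha$ is of the required form.
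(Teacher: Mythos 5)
Your overall frame --- realizing $\alpha$ as $\varprojlim\,\QQ(\alpha,\PP_n)$ for a nested sequence of clopen partitions separating points, with the diagonal map giving the conjugacy --- agrees with the paper's, but the inductive step that is supposed to produce $\PP_{n+1}$ has a genuine gap, and it sits exactly where you place your ``key observation''. Projective isolation is \emph{not} an open condition, and the claim that closeness of $X_{n+1}$ to $Y^1$ suffices misreads Definition~\ref{def:projisolated}: the witness neighborhood there is a neighborhood of the \emph{covering} subshift $Y$ (over a possibly different alphabet $B$), and membership in it certifies that a subshift maps onto the \emph{fixed} target $X$; it is not a neighborhood of $X$ itself and confers nothing on subshifts merely Hausdorff-close to $X$. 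Indeed projectively isolated subshifts are all sofic, hence countable in each $\SH_G(A)$, so they cannot fill a non-empty open set unless isolated subshifts are dense --- a hypothesis you do not have. Consequently, from $\QQ(\alpha,\PP_{n+1})$ lying near $Y^1$ you cannot conclude that it is projectively isolated, nor (as your induction needs at the next stage) that it comes equipped with a witness living over the alphabet $\PP^0$ with the refinement map as factor map. The condition that \emph{is} open, and that the paper isolates, is weaker: that $\QQ(\alpha,\PP_{n+1})$ lies in a clopen neighborhood on which the refinement map to level $n$ is constant; this makes the bonding map an isolated factor map and thereby certifies projective isolation of $X_n$ (not of $X_{n+1}$).

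The transport step is also unjustified. Finding $\phi$ with $\phi\alpha\phi^{-1}$ close to $\beta$ only uses density of the conjugacy class, and the closeness of $\phi\alpha\phi^{-1}$ to $\beta$ is a constraint on the conjugated \emph{action} relative to $\PP^0$, not on the homeomorphism $\phi$; so ``every sufficiently close conjugate of $\alpha$ is of the required form'' is false (e.g.\ postcomposing $\phi$ with anything in the centralizer of $\phi\alpha\phi^{-1}$ changes $\phi$ arbitrarily without moving the action). Producing a $\phi$ that simultaneously respects the coarse partition and conjugates $\alpha$ into a prescribed small neighborhood is essentially the Rosendal criterion (Proposition~\ref{prop:BYMeTs}) and requires comeagerness of the conjugacy class, not mere density; without the partition constraint, $\phi^{-1}\PP^0$ need not refine $\PP_n$ and the bonding maps disappear. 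The paper sidesteps both problems at once: it defines a conjugacy-invariant set $\GG$ by quantifying, over all partitions $\PP$, the existence of a refinement $\PP'$ and a clopen neighborhood $\NN_X^F$ on which $\phi_{\PP'}^{\PP}$ is constant with $\QQ(\beta,\PP')\in\NN_X^F$ --- an open condition in $\beta$ by Proposition~\ref{prop:Qcontinuity} --- shows $\GG$ is a dense $G_\delta$ (density via an auxiliary inverse limit of projectively isolated subshifts placed, up to conjugacy, in any given basic open set), and then intersects $\GG$ with the comeager conjugacy class of $\alpha$. You would need to restructure your induction along these lines for the argument to go through.
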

\begin{proof}
Let $G$ and $\alpha$ be as in the statement. By Theorem~\ref{thm:mainRokhlin}, projectively isolated subshifts are dense in $\SH_G(n)$, for all $n\geq 2$. Denote now by $\mathbb{P}$ the countable set of all clopen partitions of $\CC$ and set \[\begin{split}\GG:=\big\{& \beta\in\Act_G(\CC)\colon \forall \PP\in\mathbb{P}\;\exists \PP'\in\mathbb{P}\;\exists X\in\SH_G(\PP') \exists F\subseteq_\mathrm{fin} G\\ & \big(\PP'\preceq \PP\wedge \forall Z,Z'\in\NN_X^F(\phi_{\PP'}^\PP[Z]=\phi_{\PP'}^\PP[Z'])\wedge\QQ(\beta,\PP')\in\NN_X^F\big) \big\}.\end{split}\] We claim that $\GG$ is a dense $G_\delta$ set consisting of (all the) actions from $\Act_G(\CC)$ that are inverse limits of projectively isolated subshifts. To check that $\GG$ is $G_\delta$, it suffices to show that for fixed partitions $\PP$ and $\PP'$ such that $\PP'\preceq\PP$, for fixed $X\in \SH_G(\PP')$ and its neighborhood $\NN_X^F$ with the property that $\phi_{\PP'}^\PP[Z]=\phi_{\PP'}^\PP[Z']$, for all $Z,Z'\in\NN_X^F$ (notice that this last condition does not depend on $\beta$), the set $\{\beta\in\Act_G(\CC)\colon \QQ(\beta,\PP')\in \NN_X^F\}$ is open. This however follows since the map $\QQ(\cdot,\PP')$ is continuous by Proposition~\ref{prop:Qcontinuity}.

Next we show that every $\beta\in\GG$ is an inverse limit of projectively isolated subshifts. In fact, the converse is true as well, i.e. every $\beta\in\Act_G(\CC)$ that is an inverse limit of projectively isolated subshifts belongs to $\GG$. Since we do not need this, it is left to the reader. Fix $\beta\in\GG$ and some compatible metric $d$ on $\CC$, and let $\PP_1$ be an arbitrary clopen partition of $\CC$ whose all elements have diameter less than $1/2$ with respect to $d$. Since $\beta\in\GG_1$ there exists a refinement $\PP_2\preceq\PP_1$ such that $\QQ(\beta,\PP_2)$ and the map $\phi_{\PP_2}^{\PP_1}: \QQ(\beta,\PP_2)\rightarrow \QQ(\beta,\PP_1)$ between the subshifts defined by the inclusion map $\PP_2\to\PP_1$ witnesses that $\QQ(\beta,\PP_1)$ is projectively isolated. By Lemma~\ref{lem:projshiftproperties}, refining $\PP_2$ if necessary, without loss of generality we may assume that all elements of $\PP_2$ have diameter less than $1/2^2$.

We repeat the argument with $\PP_2$ to obtain $\PP_3\preceq \PP_2$, whose elements we may assume have diameter less than $1/2^3$, such that $\QQ(\beta,\PP_3)$ and the map $\phi_{\PP_3}^{\PP_2}: \QQ(\beta,\PP_3)\rightarrow \QQ(\beta,\PP_2)$, defined again by the inclusion $\PP_3\to\PP_2$, witnesses that $\QQ(\beta,\PP_2)$ is projectively isolated. We continue analogously to obtain partitions $\PP_n\preceq\PP_{n-1}$, whose elements have diameter less than $1/2^n$, for all $n\in\Nat$.

We claim that the inverse limit of \[\QQ(\beta,\PP_1)\xleftarrow{\phi_{\PP_2}^{\PP_1}}\QQ(\beta,\PP_2)\xleftarrow{\phi_{\PP_3}^{\PP_2}}\QQ(\beta,\PP_3)\xleftarrow{\phi_{\PP_4}^{\PP_3}}\ldots\] is equal to $\beta$. Indeed, it is clear that the map \[x\in\CC\mapsto \big(Q^\beta_{\PP_n}(x)\big)_{n\in\Nat}\] is a factor map onto the inverse limit. So it suffices to check that it is one-to-one. Pick $x\neq y\in\CC$. Since the diameters of the elements of the partitions $(\PP_n)_{n\in\Nat}$ tend to $0$, there exists $n\in\Nat$ such that $x$ and $y$ lie in different elements of the partition $\PP_n$, so $Q^\beta_{\PP_n}(x)\neq Q^\beta_{\PP_n}(y)$, and therefore the map above is one-to-one.\medskip

Notice that in the previous paragraphs we did not use that projectively isolated subshifts are dense which we will do now when proving that $\GG$ is dense. Let $\UU\subseteq\Act_G(\CC)$ be open and we may suppose it is of the form $\NN_\gamma^{F,\PP}$ for some $\gamma\in\Act_G(\CC)$, finite symmetric $F\subseteq G$ containing $1_G$, and a clopen partition $\PP$. Set $X:=\QQ(\gamma,\PP)$ and recall that by Proposition~\ref{prop:Qmap-nbhds}\eqref{it1-Qmap-nbhds} we have $\QQ(\cdot,\PP)[\NN_\gamma^{F,\PP}]=\NN_X^F$. Since projectively isolated subshifts are dense there is a projectively isolated subshift $X_1\in\NN_X^F$ which is witnessed by some isolated factor map $\phi_2^1:Y\to X$, for some subshift $Y\subseteq B^G$, and some neighborhood $\NN_Y^{F_2}$ such that for all $Y'\in\NN_Y^{F_2}$ we have $\phi_2^1[Y']=X_1$. Pick a projectively isolated subshift $X_2\in\NN_Y^{F_2}$. Continuing analogously, we obtain an inverse sequence $X_1\xleftarrow{\phi_2^1} X_2\xleftarrow{\phi_3^2}X_3\xleftarrow{\phi_4^3}\ldots$ of projectively isolated subshifts whose inverse limit $\lambda$ is, without loss of generality by taking a product with a trivial action on $\CC$, an action on $\CC$. Since by Lemma~\ref{lem:SFTfactorofSFT} and Corollary~\ref{cor:projisolatedconjugacy}, we may assume that the maps $\phi_n^{n-1}$ are induced by maps between the corresponding alphabets $A_n$, resp. $A_{n-1}$, for each $n\in\Nat$ the partition $\Big\{x\in X_n\colon x(1_G)=a\}\colon a\in A_n\Big\}$ induces a partition $\PP_n$ of $\CC$ such that $\PP_{n+1}\preceq\PP_n$, for each $n\in\Nat$, and $\lim_{n\to\infty} \max\{\mathrm{diam}_d(P)\colon P\in\PP_n\}=0$. It is plain to check that $\lambda\in\GG$ then. It is also straightforward to check that a conjugate of $\lambda$ is in $\NN_\gamma^{F,\PP}$ by the same arguments as in the proof of Proposition~\ref{prop:Qmap-nbhds}. Since $\GG$ is clearly conjugacy invariant, it follows that this conjugate is in $\GG$, so since $\UU$ was arbitrary, $\GG$ is dense.

The proof is finished by noticing that the conjugacy class of $\alpha$ must intersect the conjugacy invariant set $\GG$, since both are dense $G_\delta$, thus $\alpha\in\GG$ and we are done.
\end{proof}
\begin{proof}[Proof of Theorem~\ref{thm:genericshadowing}]
Let $G$ be as in the statement and let $\alpha\in\Act_G(\CC)$ be the generic action. By Proposition~\ref{prop:inverselimitofprojiso}, $\alpha$ is an inverse limit of some sequence $(X_n)_{n\in\Nat}$ of projectively isolated subshifts $X_n\subseteq A_n^G$ with respect to isolated factor maps $(\phi_n^m:X_n\to X_m)_{n\geq m\in\Nat}$.\medskip

We define a sequence $(Y_n)_{n\in\Nat}$ of subshifts of finite type. Let $Y_1$ be any subshift of $A_1^G$ that is of finite type and such that $X_1\subseteq Y_1$. Pick now $n\geq 2$. By definition, the isolated factor map $\phi_n^{n-1}: X_n\rightarrow X_{n-1}$ is defined on some neighborhood $\NN_{X_n}^{F_n}$, where $F_n\subseteq G$ is a finite set, such that for every $Y\in\NN_{X_n}^{F_n}$, we have $\phi_n^{n-1}[Y]=X_{n-1}$. Notice that by Lemma~\ref{lem:SFTnbhrds} and the assumption, $\NN_{X_n}^{F_n}$ contains a subshift of finite type $Y_n$ such that $X_n\subseteq Y_n$ and $\phi_n^{n-1}[Y_n]=X_{n-1}$.

We claim that the system $(Y_n)_{n\in\Nat}$ with bonding maps $(\phi_n^m)_{n\geq m\in\Nat}$ is an inverse system satisfying the Mittag-Leffler condition and the inverse limit is equal to $\alpha$. The former is straightforward, we show that the inverse limits of the sequences $(X_n)_n$, resp. $(Y_n)_n$ are equal. Pick $(x_n)_n\in \underset{n\to\infty}{\varprojlim} X_n$. Since for every $n\in\Nat$, $X_n\subseteq Y_n$, clearly $(x_n)_n\in \underset{n\to\infty}{\varprojlim} Y_n$. Conversely, pick $(y_n)_n\in \underset{n\to\infty}{\varprojlim} Y_n$. We claim that for every $n\in\Nat$, $y_n\in X_n$, and thus $(y_n)_n\in \underset{n\to\infty}{\varprojlim} X_n$. Indeed, for any $n\in\Nat$ we have $\phi_{n+1}^n(y_{n+1})=y_n$. Since $y_{n+1}\in Y_{n+1}$ and $\phi_{n+1}^n[Y_{n+1}]=X_n$, we get $y_n\in X_n$.\medskip

Finally, we apply \cite[Theorem 1.2]{LiChZh} with $(Y_n)_n$ to get that $\alpha$ has shadowing.
\end{proof}

As a corollary of Theorems~\ref{thm:genericshadowing} and~\ref{thm:STRPfreeproducts} we get the following.
\begin{corollary}
Let $G=\bigstar_{i\leq n} G_i$, where for $i\leq n$, $G_i$ is finite or cyclic. Then shadowing is generic in $\Act_G(\CC)$.
\end{corollary}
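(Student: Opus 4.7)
The plan is a straightforward two-step application of results already proved in the paper. First, I would invoke Theorem~\ref{thm:STRPfreeproducts}, which asserts that any free product of finitely many finite or cyclic groups has the strong topological Rokhlin property. This directly applies to our $G = \bigstar_{i \leq n} G_i$, giving us the STRP.

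Second, I would invoke Theorem~\ref{thm:genericshadowing}, which states that shadowing is generic in $\Act_G(\CC)$ whenever $G$ is a finitely generated group with the STRP. The only hypothesis to verify here is finite generation of $G$. This is immediate: each $G_i$ is either finite (hence generated by its underlying set) or cyclic (generated by a single element), so each $G_i$ admits a finite generating set $S_i$. Since the index set $\{1, \ldots, n\}$ is finite, the union $S := \bigcup_{i \leq n} S_i$ is a finite generating set for the free product $G$. Combining the two theorems, the conclusion follows.

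The proof has essentially no obstacle: all of the genuine content has already been established, namely the STRP for free products (Section~\ref{sect:freeproducts}) via the coloring automata construction, and the implication STRP $\Rightarrow$ genericity of shadowing (Section~\ref{sect:genericdynamics}) via the inverse-limit description of generic actions together with the Mittag-Leffler characterization of shadowing from \cite{LiChZh}. The corollary itself is just a bookkeeping exercise in chaining these two implications.
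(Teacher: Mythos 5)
Your proposal is correct and is exactly the paper's argument: the corollary is stated there as an immediate consequence of Theorem~\ref{thm:STRPfreeproducts} (STRP for such free products) and Theorem~\ref{thm:genericshadowing} (STRP plus finite generation implies generic shadowing), with finite generation of $G$ being clear since it is a free product of finitely many finitely generated factors. Nothing further is needed.
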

The next step is to prove the converse, i.e. actions with shadowing form a meager set in $\Act_G(\CC)$ for groups $G$ without the strong topological Rokhlin property. First we reformulate the original definition of pseudo-orbit tracing property to an equivalent definition for zero-dimensional spaces that is more convenient for us as it employs clopen partitions instead of compatible metrics.
\begin{lemma}\label{lem:shadowinonCantor}
Let $G$ be a countable group acting continuously on a zero-dimensional compact metrizable space $X$. Then the action has shadowing if and only if for every clopen partition $\PP$ of $X$ there exist a refinement $\PP'\preceq \PP$ and a finite set $S\subseteq G$ so that for every $(x_g)_{g\in G}\subseteq X$ satisfying that for all $g\in G$ and $s\in S$ we have $x_{sg}$ and $s x_g$ lie in the same element of the partition $\PP'$, there is $x\in X$ such that $gx$ and $x_g$ lie in the same element of $\PP$ for every $g\in G$.
\end{lemma}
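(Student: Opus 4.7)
The plan is to show the two directions separately by passing between the metric formulation of shadowing and the clopen partition formulation, using the standard observation that on a compact zero-dimensional space one can freely translate between $\varepsilon$-balls and clopen partitions with small diameters.

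First I would prove the forward implication. Fix a compatible metric $d$ on $X$ and suppose the action has shadowing. Given a clopen partition $\PP$, I would use compactness to find a Lebesgue-type number $\varepsilon>0$ with the property that any subset of $X$ of $d$-diameter less than $\varepsilon$ lies entirely in some element of $\PP$ (this holds because each element of $\PP$ is clopen, hence the minimum over the finite partition of the distance from a point of one element to the complement is strictly positive). By shadowing, pick $\delta>0$ and finite $S\subseteq G$ such that every $(\delta,S)$-pseudo-orbit is $\varepsilon$-traced. Now let $\PP'$ be any clopen partition refining $\PP$ and having $\mathrm{mesh}_d(\PP')<\delta$. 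Any indexed family $(x_g)_{g\in G}$ satisfying the partition condition with respect to $\PP'$ and $S$ is then automatically a $(\delta,S)$-pseudo-orbit, so it is $\varepsilon$-traced by some $x\in X$. By the Lebesgue property, the points $gx$ and $x_g$, being at distance less than $\varepsilon$, then lie in a common element of $\PP$, which is exactly the desired conclusion.

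Next I would prove the backward implication. Fix a compatible metric $d$ and $\varepsilon>0$. Choose a clopen partition $\PP$ with $\mathrm{mesh}_d(\PP)<\varepsilon$ (possible since $X$ is compact and zero-dimensional). By hypothesis, pick a refinement $\PP'\preceq\PP$ and a finite set $S\subseteq G$ as in the statement. Using compactness again, let $\delta>0$ be small enough that any two points of $X$ at distance less than $\delta$ lie in a common element of $\PP'$. For any $(\delta,S)$-pseudo-orbit $(x_g)_{g\in G}$ and any $g\in G,\; s\in S$, the inequality $d(s\cdot x_g, x_{sg})<\delta$ forces $s\cdot x_g$ and $x_{sg}$ to share a cell of $\PP'$. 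The assumption then produces $x\in X$ with $gx$ and $x_g$ in the same cell of $\PP$, hence $d(gx, x_g)<\varepsilon$ for every $g\in G$, verifying shadowing.

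The only mild subtlety — and the point where the zero-dimensionality is essential — is producing, for each prescribed $\delta$, a clopen partition of mesh below $\delta$, and conversely producing a $\delta$ small enough that $d(y,z)<\delta$ implies $y$ and $z$ share a cell of a prescribed clopen partition. Both use compactness together with the fact that finite unions and intersections of clopen sets are clopen, and nothing else; beyond that the proof is a direct translation. I do not anticipate any genuine obstacle here, but I would be careful to note that the $S$ delivered in the partition formulation can be freely taken symmetric and containing $1_G$ without loss of generality, matching the convention used in the rest of Section~\ref{sect:genericdynamics}.
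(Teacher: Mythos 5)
Your proof is correct and is exactly the standard metric-versus-clopen-partition translation that the paper has in mind: the paper states only that ``the straightforward proof is similar to the proof of Lemma~\ref{lem:basicopennbhds} and left to the reader,'' and your argument supplies precisely those omitted details (the Lebesgue-number step for a finite clopen partition and the existence of clopen refinements of arbitrarily small mesh). No gaps.
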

\begin{proof}
The straightforward proof is similar to the proof of Lemma~\ref{lem:basicopennbhds} and left to the reader.
\end{proof}

By definition, every symbolic factor of a subshift of finite type is sofic. Since subshifts of finite type are precisely those subshifts with shadowing (this is easy to check, a formal proof can be found in \cite{ChKeo}), the following proposition is a generalization of this fact - that a factor of an SFT is sofic.
\begin{proposition}\label{prop:shadowingfactorsonsofic}
Let $G$ be a countable group acting continuously on a zero-dimensional compact metrizable space $X$ so that the action has shadowing. Then every symbolic factor of the action is sofic.
\end{proposition}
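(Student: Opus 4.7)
The plan is as follows. Given a continuous $G$-equivariant factor map $\phi: X \to Y$ onto a subshift $Y \subseteq A^G$, first use Lemma~\ref{lem:factoringonshift} to write $\phi = Q^\alpha_\PP$, where $\alpha$ denotes the action on $X$ and $\PP$ is a clopen partition of $X$ identified with (a subset of) $A$. Apply the shadowing hypothesis via Lemma~\ref{lem:shadowinonCantor} with input $\PP$ to obtain a refining clopen partition $\PP' \preceq \PP$ and a finite set $S \subseteq G$, which we may take to be symmetric and to contain $1_G$, such that every $(\PP', S)$-pseudo-orbit is $\PP$-traced by a genuine orbit. Set $Z := \QQ(\alpha, \PP') \subseteq (\PP')^G$ and let $Z' \subseteq (\PP')^G$ be the subshift of finite type with defining window $S$ whose allowed $S$-patterns are exactly $Z_S$; by construction $Z \subseteq Z'$. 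Let $\pi: (\PP')^G \to \PP^G$ be the map induced by the inclusion $\iota: \PP' \hookrightarrow \PP$. It suffices to show $\pi[Z'] = Y$, for then $Y$ is a factor of the SFT $Z'$ and hence sofic by definition.

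The inclusion $\pi[Z'] \supseteq Y$ is immediate: Proposition~\ref{prop:Qmap-nbhds}\eqref{it2-Qmap-nbhds} gives $\pi[Z] = \QQ(\alpha, \PP) = Y$, and $Z \subseteq Z'$. For the reverse inclusion, fix $w \in Z'$. By definition, for every $g \in G$ the pattern $(gw) \upharpoonright S$ is allowed in $Z$, so there is $x_g \in X$ with $Q^\alpha_{\PP'}(x_g) \upharpoonright S = (gw) \upharpoonright S$; unpacking, $\alpha(s^{-1})(x_g) \in w(g^{-1}s)$ for every $s \in S$. Translating by $\alpha(g)^{-1}$ and then reindexing by $g \mapsto g^{-1}$ yields a family $(z_g)_{g \in G} \subseteq X$ satisfying $z_g \in w(g^{-1})$ and, for every $s \in S$ and $g \in G$, both $\alpha(s)(z_g)$ and $z_{sg}$ lying in the common element $w((sg)^{-1})$ of $\PP'$. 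Hence $(z_g)_{g \in G}$ is a $(\PP', S)$-pseudo-orbit in the sense of Lemma~\ref{lem:shadowinonCantor}, and shadowing produces $x \in X$ such that for every $g \in G$, $\alpha(g)(x)$ and $z_g$ lie in the same element of $\PP$, namely $\iota(w(g^{-1}))$. Unwinding the definition of $Q^\alpha_\PP$ then gives $Q^\alpha_\PP(x)(h) = \iota(w(h)) = \pi(w)(h)$ for all $h \in G$, so $\pi(w) = Q^\alpha_\PP(x) \in \QQ(\alpha, \PP) = Y$, as required.

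The only real subtlety is bookkeeping: the pseudo-orbit condition in Lemma~\ref{lem:shadowinonCantor} is phrased with left multiplication ($\alpha(s)(z_g) \sim z_{sg}$), whereas the natural data extracted from $w \in Z'$ first produces a right-multiplication version ($\alpha(s)(y_g) \sim y_{gs^{-1}}$). The reindexing by inversion, together with symmetry of $S$, aligns the two conventions; apart from this verification and the unwinding of $Q^\alpha_{\PP'}$, the argument is purely definitional.
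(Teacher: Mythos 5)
Your proof is correct and follows essentially the same route as the paper's: factor the map as $Q^\alpha_\PP$ via Lemma~\ref{lem:factoringonshift}, invoke the clopen-partition form of shadowing to get $\PP'\preceq\PP$ and $S$, build the SFT over $(\PP')^G$ with window $S$ whose allowed patterns are exactly those realized by points of $X$, and prove the two inclusions exactly as the paper does (the reverse one by extracting a $(\PP',S)$-pseudo-orbit from an arbitrary point of the SFT and tracing it). The only cosmetic differences are that you cite Proposition~\ref{prop:Qmap-nbhds}\eqref{it2-Qmap-nbhds} for the easy inclusion where the paper argues it directly, and your indexing bookkeeping is handled by an explicit reindexing remark rather than the paper's direct verification.
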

\begin{proof}
Fix the group $G$ and the zero-dimensional compact metrizable space $X$ on which $G$ acts continuously with the shadowing property. Denote the action by $\alpha$. Let $\phi:\alpha\rightarrow Y$ be a factor map, where $Y\subseteq A^G$ is a subshift for some non-trivial finite set $A$. We show that $Y$ is sofic. By Lemma~\ref{lem:factoringonshift} there exists a clopen partition $\PP$ of $X$, with $|\PP|=|A|$, so that $\phi=Q_\PP^\alpha$. By the assumption and Lemma~\ref{lem:shadowinonCantor}, there exist a refinement $\PP'\preceq\PP$  and a finite set $S\subseteq G$, which we may assume to be symmetric and containing $1_G$, so that for every $(x_g)_{g\in G}\subseteq X$ satisfying that for all $g\in G$ and $s\in S$ we have $x_{sg}$ and $s x_g$ lie in the same element of the partition $\PP'$, there is $x\in X$ such that $gx$ and $x_g$ lie in the same element of $\PP$ for every $g\in G$.

We define a subshift $Z\subseteq (\PP')^G$ of finite type with defining window $S$ as follows. A pattern $p\in (\PP')^S$ is allowed if and only if there exists $x\in X$ such that for every $s\in S$ and $P\in\PP'$ \[p(s)=P\Leftrightarrow s^{-1}x\in P.\] Let $f_0:\PP'\rightarrow \PP$ be the inclusion map, i.e. $P\subseteq f_0(P)$ for every $P\in\PP'$. It induces a continuous $G$-equivariant map $\psi:Z\rightarrow \PP^G$. We claim that $\psi[Z]=Y$. The proof of the claim will finish the proof of the proposition since $Y$ will then be a factor of a subshift of finite type, therefore sofic.\medskip

Let us first show that $Y\subseteq \psi[Z]$. This is a general argument where the shadowing property is not yet used. Set $\phi':=Q_{\PP'}^\alpha$ and notice that
\begin{itemize}
	\item $\phi=\psi\circ\phi'$,
	\item $\phi'[X]\subseteq Z\subseteq (\PP')^G$ since by the definition of $Z$, we have $(\phi[X])_S=Z_S$.
\end{itemize}

Pick $y\in Y$ and $x\in X$ such that $\phi(x)=y$. For $z:=\phi'(x)\in \phi'[X]\subseteq Z$ we then have \[\psi(z)=\psi\circ\phi'(x)=\phi(x)=y.\]

Finally we show that $\psi[Z]\subseteq Y$. Pick $z\in Z$ and let us show that $\psi(z)\in Y$. For every $g\in G$, $p_g:=gz\upharpoonright S\in (\PP')^S$ is an allowed pattern of $Z$, so by the definition of $p_g$ there exists $x_g\in X$ such that for every $s\in S$ and $P\in\PP'$ \[p(s)=P\Leftrightarrow s^{-1}x_g\in P.\] It follows that $(x_g)_{g\in G}\subseteq X$ has the property that for every $g\in G$ and $s\in S$ we have $x_{sg}$ and $sx_g$ lie in the same element of the partition of $\PP'$. Then, applying the pseudo-orbit tracing property with respect to $\PP'$ and $\PP$, there exists $x\in X$ such that for every $g\in G$ we have that $gx$ and $x_g$ lie in the same element of $\PP$. We claim that $\phi(x)=\psi(z)$. Pick $g\in G$ and let us show that $\phi(x)(g)=\psi(z)(g)$. For $P\in\PP$ we have \[\begin{split}& \phi(x)(g)=P\Leftrightarrow g^{-1}x\in P\Leftrightarrow x_{g^{-1}}\in P\Leftrightarrow p_{g^{-1}}(1_G)\subseteq P\Leftrightarrow\\ & g^{-1}z(1_G)\subseteq P\Leftrightarrow z(g)\subseteq P\Leftrightarrow \psi(z)(g)=P.\end{split}\]
\end{proof}

\begin{proposition}\label{prop:NoSTRPnonsoficfactors}
Let $G$ be a countable group that does not have the strong topological Rokhlin property. Then the set \[\AAA:=\{\alpha\in\Act_G(\CC)\colon \exists \PP\text{ clopen partition }\;(\QQ(\alpha,\PP)\text{ is not sofic})\}\] is non-meager.
\end{proposition}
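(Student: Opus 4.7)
The strategy is to adapt Lemma~\ref{lem:claimlemma} from the proof of Proposition~\ref{prop:noRokhlin} and then exploit the countability of sofic subshifts. Since $G$ lacks the STRP, Theorem~\ref{thm:mainRokhlin} gives some $n\geq 2$ and a non-empty open set $U\subseteq\SH_G(n)$ containing no projectively isolated subshifts. Exactly as in the opening of the proof of Proposition~\ref{prop:noRokhlin}, we may shrink $U$ and reduce the alphabet size so that every $X\in U$ is fully colored, i.e.\ realizes each letter from $\{1,\ldots,n\}$ at the coordinate $1_G$. Fix a clopen partition $\PP=\{P_1,\ldots,P_n\}$ of $\CC$ into non-empty sets, and set $\UU:=\QQ^{-1}(\cdot,\PP)(U)$. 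By Proposition~\ref{prop:Qcontinuity} the map $\QQ(\cdot,\PP)$ is continuous and surjects onto $\NN_{\PP^G}^{\{1_G\}}\supseteq U$, so $\UU$ is a non-empty open subset of $\Act_G(\CC)$.

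The key observation is that the argument proving Lemma~\ref{lem:claimlemma} uses only that the target subshift is not projectively isolated. Therefore for every $X\in U$ the set \[\AAA_X^\PP\ :=\ \{\alpha\in\Act_G(\CC)\colon \QQ(\alpha,\PP)\neq X\}\] is open and dense in $\Act_G(\CC)$, and in particular open and dense in $\UU$. Now invoke countability: a sofic subshift over the alphabet $\{1,\ldots,n\}$ is by definition a factor of an SFT over some finite alphabet $B$, and by the Curtis--Hedlund--Lyndon theorem the factor map is determined by a finite window $F\subseteq G$ and a map $B^F\to\{1,\ldots,n\}$. All the underlying data are finite and $G$ is countable, so the collection of sofic subshifts in $\SH_G(n)$ is countable.

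Setting $\SSS:=\{X\in U\colon X\text{ is sofic}\}$, which is countable, the Baire category theorem gives that \[\GG\ :=\ \UU\cap\bigcap_{X\in\SSS}\AAA_X^\PP\] is comeager in the non-empty open set $\UU$. Any $\alpha\in\GG$ satisfies $\QQ(\alpha,\PP)\in U$ but $\QQ(\alpha,\PP)\neq X$ for every sofic $X\in U$; hence $\QQ(\alpha,\PP)$ lies in $U$ and is not sofic, so $\alpha\in\AAA$. Thus $\GG\subseteq\AAA$ is a comeager subset of a non-empty open subset of $\Act_G(\CC)$, which makes $\AAA$ non-meager, as required.

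There is no real obstacle here; the proof is essentially a streamlined upgrade of the proof of Proposition~\ref{prop:noRokhlin}, where the single subshift $X$ there is replaced by the countable family of sofic subshifts inside $U$. The only point requiring explicit mention is the countability of sofic subshifts over a fixed alphabet, which follows at once from the finite description of an SFT together with a Curtis--Hedlund--Lyndon factor map.
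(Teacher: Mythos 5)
Your proof is correct and follows essentially the same route as the paper: both arguments observe that the density argument in Lemma~\ref{lem:claimlemma} only needs the target subshift to be non--projectively isolated, enumerate the (countably many) sofic subshifts in the bad open set $U$, and apply Baire category inside the open set $\QQ^{-1}(\cdot,\PP)(U)$. The only differences are presentational: you argue directly with the single-partition sets $\AAA_X^\PP$, whereas the paper argues by contradiction using the full sets $A(X_n)$ (intersected over all partitions), which changes nothing of substance.
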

\begin{proof}
Fix $G$ without the strong topological Rokhlin property. By Proposition~\ref{prop:noRokhlin}, there exist a non-trivial finite set $A$ and an open set $\NN\subseteq\SH_G(A)$ that contains no projectively isolated subshift and must be therefore infinite as isolated points are projectively isolated. Applying Lemma~\ref{lem:SFTnbhrds}, without loss of generality, we may assume that $\NN=\NN_X^F$, where $X\subseteq A^G$ is a subshift of finite type, $F\subseteq G$ is a finite subset, and for every $X'\in\NN$ we have $X'\subseteq X$. By Proposition~\ref{prop:Qcontinuity}, there exist $\alpha\in\Act_G(\CC)$ and a clopen partition $\PP$ of $\CC$ such that $\QQ(\alpha,\PP)=X$. Moreover, again by Proposition~\ref{prop:Qcontinuity}, by the continuity of $\QQ(\cdot,\PP)$ there exists an open neighborhood of $\alpha$, which we may suppose to be of the form $\NN_\alpha^{E,\PP'}$, for some finite set $E\subseteq G$ and a clopen partition $\PP'\preceq \PP$ such that $\QQ(\cdot,\PP)[\NN_\alpha^{E,\PP'}]\subseteq \NN_X^F$. In particular, for every $\beta\in\NN_\alpha^{E,\PP'}$ we have $\QQ(\beta,\PP)\subseteq X$.

Let $\{X_n\colon n\in\Nat\}$ be an enumeration of all sofic subshifts inside $\NN$. Notice that the set is indeed infinite since otherwise, as $\NN$ does not contain isolated points, $\NN$ would contain an open subset without any sofic subshift which is a contradiction as subshifts of finite type are dense - recall Lemma~\ref{lem:SFTnbhrds}. Let $m=|A|$ and for each $n\in\Nat$ set \[\begin{split}A_n:= & \{\alpha\in\Act_G(\CC)\colon\\ & \text{for no clopen partition }\RR=\{R_1,\ldots,R_m\}\text{ of }\CC,\; \QQ(\alpha,\RR)=X_n\}.\end{split}\]

By Lemma~\ref{lem:claimlemma}, $A_n$ is dense $G_\delta$. Suppose now that $\AAA$ is meager, so $\Act_G(\CC)\setminus\AAA$ is comeager, and we reach a contradiction. There exists an action \[\gamma\in \NN_\alpha^{E,\PP'}\cap \big(\Act_G\setminus \AAA\big)\cap \bigcap_{n\in\Nat} A_n,\] since the set on the right-hand side is non-meager, so non-empty. Since $\gamma\in\NN_\alpha^{E,\PP'}$ we get that $\QQ(\gamma,\PP)\subseteq X$. Since $\gamma\notin\AAA$ we get that $Y:=\QQ(\gamma,\PP)\subseteq X$ is sofic. However, then $Y=X_n$ for some $n\in\Nat$. Since $\gamma\in A_n$ it follows that $\QQ(\gamma,\PP)\neq X_n$, a contradiction.
\end{proof}

\begin{theorem}
Let $G$ be a countable group that does not have the strong topological Rokhlin property. Then the set \[\SSS:=\{\alpha\in\Act_G(\CC)\colon \alpha\text{ has shadowing}\}\] is dense, but meager. In particular, shadowing is not generic for actions of $G$.
\end{theorem}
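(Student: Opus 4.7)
The plan is to establish the density and meagerness of $\SSS$ separately.

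For density, I would approximate any $\alpha \in \Act_G(\CC)$ inside a prescribed basic neighborhood $\NN_\alpha^{F,\PP}$ by an action conjugate to a shift on a subshift of finite type; since shifts on SFTs have shadowing (see \cite{ChKeo}), such approximants lie in $\SSS$. Fix such a neighborhood, set $X := \QQ(\alpha,\PP)$, and pick via Lemma~\ref{lem:SFTnbhrds} an SFT $X' \in \NN_X^F$. To guarantee that the approximating SFT is itself homeomorphic to $\CC$ (thereby avoiding the auxiliary $\times\CC$ factor used in Proposition~\ref{prop:Qmap-nbhds}\eqref{it3-Qmap-nbhds}), enlarge the alphabet by setting $A := \PP \times \{0,1\}$ with projection $p_0 : A \to \PP$, and let $Y$ be the symbol preimage $\{y \in A^G : p_0 \circ y \in X'\}$. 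Then $Y$ is an SFT, non-empty and perfect (coordinates may always be toggled in the $\{0,1\}$ component), hence a Cantor space; furthermore $p_0[Y]_F = X_F$. For each $q \in X_F$, the clopen cell $D_q := \{y \in Y : p_0 \circ y \upharpoonright F = q\}$ is a non-empty clopen subspace of $Y$, and $R_q := \{x \in \CC : Q^\alpha_\PP(x) \upharpoonright F = q\}$ is a non-empty clopen subspace of $\CC$. Choose a homeomorphism $\psi : Y \to \CC$ sending $D_q$ to $R_q$ for every $q \in X_F$, and set $\beta := \psi \circ \sigma \circ \psi^{-1} \in \Act_G(\CC)$, where $\sigma$ is the shift action on $Y$. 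The same cell-matching computation as in the proof of Proposition~\ref{prop:Qmap-nbhds}\eqref{it3-Qmap-nbhds} verifies $\beta \in \NN_\alpha^{F,\PP}$, and by construction $\beta$ is conjugate to the shift on the SFT $Y$, so $\beta \in \SSS$.

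For the meagerness claim, I would combine the $0$-$1$ law with Propositions~\ref{prop:shadowingfactorsonsofic} and~\ref{prop:NoSTRPnonsoficfactors}. The set $\SSS$ is conjugation-invariant; using Lemma~\ref{lem:shadowinonCantor}, shadowing is expressed by countably many alternations of quantifiers over clopen partitions of $\CC$, finite subsets of $G$, and compact sets of pseudo-orbits, so $\SSS$ is Borel and in particular has the Baire property. By Fact~\ref{fact:0-1-law} it is therefore either meager or comeager. Proposition~\ref{prop:shadowingfactorsonsofic} shows that every $\alpha \in \SSS$ has only sofic symbolic factors, hence $\SSS$ is disjoint from the set $\AAA$ of Proposition~\ref{prop:NoSTRPnonsoficfactors}. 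Were $\SSS$ comeager, then $\AAA \subseteq \SSS^c$ would be meager, contradicting the non-meagerness of $\AAA$ established in Proposition~\ref{prop:NoSTRPnonsoficfactors}. Hence $\SSS$ is meager, and the ``in particular'' clause follows immediately.

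The principal obstacle is the density construction: one must produce, inside the prescribed neighborhood $\NN_\alpha^{F,\PP}$, an action that is simultaneously (a) conjugate to a shift on an SFT (for shadowing), and (b) realized on $\CC$ compatibly with the partition cells $R_q$ indexed by $F$-patterns of $X$. Doubling the alphabet with the $\{0,1\}$ factor turns the candidate $Y$ into a Cantor space at essentially no cost, and the equality $p_0[Y]_F = X_F$ pairs the cells $D_q$ with $R_q$ correctly; the membership $\beta \in \NN_\alpha^{F,\PP}$ then reduces to the same verification already carried out inside the proof of Proposition~\ref{prop:Qmap-nbhds}\eqref{it3-Qmap-nbhds}. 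The meagerness half, once density is in hand, is a clean assembly of the previously proved results.
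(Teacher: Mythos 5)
Your proposal is correct and follows essentially the same route as the paper: density comes from the fact that subshifts of finite type have shadowing together with the density of actions conjugate to shifts on SFTs, and meagerness comes from Fact~\ref{fact:0-1-law} combined with Propositions~\ref{prop:shadowingfactorsonsofic} and~\ref{prop:NoSTRPnonsoficfactors}. The only difference is that you carry out the density half in more explicit detail (the $\{0,1\}$-doubling of the alphabet to make the approximating SFT perfect, hence genuinely homeomorphic to $\CC$), a point the paper's one-line density argument leaves implicit.
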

\begin{proof}
Fix $G$ as in the statement. Density of $\SSS$ follows from the fact that subshifts of finite type have shadowing (see e.g. \cite{ChKeo}) and actions $\alpha\in\Act_G(\CC)$ conjugate to subshifts of finite type are dense by Proposition~\ref{prop:Qcontinuity} and Lemma~\ref{lem:SFTnbhrds}. To get meagerness, applying Fact~\ref{fact:0-1-law}, it suffices to show that $\SSS$ is not comeager. To reach a contradiction, suppose that $\SSS$ is comeager. Then it has a non-empty intersection with the set $\AAA$ from the statement of Proposition~\ref{prop:NoSTRPnonsoficfactors}, so there is $\alpha\in \SSS\cap\AAA$. It follows that there is a clopen partition $\PP$ of $\CC$ such that $\QQ(\alpha,\PP)$ is not sofic. That is however in contradiction with Proposition~\ref{prop:shadowingfactorsonsofic}.
\end{proof}
The following is an immediate corollary (cf. with e.g. \cite{Ko07}).
\begin{corollary}
Generically, Cantor space actions of $\Int^d$, for $d\geq 2$, or more generally of finitely generated nilpotent groups that are not virtually cyclic, do not have shadowing.
\end{corollary}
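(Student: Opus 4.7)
The plan is to combine two already-established results: the characterization of shadowing-genericity via the strong topological Rokhlin property, and the earlier proof that finitely generated infinite nilpotent non-virtually-cyclic groups fail the STRP. Essentially the corollary is just the concatenation of these facts, so the work is to identify the correct inputs rather than to develop new ideas.

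Concretely, I would first note that the case $\Int^d$ for $d\geq 2$ is subsumed by the second clause, since $\Int^d$ is finitely generated, infinite, abelian (hence nilpotent), and evidently not virtually cyclic when $d\geq 2$. Thus it suffices to treat the general case: let $G$ be a finitely generated infinite nilpotent group that is not virtually cyclic. By Corollary~\ref{cor:nilpotentnoSTRP}, $G$ does not have the strong topological Rokhlin property.

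Next, I would invoke the immediately preceding theorem, which asserts that for any countable group $G$ without the STRP the set
\[\SSS=\{\alpha\in\Act_G(\CC)\colon \alpha\text{ has shadowing}\}\]
is meager in $\Act_G(\CC)$. Alternatively, since $\SSS$ is invariant under conjugation by $\Homeo(\CC)$, Fact~\ref{fact:0-1-law} implies it is meager or comeager, and the preceding theorem rules out the comeager alternative. Either way the complement $\Act_G(\CC)\setminus \SSS$ is comeager, which is exactly the assertion that the generic action of $G$ on $\CC$ fails to have shadowing.

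There is no real obstacle here beyond the trivial group-theoretic verifications that $\Int^d$ for $d\geq 2$ satisfies the hypothesis of Corollary~\ref{cor:nilpotentnoSTRP} and that $\SSS$ is conjugation-invariant. The corollary is really just a convenient packaging of Theorem (on the meagerness of shadowing in the absence of the STRP) together with the nilpotent non-example from Section~\ref{sect:noSTRP}, highlighting the contrast with the classical $\Int$-case where shadowing is generic.
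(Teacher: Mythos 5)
Your proposal is correct and matches the paper's intent exactly: the paper states this as an immediate corollary of the preceding theorem (meagerness of shadowing for groups without the STRP) together with Corollary~\ref{cor:nilpotentnoSTRP}, which is precisely the concatenation you describe. The observation that the $\Int^d$ case is subsumed by the nilpotent case is also the right reading.
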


\section{Remarks, problems, and questions}\label{sect:problems}
It is our hope that this paper will stimulate further research in this area, even among researchers working in symbolic dynamics over general groups, and more applications of Theorem~\ref{thm:mainRokhlin} will be found. The following is the most general problem that we state and believe it is worth of attention.
\begin{problem}
Apply Theorem~\ref{thm:mainRokhlin} to a wider class of groups. That is, find more groups for which (strongly) projectively isolated subshifts are dense in the spaces of subshifts.
\end{problem}
We know very little about the permanence properties of the class of countable groups satisfying the STRP. We even do not know whether every virtually cyclic group satisfies the STRP. Inspired by the results from \cite{Cohen} we ask the following.
\begin{question}
Is the class of countable groups satisfying the STRP closed under commensurability? Under virtual isomorphism? Under quasi-isometry?
\end{question}
The following is based on the fact we do not know any projectively isolated subshift that is not strongly projectively isolated.
\begin{question}
Do there exist a group $G$ and a subshift $X\subseteq A^G$, for some non-trivial finite $A$, such that $X$ is projectively isolated, however not strongly projectively isolated?
\end{question}
We conclude with a question related to the results of Section~\ref{sect:noSTRP}.
\begin{question}
Does there exist a countable group that is not finitely generated, yet it still has the STRP?
\end{question}
\bigskip

\noindent{\bf Acknowledgements.} We would like to thank to Sebasti\' an Barbieri for explaining us the different notions of effective subshifts over general countable groups and to Alexander Kechris and Steve Alpern for several comments. We are also grateful to the anonymous referee for many useful comments, especially the suggestion to use Proposition~\ref{prop:BYMeTs} in the proof of Theorem~\ref{thm:mainRokhlin}.
\bibliographystyle{siam}
\bibliography{references-Rokhlin}
\end{document}